\newcommand{\mscomm}[1]{\begingroup #1\endgroup}
\newtheorem{theorem}{Theorem}[section]
\newtheorem{lemma}[theorem]{Lemma}
\newtheorem{corollary}[theorem]{Corollary}
\newtheorem{projection-lemma}[theorem]{Projection Lemma}
\newtheorem{gradient-approximation-lemma}[theorem]{Gradient Approximation Lemma}
\newtheorem{energy-approximation-lemma}[theorem]{Energy Approximation Lemma}
\newtheorem{energy-conservation-principle}[theorem]{Energy Conservation Principle}
\newtheorem{energy-positivity-principle}[theorem]{Energy Positivity Principle}
\newtheorem{derivative-estimation-lemma}[theorem]{Derivative Estimation Lemma}
\newtheorem{Riemann-bilinear-identity}[theorem]{Riemann Bilinear Identity}
\newtheorem{stokes-formula}[theorem]{Stokes' Formula}
\newtheorem{First-Existence-and-Uniqueness-Theorem}[theorem]{First Existence and Uniqueness Theorem}
\newtheorem{Second-Existence-and-Uniqueness-Theorem}[theorem]{Second Existence \& Uniqueness Theorem}
\newtheorem{Convergence-Theorem-for-Abelian-integrals}[theorem]{Convergence Theorem for Abelian Integrals}
\newtheorem{Convergence-Theorem-for-Period-Matrices}[theorem]{Convergence Theorem for Period Matrices}
\newtheorem{variational-principle}[theorem]{Variational Principle}
\newtheorem{conjugate-functions-principle}[theorem]{Conjugate Functions Principle}
\newtheorem{energy-convergence-lemma}[theorem]{Energy Form Convergence Lemma}
\newtheorem{energy-matrix-lemma}[theorem]{Energy Matrix Lemma}
\newtheorem{equicontinuity-lemma}[theorem]{Equicontinuity
Lemma}
\newtheorem{Riemann-Roch-theorem}[theorem]{Riemann--Roch Theorem}
\newtheorem{period-matrix-lemma}[theorem]{Period Matrix Lemma}
\newtheorem{3rd-existence-and-uniqueness-theorem}[theorem]{Third Existence and Uniqueness Theorem}
\newtheorem{4th-existence-and-uniqueness-theorem}[theorem]{Fourth Existence and Uniqueness Theorem}
\newtheorem{restriction-energy-convergence-lemma}[theorem]{Energy Convergence Lemma}
\newtheorem{interpolation-lemma}[theorem]{Interpolation Lemma}
\newtheorem{convergence-multi-valued-discrete-harmonic- functions}
[theorem]{Convergence Theorem for multi-valued discrete harmonic functions}
\theoremstyle{definition}
\newtheorem{example}[theorem]{Example}
\newtheorem{problem}[theorem]{Problem}
\theoremstyle{remark}
\newtheorem{remark}[theorem]{Remark}
\DeclareMathOperator{\Real}{Re}
\DeclareMathOperator{\Imaginary}{Im}
\begin{document}




\title{Discrete Riemann surfaces:\\ linear discretization and its convergence}

\author{Alexander Bobenko and Mikhail Skopenkov}

\date{}
\maketitle




\begin{abstract}
We develop linear discretization of complex analysis, originally
introduced by R. Isaacs, J. Ferrand, R. Duffin, and C. Mercat. We prove 
convergence of discrete period matrices and 
discrete Abelian integrals to their continuous
counterparts. 
We also prove a discrete counterpart of the Riemann--Roch theorem.
The proofs use energy estimates inspired by electrical
networks.
%

\smallskip

\noindent{\bf Keywords}: Discrete analytic function, Riemann surface, Abelian integral, period matrix, Dirichlet energy.

\noindent{\bf 2010 MSC}: 39A12, 65M60, 30F30. 
\end{abstract}



\footnotetext[0]{
The first author was partially supported  by SFB/TR 109 ``Discretization in geometry and dynamics''.  The second author was partially supported by the President of the Russian Federation grant MK-3965.2012.1, 
by ``Dynasty'' foundation, and by the Simons--IUM fellowship.
}

\section{Introduction}\label{sec-intro}

The history of linear discretizations of harmonic and holomorphic
functions can be traced back to the early finite-element
literature~\cite{A1-BOB-CFL_1928, A1-BOB-duffin_1953}. The
discretization of the Dirichlet variational principle leads to linear
discretizations of the Laplace equation and of the Cauchy--Riemann
equations on a square grid. Lelong-Ferrand developed the theory of
discrete harmonic and holomorphic functions on a square grid to a
level that allowed her to prove the Riemann
mapping theorem~\cite{A1-BOB-ferrand_1944, A1-BOB-lelong-ferrand_1955}. Duffin 
took first steps to extend the theory beyond the square grid. He
considered arbitrary triangulations of planar domains and discovered the now famous cotan-weights~\cite{A1-BOB-duffin_1959}.
The cotan-Laplace-operator turned out to be important in discrete differential geometry of surfaces~\cite{MR1246481} and in computer graphics~\cite{MR2047000}.

A striking feature of some $2$-dimensional discrete models in
statistical physics is that they exhibit conformally invariant
properties in the scaling limit:  
for site percolation
on a triangular grid~\cite{A1-BOB-smirnov_2001},
for the random
cluster model~\cite{A1-BOB-smirnov_2010}, 
for
the Ising model~\cite{Chelkak-Smirnov-11}, 
for domino tiling
~\cite{A1-BOB-Kenyon_2000}. In all cases, the linear theory of
discrete analytic functions on regular grids was instrumental. 
  

Mercat generalized the linear theory from planar domains to discrete
Riemann surfaces~\cite{A1-BOB-Mercat_2001} 
and in particular introduced discrete period matrices~\cite{MR2349680}. 
Numerical experiments to compute period matrices for triangulated surfaces and compare them with known period matrices for the smooth surfaces were performed in~\cite{A1-BOB-BobenkoMercatSchmies}. First attempts to prove the convergence were made in~\cite{Mercat-02} but it
remained an open problem; see Remark~\ref{rem-error}.

A very different ``nonlocal'' linear theory for discrete holomorphic functions on triangulated surfaces was introduced by Wilson \cite{Wilson-08}. For this discretization the 
convergence of period matrices to their continuous counterparts is also discussed in \cite{Wilson-08}. 
Yet another linear theory for discrete holomorphic functions on
regular triangle lattices, due to 
Dynnikov--Novikov~\cite{A1-BOB-DynnikovNovikov}, was motivated by the theory
of integrable systems. A discretization of the Riemann--Roch theorem for graphs was introduced by Baker--Norine~\cite{MR2355607}.
 
Important nonlinear discrete models of complex analysis and Riemann
surfaces involve circle packings or, more generally, patterns of
intersecting or disjoint circles~\cite{A1-BOB-thurston_notes, A1-BOB-rodin_sullivan_1987, A1-BOB-Stephenson_Book}. 
The linear theory of discrete holomorphic functions on square grid
can be obtained by linearization from the nonlinear circle pattern
theory. 
Discrete holomorphic functions describe infinitesimal
deformations of circle patterns~\cite{A1-BOB-BobenkoMercatSuris}. 
A strikingly simple concept of discrete conformal equivalence for
triangle meshes leads to a new nonlinear model of discrete Riemann
surfaces~\cite{A1-BOB-SpringbornSchroederPinkall},
\cite{A1-BOB-BobenkoPinkallSpringborn}. 

The problem of convergence of discrete analytic functions to their continuous counterparts is certainly one of the most important issues. Convergence for 
the linear theory 
was proved by Courant--Friedrichs--Lewy \cite{A1-BOB-CFL_1928}
for square lattices, 
by Chelkak--Smirnov \cite{MR2824564}
for rhombic lattices, and
by one of the authors~\cite{A1-Skopenkov} for orthogonal quadrilateral lattices. Although there are some striking convergence
results~\cite{A1-BOB-rodin_sullivan_1987, A1-BOB-schramm_1997,A1-BOB-he_schramm_1996}, see also \cite{A1-BOB-Buecking},
the situation is far from satisfying. 

In this paper we prove convergence of discrete period
matrices (Theorem~\ref{th-main}) and discrete Abelian integrals (Theorem~\ref{th-Convergence of Abelian integrals}) to their continuous counterparts. In particular, we obtain an error estimate~\eqref{eq-error-estimate} for period matrices. 
We also prove a discrete analogue of the Riemann--Roch theorem (Theorem~\ref{Riemann-Roch-theorem}). 

The proofs use energy estimates inspired by electrical networks, which is a highly nontrivial extension of~\cite{A1-Skopenkov} to the case when the curvature is present. Energy estimates allow to prove convergence of discrete period matrices directly, without proving convergence of Abelian integrals first (as in previous approaches). An elementary introduction to our method
can be found in \cite{Dorichenko-etal-11, Skopenkov-etal-12}.

Main results of the paper are stated in Section~\ref{sec-main} and proved in Sections~\ref{sec-period}--\ref{sec-integrals}. 
Discrete Riemann--Roch theorem is stated and proved
in Section~\ref{sec-riemann-roch}. A generalization of our setup, \mscomm{results of numerical experiments}, and some open problems are given in Section~\ref{sec-generalization}. 

\section{Main ideas}\label{sec-main}

\subsection{Discrete harmonic and discrete analytic functions}
\label{ssec:defs}

Let ${{S}}$ be a \emph{polyhedral surface}, i.e., an oriented $2$-dimensional manifold without boundary equipped with a piecewise flat metric having isolated conical singularities. \mscomm{An example of a polyhedral surface is the surface of a polyhedron in 3-dimensional space.}
Let ${{T}}$ be a geodesic triangulation of the polyhedral surface ${{S}}$ such that \mscomm{all faces are flat triangles; in particular,}
all the singular points of the metric are vertices of ${{T}}$.
Denote by ${{T}}^0$, ${{T}}^1$, $\vec{{{T}}}^1$, ${{T}}^2$ the sets of vertices, edges, oriented edges, faces, respectively.
Introduce \emph{cotan edge weights} by the formula
$$
\mscomm{{c}}(e)=\frac{1}{2}\cot\alpha_e+\frac{1}{2}\cot\beta_e,
$$
where $\alpha_e$ and $\beta_e$ are the angles opposite to an edge $e\in {{T}}^1$ in the $2$ triangles sharing 
$e$;
see Figure~\ref{fig:2triangles}.

\begin{figure}[htbp]
\centerline{\small
\definecolor{zzttqq}{rgb}{0.6,0.2,0}
\definecolor{ffqqqq}{rgb}{1,0,0}
\definecolor{qqqqff}{rgb}{0,0,1}
\begin{tikzpicture}[line cap=round,line join=round,>=triangle 45,x=0.5cm,y=0.5cm]
\clip(-3.2,-3.34) rectangle (3.5,6.1);
\draw [shift={(1,-3)},color=zzttqq,fill=zzttqq,fill opacity=0.1] (0,0) -- (63.43:0.6) arc (63.43:135:0.6) -- cycle;
\draw [shift={(2,6)},color=zzttqq,fill=zzttqq,fill opacity=0.1] (0,0) -- (-135:0.6) arc (-135:-78.69:0.6) -- cycle;
\draw [line width=1.2pt,color=qqqqff] (-3,1)-- (3,1);
\draw [line width=1.2pt,color=qqqqff] (3,1)-- (2,6);
\draw [line width=1.2pt,color=qqqqff] (2,6)-- (-3,1);
\draw [line width=1.2pt,color=qqqqff] (3,1)-- (1,-3);
\draw [line width=1.2pt,color=qqqqff] (1,-3)-- (-3,1);
\draw [line width=1.2pt,color=qqqqff] (-3,1)-- (3,1);
\draw (0,3)-- (2,6);
\draw (0.85,4.6) -- (1.15,4.4);
\draw (0,3)-- (3,1);
\draw (1.6,2.15) -- (1.4,1.85);
\draw (0,3)-- (-3,1);
\draw (-1.4,1.85) -- (-1.6,2.15);
\draw (3,1)-- (0,0);
\draw (1.56,0.43) -- (1.5,0.6);
\draw (1.5,0.4) -- (1.44,0.57);
\draw (0,0)-- (-3,1);
\draw (-1.5,0.4) -- (-1.44,0.57);
\draw (-1.56,0.43) -- (-1.5,0.6);
\draw (0,0)-- (1,-3);
\draw (0.57,-1.44) -- (0.4,-1.5);
\draw (0.6,-1.5) -- (0.43,-1.56);
\draw [shift={(1,-3)},color=zzttqq] (63.43:0.6) arc (63.43:135:0.6);
\draw [shift={(1,-3)},color=zzttqq] (63.43:0.5) arc (63.43:135:0.5);
\draw [->,color=qqqqff] (-3,1) -- (3,1);
\fill [color=qqqqff] (-3,1) circle (1.5pt);
\draw[color=qqqqff] (-2.8,0.22) node {$t_e$};
\fill [color=qqqqff] (3,1) circle (1.5pt);
\draw[color=qqqqff] (3.2,0.16) node {$h_e$};
\fill [color=qqqqff] (2,6) circle (1.5pt);
\draw[color=qqqqff] (-0.76,4.12) node {${T}$};
\fill [color=qqqqff] (1,-3) circle (1.5pt);
\fill [color=ffqqqq] (0,3) ++(-1.5pt,0 pt) -- ++(1.5pt,1.5pt)--++(1.5pt,-1.5pt)--++(-1.5pt,-1.5pt)--++(-1.5pt,1.5pt);
\draw[color=ffqqqq] (0.2,2.4) node {$l_e$};
\draw[color=black] (1.82,2.48) node {$Q$};
\fill [color=ffqqqq] (0,0) ++(-1.5pt,0 pt) -- ++(1.5pt,1.5pt)--++(1.5pt,-1.5pt)--++(-1.5pt,-1.5pt)--++(-1.5pt,1.5pt);
\draw[color=ffqqqq] (0.28,0.5) node {$r_e$};
\draw[color=zzttqq] (1.12,-1.82) node {$\beta_e$};
\draw[color=zzttqq] (1.8,4.76) node {$\alpha_e$};
\draw[color=qqqqff] (0.2,1.24) node {$e$};
\end{tikzpicture}
}
\caption{Notation associated with an edge $e\in{{T}}^1$; see Section~\ref{ssec:defs} for explanation.}
\label{fig:2triangles}
\end{figure}
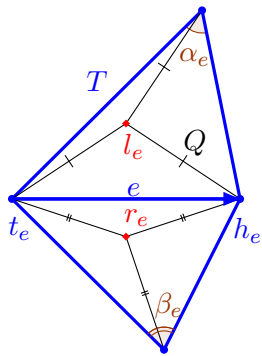

A function $u\colon {{T}}^0\to\mathbb{R}$ is \emph{discrete harmonic}, if for each vertex $z\in {{T}}^0$ we have 
\begin{equation}\label{eq-def-harmonic}
\sum_{xy\in {{T}}^1\,:\,x=z}\mscomm{{c}}(xy)(u(x)-u(y))=0.
\end{equation}


\begin{example}\label{ex-grid} (See Figure~\ref{figure:torus} to the left) Let \mscomm{$\widetilde{{T}}$} be an infinite square grid \mscomm{in the plane} with the SW--NE diagonal drawn in each square. Then the sides of the  squares have unit weights and the weights of diagonals vanish. Thus a function on the vertices of \mscomm{$\widetilde{{T}}$} is discrete harmonic, if and only if the value at a vertex equals the average of the values at the neighbors (diagonal neighbors are not taken into account).
\end{example}






For an oriented 
edge $e\in\vec{{{T}}}^1$ denote by $h_e\in {{T}}^0$, $t_e\in {{T}}^0$, $l_e\in {{T}}^2$, $r_e\in {{T}}^2$ the \emph{head}, the \emph{tail}, the \emph{left shore}, the \emph{right shore} of $e$, respectively; see Figure~\ref{fig:2triangles}. Two functions $u\colon {{T}}^0\to \mathbb{R}$ and $v\colon {{T}}^2\to \mathbb{R}$
are \emph{conjugate}, if for each oriented 
edge $e\in \vec{{{T}}}^1$ we have
\begin{equation}\label{eq-def-analytic2}
v(l_e)-v(r_e)=\mscomm{{c}}(e)(u(h_e)-u(t_e)).
\end{equation}
The pair $f=(u\colon {{T}}^0\to \mathbb{R},v\colon {{T}}^2\to \mathbb{R})$ of two conjugate functions is called a \emph{discrete analytic function}; see an example in Figure~\ref{figure:torus} to the left. We write $\mathrm{Re}f:=u$ and $\mathrm{Im}f:=v$.
We write $f=\mathrm{const}$, if both $u$ and $v$ are constant functions, not necessarily equal to each other. A direct checking shows that on simply-connected surfaces ${{S}}$ discrete harmonic functions are precisely the real parts of discrete analytic functions.

\begin{figure}[htbp]
\begin{tabular}{ccccc}
\definecolor{qqqqff}{rgb}{0,0,1}
\begin{tikzpicture}[line cap=round,line join=round,>=triangle 45,x=3.6cm,y=3.6cm]
\draw [color=black,, xstep=1.8cm,ystep=1.8cm] (-3.83,1.27) grid (-2.33,2.68);
\clip(-3.83,1.27) rectangle (-2.33,2.68);
\draw [domain=-3.83:-2.33] plot(\x,{(-5.5-1*\x)/-1});
\draw [domain=-3.83:-2.33] plot(\x,{(-3-0.5*\x)/-0.5});
\draw [domain=-3.83:-2.33] plot(\x,{(-2.5-0.5*\x)/-0.5});
\draw [domain=-3.83:-2.33] plot(\x,{(-4.5-1*\x)/-1});
\draw [line width=1.6pt,domain=-3.83:-2.33] plot(\x,{(-3.25-0.5*\x)/-0.5});
\draw [domain=-3.83:-2.33] plot(\x,{(-2-0.5*\x)/-0.5});
\draw [->] (-3.42,1.42) -- (-2.92,1.42);
\draw (-2.97,2.44) node[anchor=north west] {$3/2$};
\draw (-2.77,2.24) node[anchor=north west] {$3/2$};
\draw (-3.46,2.44) node[anchor=north west] {$3/2$};
\draw (0,0.07) node[anchor=north west] {$$3/2$$};
\draw (0,0.07) node[anchor=north west] {$$3/2$$};
\draw (-3.27,2.23) node[anchor=north west] {$3/2$};
\draw (-2.97,1.94) node[anchor=north west] {$1/2$};
\draw (0,0.07) node[anchor=north west] {$$3/2$$};
\draw (-2.77,1.73) node[anchor=north west] {$1/2$};
\draw (-3.41,1.93) node[anchor=north west] {$1/2$};
\draw (-3.27,1.72) node[anchor=north west] {$1/2$};
\draw [->] (-3.6,1.6) -- (-3.6,2.1);
\fill [color=qqqqff] (-3,2.5) ++(-1.5pt,0 pt) -- ++(1.5pt,1.5pt)--++(1.5pt,-1.5pt)--++(-1.5pt,-1.5pt)--++(-1.5pt,1.5pt);
\draw[color=qqqqff] (-2.85,2.58) node {$\mathbf{1}$};
\fill [color=qqqqff] (-3.5,2.5) ++(-1.5pt,0 pt) -- ++(1.5pt,1.5pt)--++(1.5pt,-1.5pt)--++(-1.5pt,-1.5pt)--++(-1.5pt,1.5pt);
\draw[color=qqqqff] (-3.35,2.57) node {$\mathbf{0}$};
\fill [color=qqqqff] (-3,2) ++(-1.5pt,0 pt) -- ++(1.5pt,1.5pt)--++(1.5pt,-1.5pt)--++(-1.5pt,-1.5pt)--++(-1.5pt,1.5pt);
\draw[color=qqqqff] (-2.85,2.08) node {$\mathbf{1}$};
\fill [color=qqqqff] (-3.5,1.5) ++(-1.5pt,0 pt) -- ++(1.5pt,1.5pt)--++(1.5pt,-1.5pt)--++(-1.5pt,-1.5pt)--++(-1.5pt,1.5pt);
\draw[color=qqqqff] (-3.34,1.56) node {$\mathbf{0}$};
\fill [color=qqqqff] (-2.5,2) ++(-1.5pt,0 pt) -- ++(1.5pt,1.5pt)--++(1.5pt,-1.5pt)--++(-1.5pt,-1.5pt)--++(-1.5pt,1.5pt);
\draw[color=qqqqff] (-2.36,2.08) node {$\mathbf{2}$};
\fill [color=qqqqff] (-2.5,1.5) ++(-1.5pt,0 pt) -- ++(1.5pt,1.5pt)--++(1.5pt,-1.5pt)--++(-1.5pt,-1.5pt)--++(-1.5pt,1.5pt);
\draw[color=qqqqff] (-2.36,1.57) node {$\mathbf{2}$};
\draw[color=black] (-2.82,1.39) node {$d_{\alpha_1}$};
\fill [color=qqqqff] (-3.5,2) ++(-1.5pt,0 pt) -- ++(1.5pt,1.5pt)--++(1.5pt,-1.5pt)--++(-1.5pt,-1.5pt)--++(-1.5pt,1.5pt);
\draw[color=qqqqff] (-3.34,2.07) node {$\mathbf{0}$};
\fill [color=qqqqff] (-3,1.5) ++(-1.5pt,0 pt) -- ++(1.5pt,1.5pt)--++(1.5pt,-1.5pt)--++(-1.5pt,-1.5pt)--++(-1.5pt,1.5pt);
\draw[color=qqqqff] (-2.85,1.56) node {$\mathbf{1}$};
\fill [color=qqqqff] (-2.5,2.5) ++(-1.5pt,0 pt) -- ++(1.5pt,1.5pt)--++(1.5pt,-1.5pt)--++(-1.5pt,-1.5pt)--++(-1.5pt,1.5pt);
\draw[color=qqqqff] (-2.36,2.57) node {$\mathbf{2}$};
\draw[color=black] (-3.57,2.2) node {$d_{\beta_1}$};
\end{tikzpicture} & 
 & 
\begin{tikzpicture}[line cap=round,line join=round,>=triangle 45,x=3.6cm,y=3.6cm]
\clip(-3.8,1.26) rectangle (-2.73,2.69);
\draw [->] (-3.5,1.5) -- (-3.5,2);
\draw [->] (-3,1.5) -- (-3,2);
\draw [->] (-3.5,1.5) -- (-3,1.5);
\draw [->] (-3.5,2) -- (-3,2);
\draw (-3.5,1.5)-- (-3,2);
\draw[color=black] (-3.59,1.74) node {$\beta_1$};
\draw[color=black] (-2.92,1.75) node {$\beta_1$};
\draw[color=black] (-3.21,1.38) node {$\alpha_1$};
\draw[color=black] (-3.25,2.06) node {$\alpha_1$};
\end{tikzpicture} &
& 
\begin{tikzpicture}[line cap=round,line join=round,>=triangle 45,x=3.6cm,y=3.6cm]
\clip(-3.59,1.27) rectangle (-2.37,2.7);
\draw [rotate around={-0.77:(-3.01,1.75)}] (-3.01,1.75) ellipse (1.85cm and 0.9cm);
\draw [shift={(-3.02,2.16)}] plot[domain=3.94:5.48,variable=\t]({1*0.49*cos(\t r)+0*0.49*sin(\t r)},{0*0.49*cos(\t r)+1*0.49*sin(\t r)});
\draw [shift={(-3.02,1.4)}] plot[domain=0.94:2.21,variable=\t]({1*0.45*cos(\t r)+0*0.45*sin(\t r)},{0*0.45*cos(\t r)+1*0.45*sin(\t r)});
\draw [shift={(-3.18,1.6)}] plot[domain=-0.49:0.42,variable=\t]({1*0.2*cos(\t r)+0*0.2*sin(\t r)},{0*0.2*cos(\t r)+1*0.2*sin(\t r)});
\draw [shift={(-2.98,2.45)}] plot[domain=4.03:5.33,variable=\t]({1*0.84*cos(\t r)+0*0.84*sin(\t r)},{0*0.84*cos(\t r)+1*0.84*sin(\t r)});
\draw[color=black] (-2.96,1.41) node {$\beta_1$};
\draw[color=black] (-2.42,1.73) node {$\alpha_1$};
\end{tikzpicture} 
\\
$\widetilde{{{T}}}$ & 
$\overset{p}{\to}$ & 
${{{T}}}$ &
$\approx $ &
$S^1\times S^1$
\end{tabular}
\caption{The plane (left) is the universal covering of a torus (right).
The triangulation $\widetilde{{{T}}}$ covers the ``triangulation'' ${{T}}$ of the torus. \mscomm{The deck transformations of the covering are translations along the vectors $d_{\alpha_1}$ and $d_{\beta_1}$.}
Numbers represent the values of a discrete analytic function, namely,  the discrete Abelian integral $\phi^1_{{{T}}}$ of the first kind.
\mscomm{Bold} numbers near the vertices are the values of the real part $\mathrm{Re}\,\phi^1_{{{T}}}$. It is a discrete harmonic function. \mscomm{Thin} numbers  inside the faces are the values of the imaginary part $\mathrm{Im}\,\phi^1_{{{T}}}$. 
See Examples~\ref{ex-grid} and~\ref{ex-torus} for the details.}
\label{figure:torus}
\end{figure}
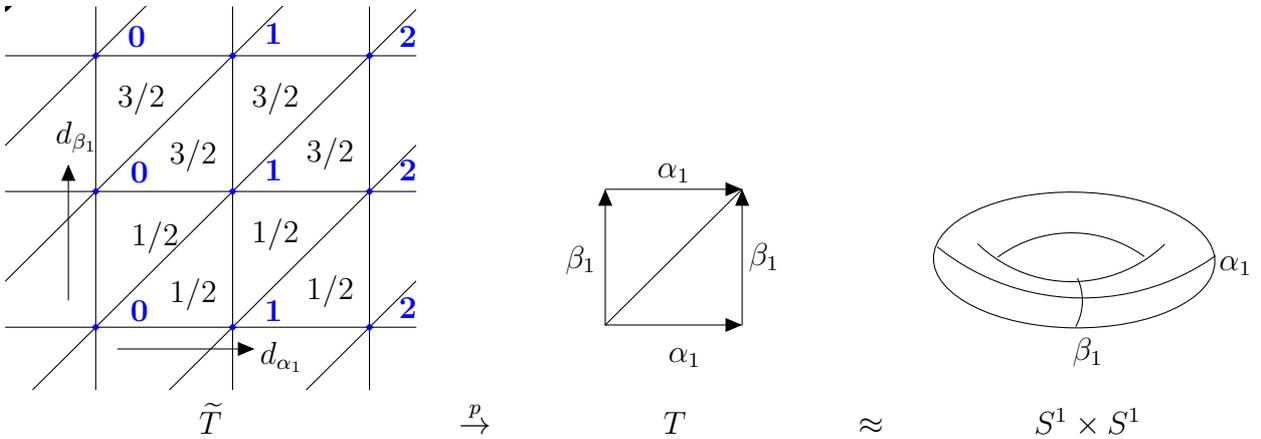

\subsection{Discrete Abelian integrals of the first kind}\label{ssec:abelian}

We are going to consider multi-valued functions on the vertices and the faces of the triangulation ${{T}}$. Informally,  a multi-valued function is the one whose value changes after performing a nontrivial loop on the surface ${{S}}$. 
For formal definition we need the following notions.

In what follows assume that the surface ${{S}}$ is closed and has genus $g\ge 1$. Denote by $p\colon\widetilde{{{S}}}\to {{S}}$ the universal covering of ${{S}}$ and by $p\colon\widetilde{{{T}}}\to {{T}}$ the induced universal covering of ${{T}}$. 
Fix \mscomm{a base point $z_0\in \widetilde{{{S}}}$ and} closed paths
$\alpha_1,\dots,\alpha_{g},\beta_1,\dots,\beta_g\colon [0,1]\to {{S}}$ forming a standard basis of the fundamental group $\pi_1({{S}}\mscomm{,pz_0})$ \mscomm{so that $\alpha_1\beta_1\alpha_1^{-1}\beta_1^{-1}\dots \alpha_g\beta_g\alpha_g^{-1}\beta_g^{-1}$ is null-homotopic.}
Each closed path 
$\alpha\colon [0,1]\to {{S}}$ with $\alpha (0)=\alpha (1)=pz_0$
determines the \emph{deck transformation} $d_{\alpha}\colon \widetilde{{{S}}}\to \widetilde{{{S}}}$, i.e., the homeomorphism such that $p\circ d_{\alpha}=p$ and $d_{\alpha}(\mscomm{z_0})=\widetilde\alpha(1)$,
where $\widetilde\alpha\colon \mathbb{R}\to \widetilde{{{S}}}$ is \mscomm{the} lift of $\alpha\colon [0,1]\to {{S}}$ \mscomm{such that $\widetilde\alpha(0)=z_0$}. The induced deck transformation of $\widetilde{{{T}}}$ is also denoted by $d_{\alpha}\colon \widetilde{{{T}}}\to \widetilde{{{T}}}$.

A \emph{multi-valued function with periods} $A_1,\dots,A_{g},B_1,\dots,B_{g}\in\mathbb{C}$ is a pair of functions $f=(\mathrm{Re}f\colon\widetilde{{{T}}}^0\to\mathbb{R},\mathrm{Im}f\colon\widetilde{{{T}}}^2\to\mathbb{R})$ such that for each $k=1,\dots,g$ and each $z\in \widetilde{{{T}}}^0$, $w\in \widetilde{{{T}}}^2$ we have
\begin{align*}
[\mathrm{Re}f](d_{\alpha_k}z)-[\mathrm{Re}f](z)&=\mathrm{Re}\, A_k; & [\mathrm{Re}f](d_{\beta_k}z)-[\mathrm{Re}f](z)&=\mathrm{Re}\, B_k;\\
[\mathrm{Im}f](d_{\alpha_k}w)-[\mathrm{Im}f](w)&=\mathrm{Im}\, A_k; &
[\mathrm{Im}f](d_{\beta_k}w)-[\mathrm{Im}f](w)&=\mathrm{Im}\, B_k.
\end{align*}
The numbers $A_1,\dots,A_{g}$ and $B_1,\dots,B_{g}$ are called the \emph{A-periods} and the \emph{B-periods} of the multi-valued function $f$, respectively.
A multi-valued discrete analytic function is called a \emph{discrete Abelian integral of the 1st kind} or \emph{discrete holomorphic integral}.

\begin{example}\label{ex-torus} (See Figure~\ref{figure:torus}.) Let ${{S}}$ be the surface obtained by gluing opposite sides of a unit square. 
Let ${{T}}$ be the ``triangulation'' obtained by drawing the SE--NW diagonal of the square. Then $\widetilde{{{S}}}$ is the \mscomm{complex plane $\mathbb{C}$} and $\widetilde{{{T}}}$ \mscomm{is the triangulation from Example~\ref{ex-grid}}.
The deck transformations $d_{\alpha_1},d_{\beta_1}\colon \widetilde{{{S}}}\to\widetilde{{{S}}}$ are unit translations along the sides of the squares. For a face $w\in \widetilde{{{T}}}^2$ denote by $w^*$ 
the circumcenter of $w$.
Then a discrete Abelian integral of the 1st kind with the period $A_1\in\mathbb{C}$ is given by the formulae 
$[\mathrm{Re}f](z):=\mathrm{Re}(A_1 z)$ for each vertex $z\in \widetilde{{{T}}}^0$ and 
$[\mathrm{Im}f](w):=\mathrm{Im}(A_1 w^*)$
for each face $w\in \widetilde{{{T}}}^2$.
Actually these formulae produce discrete Abelian integrals of the 1st kind on any triangulated flat torus ${{S}}=\mathbb{C}/(\mathbb{Z}+\mathbb{Z}\eta)$, where $\eta\in\mathbb{C}-\mathbb{R}$.
\end{example}


\subsection{Discrete period matrices}


To proceed we need the following theorem essentially due to Mercat.
It is proved in Section~\ref{ssec-exist-unique}.

\begin{First-Existence-and-Uniqueness-Theorem}\label{th-fund}
For any numbers $A_1,\dots,A_{g}\in\mathbb{C}$ there exists a discrete Abelian integral of the 1st kind with A-periods $A_1,\dots,A_{g}$.
It is unique up to constant.
\end{First-Existence-and-Uniqueness-Theorem}

For each $l=1,\dots,{g}$ denote by $\phi^l_{{{T}}}=(\mathrm{Re}\,\phi^l_{{{T}}}\colon\widetilde{{{T}}}^0\to\mathbb{R},\mathrm{Im}\,\phi^l_{{{T}}}\colon\widetilde{{{T}}}^2\to\mathbb{R})$ the unique (up to constant) discrete Abelian integral of the 1st kind with A-periods given by the formula $A_k=\delta_{kl}$, where $k=1,\dots,g$.
The $g\times g$ matrix $\Pi_{{T}}$ whose $l$-th column is formed by the B-periods of $\phi^l_{{{T}}}$, where $l=1,\dots,g$, is called the \emph{period matrix} of the triangulation~${{T}}$. 

\begin{example} Let ${{T}}$ be an arbitrary triangulation of a flat torus ${{S}}=\mathbb{C}/(\mathbb{Z}+\mathbb{Z}\eta)$ with $\eta\in\mathbb{C}-\mathbb{R}$. Then 
$\Pi_{{T}}=\eta=\Pi_{{S}}$ by the explicit construction of discrete Abelian integrals in Example~\ref{ex-torus}.
\end{example}

\subsection{Convergence of discrete period matrices}\label{ssec-convergence1}


The polyhedral surface ${{S}}$ has a natural complex structure (see Section~\ref{ssec-energy-convergence-cone}).
Indeed, identify each face $w\in\widetilde{{T}}^2$ with a triangle in the complex plane $\mathbb{C}$ by an orientation-preserving isometry. A function $f\colon \widetilde{{S}}\to \mathbb{C}$ is \emph{analytic}, if it is continuous and its restriction to the interior of each face is analytic.
Given the notion of an analytic function, a \emph{basis of Abelian integrals} $\phi^l_{{{S}}}\colon \widetilde{{{S}}}\to \mathbb{C}$ of the first kind and the \emph{period matrix} $\Pi_{{S}}$ of the surface ${{S}}$ are defined analogously to the discrete case above. 
The \emph{aperture} of a vertex $z\in {{T}}^0$ is the sum of all the face angles meeting at the vertex. Denote by $\gamma_z$ the value $2\pi$ divided by the aperture.
Denote $\gamma_{{S}}:=\min\{1, \min_{z\in {{T}}^0}\gamma_z\}$. Clearly, the value $\gamma_{{S}}$ depends only on the metric of ${{S}}$. 
For a $g\times g$ matrix $\Pi$
denote $\|\Pi\|:=\sqrt{\sum_{1\le k,l\le g}|\Pi_{kl}|^2}$.

\begin{Convergence-Theorem-for-Period-Matrices}\label{th-main} 
For each $\delta>0$  
there are two constants $\mathrm{Const}_{\delta,{{S}}},\mathrm{const}_{\delta,{{S}}}>0$ depending only on $\delta$  and the metric of the surface ${{S}}$ such that for any triangulation ${{T}}$ of ${{S}}$ with the maximal edge length $h<\mathrm{const}_{\delta,{{S}}}$ 
and with the minimal face angle $>\delta$
we have
\begin{equation}\label{eq-error-estimate}
\|\Pi_{{{T}}}-\Pi_{{{S}}}\|
\le
\mathrm{Const}_{\delta,{{S}}}\cdot
\begin{cases}
h, &\text{if }\gamma_{{S}}>1/2;\\
h|\log h|, &\text{if } \gamma_{{S}}=1/2;\\
h^{2\gamma_{{S}}}, &\text{if }\gamma_{{S}}<1/2.
\end{cases}
\end{equation}
\end{Convergence-Theorem-for-Period-Matrices}

In particular, for a sequence of triangulations of the surface ${{S}}$ with the maximal edge length tending to zero 
and with face angles bounded from zero
the discrete period matrices converge to the period matrix $\Pi_{{S}}$. 
The approximation order in~\eqref{eq-error-estimate} 
agrees with numerical experiments; see 
Section~\ref{ssec-num}.
Theorem~\ref{th-main} is proved in Section~\ref{ssec:Convergence of period matrices}.

\subsection{Convergence of discrete Abelian integrals of the first kind}\label{ssec:Convergence of discrete Abelian integrals}

%
To state the next result we need the following notions.
We say that a discrete Abelian integral $\phi^l_{{{T}}}=
(\mathrm{Re}\,\phi^l_{{{T}}}\colon \widetilde{{{T}}}^0\to \mathbb{R},
\mathrm{Im}\,\phi^l_{{{T}}}\colon \widetilde{{{T}}}^2\to \mathbb{R})$
of the first kind is \emph{normalized} at a vertex $z\in \widetilde{{{T}}}^0$ and a face $w\in \widetilde{{{T}}}^2$, if 
$[\mathrm{Re}\,\phi^l_{{{T}}}](z)=[\mathrm{Im}\,\phi^l_{{{T}}}](w)=0$.
Similarly, 
we say that an Abelian integral $\phi^l_{{{S}}}\colon \widetilde{{{S}}}\to \mathbb{C}$ is \emph{normalized} at a point $z\in\widetilde{{{S}}}$, if
$\phi^l_{{{S}}}(z)=0$.

A triangulation ${{T}}$  is \emph{Delaunay}, if for each edge $e\in{{T}}^1$ we have $\alpha_e+\beta_e\le\pi$. 
Any polyhedral surface has an (essentially unique) Delaunay triangulation ~\cite{Bobenko-Springborn-07, A1-BOB-rivin_1994}.
Let $\{{{T}}_n\}$ be a sequence of triangulations of the surface ${{S}}$ such that \mscomm{all the faces are flat triangles}
(the piecewise-flat metric on the surface ${{S}}$ is fixed and does not depend on $n$). 
The sequence of triangulations $\{{{T}}_n\}$ is \emph{nondegenerate uniform}, if there is a constant $\mathrm{Const}$ (not depending on $n$) such that for each member of the sequence:
\begin{itemize}
\item[(A)] the angles of each face are greater than $1/\mathrm{Const}$;
\item[(D)] 
for each edge the sum of opposite angles in the two triangles containing the edge is less than $\pi-1/\mathrm{Const}$ (in particular, the triangulation is Delaunay);
%
\item[(U)] the number of vertices in an arbitrary intrinsic disk of radius equal to the maximal edge length is less than $\mathrm{Const}$.
\end{itemize}

A sequence of pairs of functions
$f_n=(\mathrm{Re}f_n\colon \widetilde{{{T}}}^0_n\to\mathbb{R},
\mathrm{Im}f_n\colon \widetilde{{{T}}}^2_n\to\mathbb{R})$ \emph{converges} to a function $f\colon\widetilde{{{S}}}\to\mathbb{C}$ \emph{uniformly on each compact subset}, if for each compact set $K\subset \widetilde{{{S}}}$ we have 
$$
\max_{z\in K\cap \widetilde{{{T}}}_n^0}|[\mathrm{Re}f_n](z)-\mathrm{Re} f(z)|\to 0 \quad\text{ and }\quad
\max_{xyz\in \widetilde{{{T}}}^2_n:K\cap xyz\ne\emptyset
}
|[\mathrm{Im}f_n](xyz)-\mathrm{Im}f(z)|\to 0 \quad\text{ as }n\to\infty.
$$

\begin{Convergence-Theorem-for-Abelian-integrals}
\label{th-Convergence of Abelian integrals}
Let $\{{{T}}_n\}$ be a nondegenerate uniform sequence of Delaunay triangulations of ${{S}}$ with maximal edge length approaching zero as $n\to\infty$. Let  $z_n\in\widetilde{{{T}}}_n^0$ be a sequence of vertices converging to a point $z_0\in\widetilde{{{S}}}$. Let $w_n\in\widetilde{{{T}}}_n^2$ be a sequence of faces 
\mscomm{with the vertices converging to~$z_0$.}
Then for each $1\le l\le g$ the discrete Abelian integrals $\phi^l_{{{T}}_n}=
(\mathrm{Re}\,\phi^l_{{{T}}_n}\colon \widetilde{{{T}}}_n^0\to \mathbb{R},
\mathrm{Im}\,\phi^l_{{{T}}_n}\colon \widetilde{{{T}}}_n^2\to \mathbb{R})$ of the 1st kind normalized at $z_n$ and $w_n$ converge uni\-formly on each compact set to the Abelian integral $\phi^l_{{{S}}}\colon \widetilde{{{S}}}\to \mathbb{C}$ of the 1st kind normalized at~$z_0$.
\end{Convergence-Theorem-for-Abelian-integrals}

This theorem is proved in Section~\ref{ssec:Convergence of discrete Abelian integrals of the first kind}. 



\subsection{Convergence of energy}

Our results are proved by energy estimates inspired by alternating-current networks. Let us state our main lemma establishing convergence of energy.

\emph{Multi-valued functions} \mscomm{$\widetilde{{{T}}}^0\to\mathbb{R}$}, $\widetilde{{{T}}}^2\to\mathbb{R}$, and $\widetilde{{{S}}}\to\mathbb{R}$ are defined analogously
\mscomm{to the multi-valued functions from Section~\ref{ssec:abelian}.} 
\mscomm{For each multi-valued function $u\colon\widetilde{{{T}}}^0\to\mathbb{R}$ and each edge $xy\in {{T}}^1$ the difference $u(x)-u(y)$ is well-defined.}
The \emph{energy} of the multi-valued function is
$$E_{{T}}(u):=\sum_{xy\in {{T}}^1}\mscomm{{c}}(xy)(u(x)-u(y))^2.$$
For each multi-valued function $u\colon \widetilde{{{S}}}\to\mathbb{R}$ and each point inside a face $w\in{{T}}^2$ the gradient $\nabla u$ is well-defined.
The \emph{energy} of the multi-valued function is
$$E_{{S}}(u):=\sum_{w\in{{T}}^2}\int_w |\nabla u|^2\,dxdy.$$ 

\begin{restriction-energy-convergence-lemma}\label{l-energy-convergence}
For each $\delta>0$ and each smooth multi-valued function $u\colon \widetilde{{{S}}}\to\mathbb{R}$ there are two constants $\mathrm{Const}_{u,\delta,{{S}}},\mathrm{const}_{u,\delta,{{S}}}>0$ 
 such that for any triangulation ${{T}}$ of ${{S}}$ with the maximal edge length $h<\mathrm{const}_{u,\delta,{{S}}}$ 
and with the minimal face angle $>\delta$
we have
\begin{equation*}
|E_{{{T}}}(u\left|_{\widetilde{{{T}}}^0}\right.)- E_{{S}}(u)|
\le
\mathrm{Const}_{u,\delta,{{S}}}\cdot
\begin{cases}
h, &\text{if }\gamma_{{S}}>1/2;\\
h|\log h|, &\text{if } \gamma_{{S}}=1/2;\\
h^{2\gamma_{{S}}}, &\text{if }\gamma_{{S}}<1/2.
\end{cases}
\end{equation*}
\end{restriction-energy-convergence-lemma}

Such approximation order is informally explained as follows. There are two competing main sources of energy approximation error. In the flat regions apart from singular points there is a nice approximation with the error of order $h$. In the $h$-neighborhood of conical singularities there is essentially no approximation, and the error has order of the energy itself, i.e., $h^{2\gamma_{{S}}}$.  Depending on whether $\gamma_{{S}}>1/2$ or $\gamma_{{S}}<1/2$, one of the error sources dominates.

The assumption on the minimal face angle in the lemma cannot be dropped; see Example~\ref{ex-counterexample}. 
The lemma is proved in Section~\ref{ssec-energy-convergence}.

\section{Discrete period matrices}\label{sec-period}

In this section we prove basic properties of discrete period matrices. 

\subsection{Riemann Bilinear Identity}

We start with the following 
result. In what follows fix an arbitrary orientation of each edge 
of~${{{T}}}^1$.

\begin{Riemann-bilinear-identity} \label{th-Riemann2}
Let $u\colon \widetilde{{{T}}}^0\to\mathbb{R}$ and $u'\colon \widetilde{{{T}}}^2\to\mathbb{R}$ be two multi-valued functions with periods
$A_1,\dots,A_{g},B_1,\dots,B_{g}$ and $A'_1,\dots,A'_{g},B'_1,\dots,B'_{g}$, respectively. Then
$$
\sum_{e\in {{{T}}^1}}(u'(l_e)-u'(r_e))(u(h_e)-u(t_e))
= \sum_{k=1}^g (A_k B_{k}'-B_k A_k').
$$
\end{Riemann-bilinear-identity}

The left-hand side of this identity makes sense for an arbitrary cell decomposition~${{T}}$, not necessarily a triangulation and not necessarily equipped with a metric. Such generalization appears in the following example.

\begin{figure}[htb]
\definecolor{ffqqqq}{rgb}{1,0,0}
\definecolor{qqttcc}{rgb}{0,0.2,0.8}
\definecolor{uuuuuu}{rgb}{0.27,0.27,0.27}
\begin{tikzpicture}[line cap=round,line join=round,>=triangle 45,x=0.5cm,y=0.5cm]
\clip(-0.4,-2.5) rectangle (17.25,11.35);
\draw [->] (9.83,2.83) -- (9.83,6.83);
\draw [->] (9.83,6.83) -- (7,9.66);
\draw [->] (3,9.66) -- (0.17,6.83);
\draw [->] (3,0) -- (7,0);
\draw (0.73,2.62) node[anchor=north west] {$\ddots$};
\draw [->] (7,0) -- (9.83,2.83);
\draw [->] (0.17,6.83) -- (0.17,2.83);
\draw (4.38,10.09) node[anchor=north west] {$\ldots$};
\draw (7,0)-- (8,-1);
\draw (9.83,2.83)-- (10.86,1.86);
\draw (9.83,2.83)-- (10.92,2.82);
\draw (9.83,6.83)-- (10.92,6.8);
\draw (8.52,1.23) node[anchor=north west] {$r_{e_{4k-2}}$};
\draw (10,5.42) node[anchor=north west] {$r_{e_{4k-1}}$};
\draw (1.68,5.91) node[anchor=north west] {$l_{e_{4k-2}}=l_{e_{4k-1}}$};
\draw [shift={(10.61,4.92)},dash pattern=on 6pt off 6pt,color=qqttcc]  plot[domain=-1.72:1.75,variable=\t]({1*1.64*cos(\t r)+0*1.64*sin(\t r)},{0*1.64*cos(\t r)+1*1.64*sin(\t r)});
\draw [shift={(8.21,-0.82)},dash pattern=on 6pt off 6pt,color=ffqqqq]  plot[domain=1.19:1.94,variable=\t]({1*6.92*cos(\t r)+0*6.92*sin(\t r)},{0*6.92*cos(\t r)+1*6.92*sin(\t r)});
\draw [shift={(10.08,4.66)},dash pattern=on 6pt off 6pt,color=qqttcc]  plot[domain=3.21:4.31,variable=\t]({1*4.47*cos(\t r)+0*4.47*sin(\t r)},{0*4.47*cos(\t r)+1*4.47*sin(\t r)});
\draw [shift={(8.97,0.88)},dash pattern=on 6pt off 6pt,color=ffqqqq]  plot[domain=-2.54:0.95,variable=\t]({1*1.76*cos(\t r)+0*1.76*sin(\t r)},{0*1.76*cos(\t r)+1*1.76*sin(\t r)});
\draw [->] (8.04,0.68) -- (8.34,0.54);
\draw [->] (10.48,5.72) -- (10.8,5.6);
\draw [->] (10.86,3.3) -- (10.36,3.3);
\draw [->] (10.41,1.91) -- (10,2.31);
\fill [color=black] (3,0) circle (1.5pt);
\fill [color=black] (7,0) circle (1.5pt);
\draw[color=black] (6.36,-0.84) node {$t_{e_{4k-2}}$};
\fill [color=uuuuuu] (9.83,2.83) circle (1.5pt);
\draw[color=uuuuuu] (13.51,2.22) node {$h_{e_{4k-2}}=t_{e_{4k-1}}$};
\fill [color=uuuuuu] (9.83,6.83) circle (1.5pt);
\draw[color=uuuuuu] (10.86,7.62) node {$h_{e_{4k-1}}$};
\fill [color=uuuuuu] (7,9.66) circle (1.5pt);
\fill [color=uuuuuu] (3,9.66) circle (1.5pt);
\fill [color=uuuuuu] (0.17,6.83) circle (1.5pt);
\fill [color=uuuuuu] (0.17,2.83) circle (1.5pt);
\draw[color=black] (8.83,4.6) node {$e_{4k-1}$};
\draw[color=black] (8.02,7.71) node {$e_{4k}$};
\draw[color=black] (5.14,0.42) node {$e_{4k-3}$};
\draw[color=black] (7.8,1.95) node {$e_{4k-2}$};
\draw[color=qqttcc] (13.51,5.19) node {$d_{\widehat{\alpha}_k}$};
\draw[color=ffqqqq] (8.79,6.63) node {$d_{\widehat{\beta}_k}$};
\draw[color=qqttcc] (6.85,2.62) node {$d_{\widehat{\alpha}_k}$};
\draw[color=ffqqqq] (11.67,-0.43) node {$d_{\widehat{\beta}_k}$};
\end{tikzpicture}
\caption{Action of deck transformations on the universal covering of the canonical cell decomposition; see Example~\ref{ex-canonic} for the details.}
\label{fig:canonic}
\end{figure}
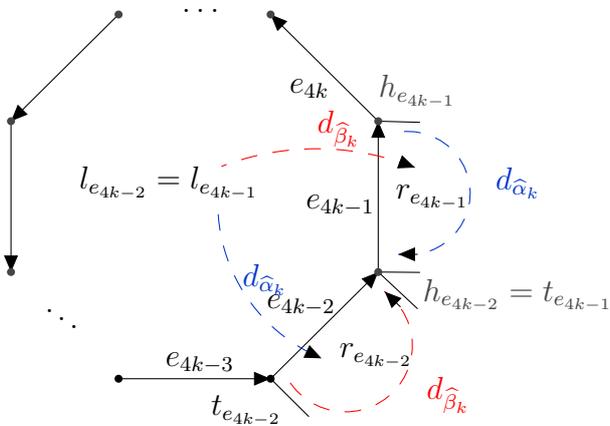

\begin{example}\label{ex-canonic} 
\mscomm{A} \emph{canonical cell decomposition} ${{T}}$ of a surface ${{S}}$ of genus $g$ is obtained from a $4g$-gon with the sides $e_1\dots e_{4g}$ oriented counterclockwise by gluing together the pairs of sides $e_{4k-3}, e_{4k-1}$ and $e_{4k-2}, e_{4k}$ for each $k=1,\dots,g$
with orientation reversal.
The $4g$-gon itself is considered as a face of the universal cover $\widetilde{{{T}}}$; see Figure~\ref{fig:canonic}. 
The paths $\alpha_k:=p e_{4k-3}$ and $\beta_k:=p e_{4k-2}$ for $k=1,\dots,g$ \emph{going along the sides of the $4g$-gon} form a standard basis of the fundamental group of ${{S}}$. 
It is easy to see that there are deck transformations $d_{\widehat{\alpha}_k},d_{\widehat{\beta}_k}\colon \widetilde{{{T}}}\to \widetilde{{{T}}}$ such that
\begin{align*}
d_{\widehat{\alpha}_k}h_{e_{4k-1}}&=t_{e_{4k-1}}; &     
d_{\widehat{\alpha}_k}l_{e_{4k-2}}&=r_{e_{4k-2}};\\
d_{\widehat{\beta}_k}t_{e_{4k-2}}&=h_{e_{4k-2}}; & 
d_{\widehat{\beta}_k}l_{e_{4k-1}}&=r_{e_{4k-1}};
\end{align*}
where $\widehat{\alpha}_k$ and $\widehat{\beta}_k$ are appropriate elements of $\pi_1({{S}},pz_0)$ conjugate to ${\alpha}_k$ and ${\beta}_k$, respectively.
Indeed, if the base point $z_0=h_{e_{4k-1}}$ then we can just take $\widehat{\alpha}_k=\alpha_k$, and change of the base point leads to conjugation in $\pi_1({{S}},pz_0)$.
This implies that 
\begin{align*}
u(t_{e_{4k-1}})-u(h_{e_{4k-1}})&=A_k; &     
u'(r_{e_{4k-2}})-u'(l_{e_{4k-2}})&=A'_k;\\
u(h_{e_{4k-2}})-u(t_{e_{4k-2}})&=B_k; &
u'(r_{e_{4k-1}})-u'(l_{e_{4k-1}})&=B'_k.
\end{align*}
Thus for the canonical cell decomposition Riemann Bilinear Identity~\ref{th-Riemann2}
is checked directly:
\begin{align*}
\sum_{e\in {{{T}}^1}}(u'(l_e)-u'(r_e))(u(h_e)-u(t_e))
&=\sum_{k=1}^{g} (u'(l_{e_{4k-1}})-u'(r_{e_{4k-1}}))(u(h_{e_{4k-1}})-u(t_{e_{4k-1}}))\\
&+\sum_{k=1}^{g} 
(u'(l_{e_{4k-2}})-u'(r_{e_{4k-2}}))(u(h_{e_{4k-2}})-u(t_{e_{4k-2}}))\\
&=\sum_{k=1}^g (A_k B_{k}'-B_k A_k').
\end{align*}
\end{example}

Now we proceed to general triangulations.
Note that there are triangulations which are not subdivisions of the canonical cell decomposition, e.g., a triangulation of a surface of   genus $g>3$ with all vertex degrees $<2g$. To prove the identity for general triangulations we need the following auxiliary assertion.



\begin{stokes-formula}\label{l-Stokes}
Let $\widehat{{{T}}}\subset \widetilde{{{T}}}$ be a triangulated polygon. Then
\begin{equation*}
\sum_{e\in\widehat{{{T}}}^1}
(u'(l_e)-u'(r_e))(u(h_e)-u(t_e))
=
\sum_{e\in(\partial\widehat{{{T}}})^1}
u'(r_e)(u(t_e)-u(h_e)),
\end{equation*}
where each edge $e\in (\partial\widehat{{{T}}})^1$ is oriented so that $l_e\subset\widehat{{{T}}}$.
\end{stokes-formula}

\begin{proof} In the case when the polygon $\widehat{{{T}}}$ is a single face of the triangulation $\widetilde{{{T}}}$ Stokes' formula is checked directly.
Sum such Stokes' formulas over all the faces of the triangulated polygon $\widehat{{{T}}}$.
Each interior edge $e$ appears two times with each of the two possible orientations. Hence it contributes
$2(v(l_e)-v(r_e))(u(h_e)-u(t_e))$ to the left-hand side of the sum, and
$(v(r_e)-v(l_e))(u(t_e)-u(h_e))$ to the right-hand side. Canceling the repeating terms, we get the required formula.
\end{proof}

\begin{proof}[Proof of Riemann Bilinear Identity~\ref{th-Riemann2}]
We are going to transform the triangulation ${{T}}$ without changing the left-hand side of the required identity, and this way reduce the 
identity to Example~\ref{ex-canonic}. An \emph{edge subdivision} is replacement of a pair of adjacent faces $xyz$ and $xyw$ of the triangulation by $4$ new faces $xzv$, $zvy$, $yvw$, $wvx$ sharing a common new vertex $v$. It is well-known that two arbitrary triangulations can be transformed to each other by edge subdivisions and their inversions.

Let us check that  the left-hand side of the required identity is invariant under an edge subdivision 
and arbitrary extension of the functions $u$ and $u'$ to the obtained new vertices and faces.  Indeed, the contributions of all the edges  not contained in the quadrilateral $xzyw$ to the left-hand side are not affected by the edge subdivision. Stokes' Formula~\ref{l-Stokes} immediately implies that the total contributions of the edges  contained in the quadrilateral $xzyw$ are the same before and after the edge subdivision. So the left-hand side is invariant.

Thus without loss of generality we may assume that the triangulation ${{T}}$ is a subdivision of a canonical cell decomposition with the given paths $\alpha_1,\dots,\alpha_g,\beta_1,\dots,\beta_g$ following the sides of the $4g$-gon.
By Stokes' Formula~\ref{l-Stokes} it follows that the left-hand side of required identity is invariant under adjustment of the value of the function $u'$ at one particular face and its images under all possible deck transformations. 
Thus without loss of generality we may assume that $u'$ assigns the same values at all the faces inside the $4g$-gon of Example~\ref{ex-canonic}. Then neither of the edges lying inside the $4g$-gon contributes to the left-hand side of the required identity. This means that the left-hand side equals to that of the canonical cell decomposition. By Example~\ref{ex-canonic} the identity follows.
\end{proof}

\subsection{Energy principles}

We are going to use the following basic properties of energy; cf.~\cite[Principles~2.1, 2.2, 5.6]{A1-Skopenkov}.

\begin{energy-conservation-principle}
\label{th-energy-conservation}
Let $f=(\mathrm{Re}f\colon\widetilde{{{T}}}^0\to\mathbb{R},\mathrm{Im}f\colon\widetilde{{{T}}}^2\to\mathbb{R})$ be a discrete Abelian integral of the 1st kind with periods $A_1,\dots,A_{g},B_1,\dots,B_{g}$. Then $$E_{{T}}(\mathrm{Re}f)=-\Imaginary\sum_{k=1}^g A_k\bar B_{k}.$$
\end{energy-conservation-principle}

\begin{proof} By equation~\eqref{eq-def-analytic2} and the Riemann Bilinear Identity~\ref{th-Riemann2} we have
\begin{align*}
E_{{{T}}}(\mathrm{Re}f)
&=\sum_{e\in {{T}}^1}\mscomm{{c}}(e)(\mathrm{Re}f(h_e)-\mathrm{Re}f(t_e))^2\\
&=\sum_{e\in {{T}}^1}
(\mathrm{Im}f(l_e)-\mathrm{Im}f(r_e))(\mathrm{Re}f(h_e)-\mathrm{Re}f(t_e))\\
&=\sum_{k=1}^g(\mathrm{Re}A_k \mathrm{Im}B_{k} - \mathrm{Re}B_k \mathrm{Im}A_{k})\\
&=-\Imaginary\sum_{k=1}^g A_k\bar B_{k},\\[-1.6cm]
\end{align*}
\end{proof}

\begin{energy-positivity-principle}\label{convexity-principle}
For each nonconstant multi-valued function $u\colon \widetilde{{{T}}}^0\to\mathbb{R}$ we have $E_{{T}}(u)>0$.
\end{energy-positivity-principle}

\begin{proof}  
For $u\ne \mathrm{const}$ using the formula $\cot zxy=\frac{1-\cot xyz\cot yzx}{\cot xyz + \cot yzx}$ we get
\begin{align*}
E_{{{T}}}(u)
&=\sum_{xy\in {{T}}^1}\mscomm{{c}}(xy)(u(x)-u(y))^2\\
&=\frac{1}{2}\sum_{xyz\in {{T}}^2}
\left( \cot xyz (u(x)-u(z))^2 + \cot yzx (u(y)-u(x))^2 + 
\cot zxy (u(z)-u(y))^2\right)\\
&=\frac{1}{2}\sum_{xyz\in {{T}}^2}
\frac{\left(\cot xyz (u(x)-u(z)) + \cot yzx (u(x)-u(y))\right)^2 + (u(z)-u(y))^2}
{\cot xyz + \cot yzx}>0.
\end{align*}
(Another proof is by application of Interpolation Lemma~\ref{l-interpolation-1} below).
\end{proof}




\begin{variational-principle}\label{l-variational-principle5} 
A multi-valued discrete harmonic function has minimal energy
among all multi-valued functions with the same periods.
\end{variational-principle}


\begin{proof}
Consider the finite-dimensional affine space of multi-valued functions $\widetilde{{{T}}}^0\to\mathbb{R}$ with the same periods as a given multi-valued discrete harmonic function $u\colon\widetilde{{{T}}}^0\to\mathbb{R}$.
By Energy Positivity Principle~\ref{convexity-principle} it follows that the energy $E_{{T}}(\cdot)$ is a (nonstrictly) convex functional on this space. Since $u\colon\widetilde{{{T}}}^0\to\mathbb{R}$ 
is discrete harmonic it follows \mscomm{by differentiation} that it is a critical point of the functional $E_{{T}}(\cdot)$. Thus $u$ is a \mscomm{global} minimum.
\end{proof}

If $\mscomm{{c}}(xy)\ne0$ for each edge $xy\in {{T}}^1$ 
then the \emph{energy} of a multi-valued function $v\colon \widetilde{{{T}}}^2\to\mathbb{R}$ is defined by the formula
$E_{{T}}(v):=\sum_{xy\in {{T}}^1}(v(x)-v(y))^2/\mscomm{{c}}(xy)$. 
Identity~\eqref{eq-def-analytic2} immediately implies the following \mscomm{assertion}.

\begin{conjugate-functions-principle}
\label{th-energy-conjugation}
Let $f=(\mathrm{Re}f\colon\widetilde{{{T}}}^0\to\mathbb{R},\mathrm{Im}f\colon\widetilde{{{T}}}^2\to\mathbb{R})$ be a discrete Abelian integral of the 1st kind. Then $E_{{T}}(\mathrm{Re}f)=E_{{T}}(\mathrm{Im}f).$
\end{conjugate-functions-principle}


\subsection{Existence and Uniqueness Theorem}\label{ssec-exist-unique}

\begin{corollary}\label{cor-zero-periods2}
Let $f=(\mathrm{Re}f\colon\widetilde{{{T}}}^0\to\mathbb{R},\mathrm{Im}f\colon\widetilde{{{T}}}^2\to\mathbb{R})$ be a discrete Abelian integral of the 1st kind with the periods $A_1,\dots,A_{g},B_1,\dots,B_{g}$. Then the following $3$ conditions are equivalent:
\begin{enumerate}
 \item $f=\mathrm{const}$;
 \item $A_1=\dots=A_{g}=0$;
 \item $\mathrm{Re}A_1=\dots=\mathrm{Re}A_{g}=
     \mathrm{Re}B_1=\dots=\mathrm{Re}B_{g}=0$.
\end{enumerate}
\end{corollary}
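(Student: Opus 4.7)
The plan is to prove the equivalence via the cycle $(1) \Rightarrow (2) \Rightarrow (3) \Rightarrow (1)$. The implication $(1) \Rightarrow (2)$ is immediate from the definition of periods: a constant function has zero periods. The remaining two implications each use one of the three energy principles just established, together with the defining conjugacy relation~\eqref{eq-def-analytic2}.

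For $(2) \Rightarrow (3)$, assume $A_1 = \dots = A_g = 0$. Plugging this into the Energy Conservation Principle~\ref{th-energy-conservation} gives
\[
E_{{T}}(\mathrm{Re}f) = -\Imaginary\sum_{k=1}^g A_k \bar B_k = 0.
\]
By the Energy Positivity Principle~\ref{convexity-principle}, a multi-valued function with vanishing energy must be constant, so $\mathrm{Re}f \equiv \mathrm{const}$ on $\widetilde{{T}}^0$. But the periods $\mathrm{Re}A_k$ and $\mathrm{Re}B_k$ are by definition the increments of $\mathrm{Re}f$ under the deck transformations $d_{\alpha_k}$ and $d_{\beta_k}$, so they all vanish; this is precisely~(3).

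For $(3) \Rightarrow (1)$, assume $\mathrm{Re}A_k = \mathrm{Re}B_k = 0$ for all $k$. Then the zero function on $\widetilde{{T}}^0$ is a legitimate competitor for $\mathrm{Re}f$ in the affine space of multi-valued functions with real periods $\mathrm{Re}A_k, \mathrm{Re}B_k$. Since $\mathrm{Re}f$ is the real part of a discrete analytic function on the simply connected cover $\widetilde{{T}}$, it is discrete harmonic there, and the Variational Principle~\ref{l-variational-principle5} yields
\[
0 \le E_{{T}}(\mathrm{Re}f) \le E_{{T}}(0) = 0.
\]
Another application of the Energy Positivity Principle~\ref{convexity-principle} forces $\mathrm{Re}f$ to be constant on $\widetilde{{T}}^0$. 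The conjugacy relation~\eqref{eq-def-analytic2} now reads $\mathrm{Im}f(l_e) - \mathrm{Im}f(r_e) = 0$ for every oriented edge $e \in \vec{{T}}^1$, so $\mathrm{Im}f$ takes equal values on any two faces sharing an edge. Since the dual graph of the triangulation $\widetilde{{T}}$ of the connected simply connected surface $\widetilde{{S}}$ is connected, $\mathrm{Im}f$ is constant on $\widetilde{{T}}^2$, so $f = \mathrm{const}$.

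No step here is genuinely hard; the only point to watch is the correct bookkeeping in $(3) \Rightarrow (1)$, namely that $\mathrm{Re}f$ is discrete harmonic on the \emph{universal cover} (where the criterion from Section~\ref{ssec:defs} applies), and that the dual graph of $\widetilde{{T}}$ is connected so that constancy of $\mathrm{Im}f$ on adjacent faces propagates globally.
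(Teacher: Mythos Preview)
Your proof is correct. The paper organizes the argument slightly differently: it proves $1\Rightarrow 2$, $1\Rightarrow 3$ (both trivial), then $2\Rightarrow 1$ via Energy Conservation and Energy Positivity, and states that $3\Rightarrow 1$ is ``analogous''. The analogous argument for $3\Rightarrow 1$ is simply to expand the Energy Conservation formula as $E_{{T}}(\mathrm{Re}f)=\sum_k(\mathrm{Re}A_k\,\mathrm{Im}B_k-\mathrm{Re}B_k\,\mathrm{Im}A_k)=0$, which vanishes directly under hypothesis~(3). Your route to $3\Rightarrow 1$ via the Variational Principle is valid but a small detour: you use variational minimality to bound $E_{{T}}(\mathrm{Re}f)\le E_{{T}}(0)=0$, whereas the Energy Conservation identity already gives the equality $E_{{T}}(\mathrm{Re}f)=0$ in one line. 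Otherwise the two proofs coincide.
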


\begin{proof} The implications 1$\Rightarrow$2 and 1$\Rightarrow$3 are obvious. Let us prove that 2$\Rightarrow$1; the implication 3$\Rightarrow$1 is proved analogously.
By Energy Conservation Principle~\ref{th-energy-conservation} we have
$$E_{{{T}}}(\mathrm{Re}f)=\sum_{k=1}^g(\mathrm{Re}A_k\mathrm{Im} B_{k}-\mathrm{Re}B_{k}\mathrm{Im} A_k)=0.$$
Hence by Energy Positivity Principle~\ref{convexity-principle} we get $\mathrm{Re}f=\mathrm{const}$.
By~\eqref{eq-def-analytic2} 
we get also $\mathrm{Im}f=\mathrm{const}$. Hence $f=\mathrm{const}$.
\end{proof}

\begin{proof}[Proof of First Existence and Uniqueness Theorem~\ref{th-fund}] \emph{Uniqueness}. Let $f,f'$ be two discrete Abelian integrals of the 1st kind as required. Then $f-f'$ is a discrete Abelian integral of the 1st kind with vanishing A-periods. By Corollary~\ref{cor-zero-periods2}(2$\Rightarrow$1) it follows that $f-f'=\mathrm{const}$. 

\emph{Existence.} Denote by $f=(\mathrm{Re}f\colon\widetilde{{{T}}}^0\to\mathbb{R},\mathrm{Im}f\colon\widetilde{{{T}}}^2\to\mathbb{R})$ an unknown multi-valued function. The real and imaginary parts of the A-periods $A_1,\dots,A_{g}$ are considered as parameters.

Consider the following system of linear equations.
\mscomm{Identify ${{T}}^0,{{T}}^1,
{{T}}^2$ with some subsets of $\widetilde{{{T}}}^0,\widetilde{{{T}}}^1,
\widetilde{{{T}}}^2$ which project bijectively to ${{T}}^0,{{T}}^1,
{{T}}^2$, respectively.}  Let the variables be the values of the function $f=(\mathrm{Re}f\colon\widetilde{{{T}}}^0\to\mathbb{R},\mathrm{Im}f\colon\widetilde{{{T}}}^2\to\mathbb{R})$ at the elements of the sets ${{{T}}}^0\subset \widetilde{{{T}}}^0$, ${{{T}}}^2\subset \widetilde{{{T}}}^2$,
and also unknown real and imaginary parts of the B-periods $B_{1}$, \dots, $B_{g}$ of this function. Express the values of the function
$\mathrm{Re}f\colon\widetilde{{{T}}}^0\to\mathbb{R}$ 
at the
vertices 
not belonging to ${{{T}}}^0\subset \widetilde{{{T}}}^0$ through the values at ${{{T}}}^0$ and the periods $A_1,\dots,A_{g}$, $B_{1}, \dots, B_{g}$. Similarly, express the values of the function
$\mathrm{Im}f\colon\widetilde{{{T}}}^2\to\mathbb{R}$
at the
faces 
not belonging to ${{{T}}}^2\subset \widetilde{{{T}}}^2$ through the values at ${{{T}}}^2$ and the periods $A_1,\dots,A_{g}$, $B_{1}$, \dots, $B_{g}$. For each edge $e\in {{T}}^1$ choose an orientation and write one linear equation~\eqref{eq-def-analytic2}. Also pick up a vertex $z\in \widetilde{{{T}}}^0$, a face $w\in\widetilde{{{T}}}^2$, and write two equations $\mathrm{Re}f(z)=0$, $\mathrm{Im}f(w)=0$.

 We have written a system of $|{{T}}^1|+2$ linear equations in $|{{T}}^0|+|{{T}}^2|+2g$ variables, where $|F|$ denotes the number of elements in a finite set $F$. By the Euler formula the number of equations in the system equals the number of variables.
The free terms of these equations are linear combinations of the parameters $\mathrm{Re}A_1,\mathrm{Im}A_1,\dots,\mathrm{Re}A_g,\mathrm{Im}A_{g}$.
By the uniqueness part of the theorem it follows that the homogeneous system obtained for $A_1=\dots=A_{g}=0$ has only the trivial solution. Then by the finite-dimensional Fredholm alternative it follows that for arbitrary $A_1,\dots,A_{g}\in\mathbb{C}$ the inhomogeneous system has at least one solution. Thus there exists a discrete Abelian integral of the 1st kind as required.
\end{proof}

The following theorem is proved analogously using Corollary~\ref{cor-zero-periods2}(3$\Rightarrow$1).

\begin{Second-Existence-and-Uniqueness-Theorem} \label{th-fund3}
For each $P=(A_1,\dots,A_{g},B_1,\dots,B_{g})\in\mathbb{R}^{2g}$
there 
is a unique (up to constant) discrete Abelian integral $\phi_{{{T}},P}=(\mathrm{Re}\,\phi_{{{T}},P}\colon\widetilde{{{T}}}^0\to\mathbb{R},\mathrm{Im}\,\phi_{{{T}},P}\colon\widetilde{{{T}}}^2\to\mathbb{R})$ of the 1st kind 
whose periods have \emph{real parts}  $A_1,\dots,A_{g},B_1,\dots,B_{g}$, respectively.
\end{Second-Existence-and-Uniqueness-Theorem}

\subsection{Discrete period matrices}

For each $l=1,\dots,{g}$ denote by $\phi^l_{{{T}}^*}=(\mathrm{Re}\,\phi^l_{{{T}}^*}\colon\widetilde{{{T}}}^0\to\mathbb{R},\mathrm{Im}\,\phi^l_{{{T}}^*}\colon\widetilde{{{T}}}^2\to\mathbb{R})$ the unique (up to constant) Abelian integral of the 1st kind with A-periods given by the formula $A_k=i\delta_{kl}$, where $k=1,\dots,g$. 
The $g\times g$ matrix $\Pi_{{{T}}^*}$ whose $l$-th column is the B-periods of $\phi^l_{{{T}}^*}$ divided by $i$ is called the \emph{dual period matrix} of the triangulation~${{T}}$.

\begin{example} The map taking the vector of A-periods of a discrete Abelian integral of the first kind to the vector of its B-periods is not necessarily $\mathbb{C}$-linear, and thus $\Pi_{{{T}}^*}\ne\Pi_{{{T}}}$ in general. For instance, if ${{T}}$ is obtained from the side surface of a regular square pyramid by identifying the opposite sides of the base with orientation reversal then
$\Pi_{{{T}}}=2i/\sqrt{3}$ whereas $\Pi_{{{T}}^*}=i\sqrt{3}/2$.
\end{example}

\begin{period-matrix-lemma}\label{l-properties-of-period-matrix2}
The matrices $\mathrm{Im}\,\Pi_{{T}}$ and $\mathrm{Im}\,\Pi_{{{T}}^*}$ are symmetric positively definite, and
$\mathrm{Re}\,\Pi_{{{T}}^*}=(\mathrm{Re}\,\Pi_{{T}})^\mathrm{T}$.
\end{period-matrix-lemma}

\begin{remark} The \emph{discrete period matrix} by Mercat \cite{Mercat-02} is $\Pi_{{{Q}}}:=(\Pi_{{{T}}}+\Pi_{{{T}}^*})/2$. The matrix $\Pi_{{{Q}}}$ is symmetric with positively definite imaginary part. Our proof of Convergence Theorem for Period Matrices~\ref{th-main} implies an analogue of error estimate~\eqref{eq-error-estimate} for this matrix as well:
\begin{equation*}
\|\Pi_{{Q}}-\Pi_{{S}}\|\le \mathrm{Const}_{\delta,{{S}}}\cdot
\begin{cases}
h, &\text{if }\gamma_{{S}}>1/2;\\
h|\log h|, &\text{if } \gamma_{{S}}=1/2;\\
h^{2\gamma_{{S}}}, &\text{if }\gamma_{{S}}<1/2.
\end{cases}
\end{equation*}
\end{remark}

For the proof of the lemma we need the following auxiliary assertion.

\begin{lemma}\label{cl-symmetry} Let 
$
f=(\mathrm{Re}f\colon \widetilde{{{T}}}^0\to\mathbb{R},
\mathrm{Im}f\colon \widetilde{{{T}}}^2\to\mathbb{R})
$
and
$
f'=(\mathrm{Re}f'\colon \widetilde{{{T}}}^0\to\mathbb{R},
\mathrm{Im}f'\colon \widetilde{{{T}}}^2\to\mathbb{R})
$ 
be two discrete Abelian integrals of the 1st kind with the periods
$A_1,\dots,A_{g},B_1,\dots,B_{g}
$
and
$A'_1,\dots,A'_{g}$, $B'_1$, \dots, $B'_{g}$,
respectively. Then
$$
\mathrm{Im}\,\sum_{k=1}^g (A_k B_{k}'-B_k A_k')=0.
$$
\end{lemma}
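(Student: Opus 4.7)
The plan is to apply Riemann Bilinear Identity~\ref{th-Riemann2} twice, in a way that uses the conjugate relation~\eqref{eq-def-analytic2} to turn the pairing into a symmetric bilinear form in $\mathrm{Re}f$ and $\mathrm{Re}f'$, and then exploit the symmetry.

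First, I would unpack periods: since $f$ is a discrete Abelian integral of the 1st kind with periods $A_k,B_k$, the multi-valued function $\mathrm{Re}f\colon \widetilde{{{T}}}^0\to\mathbb{R}$ has periods $\mathrm{Re}A_k,\mathrm{Re}B_k$, while $\mathrm{Im}f\colon \widetilde{{{T}}}^2\to\mathbb{R}$ has periods $\mathrm{Im}A_k,\mathrm{Im}B_k$; similarly for $f'$. Apply Riemann Bilinear Identity~\ref{th-Riemann2} with $u:=\mathrm{Re}f$ and $u':=\mathrm{Im}f'$, and use~\eqref{eq-def-analytic2} for $f'$ to rewrite $\mathrm{Im}f'(l_e)-\mathrm{Im}f'(r_e)=\mscomm{{c}}(e)(\mathrm{Re}f'(h_e)-\mathrm{Re}f'(t_e))$. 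This yields
\begin{equation*}
\sum_{e\in{{T}}^1}\mscomm{{c}}(e)\bigl(\mathrm{Re}f'(h_e)-\mathrm{Re}f'(t_e)\bigr)\bigl(\mathrm{Re}f(h_e)-\mathrm{Re}f(t_e)\bigr)
=\sum_{k=1}^{g}\bigl(\mathrm{Re}A_k\cdot\mathrm{Im}B_k'-\mathrm{Re}B_k\cdot\mathrm{Im}A_k'\bigr).
\end{equation*}

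Next, the same argument with $f$ and $f'$ interchanged (so now $u:=\mathrm{Re}f'$, $u':=\mathrm{Im}f$, applying~\eqref{eq-def-analytic2} for $f$) gives
\begin{equation*}
\sum_{e\in{{T}}^1}\mscomm{{c}}(e)\bigl(\mathrm{Re}f(h_e)-\mathrm{Re}f(t_e)\bigr)\bigl(\mathrm{Re}f'(h_e)-\mathrm{Re}f'(t_e)\bigr)
=\sum_{k=1}^{g}\bigl(\mathrm{Re}A_k'\cdot\mathrm{Im}B_k-\mathrm{Re}B_k'\cdot\mathrm{Im}A_k\bigr).
\end{equation*}
The two left-hand sides are manifestly equal (a symmetric bilinear form), so subtracting the identities yields
\begin{equation*}
\sum_{k=1}^{g}\bigl(\mathrm{Re}A_k\cdot\mathrm{Im}B_k'-\mathrm{Re}B_k\cdot\mathrm{Im}A_k'\bigr)
=\sum_{k=1}^{g}\bigl(\mathrm{Re}A_k'\cdot\mathrm{Im}B_k-\mathrm{Re}B_k'\cdot\mathrm{Im}A_k\bigr).
\end{equation*}

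Finally, expanding $\mathrm{Im}(A_kB_k'-B_kA_k')$ into real and imaginary parts gives exactly the difference of the two sides of the previous equation, hence $\mathrm{Im}\sum_{k=1}^{g}(A_kB_k'-B_kA_k')=0$. The only non-routine step is the cancellation of the left-hand sides, which is the reason for choosing to pair $\mathrm{Re}f$ with $\mathrm{Im}f'$ (and vice versa) rather than other combinations; I expect no serious obstacle beyond careful bookkeeping of real and imaginary parts of periods.
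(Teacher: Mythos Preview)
Your proof is correct and follows essentially the same approach as the paper: apply Riemann Bilinear Identity~\ref{th-Riemann2} twice (with $u=\mathrm{Re}f$, $u'=\mathrm{Im}f'$ and with the roles swapped), then use the conjugate relation~\eqref{eq-def-analytic2} to reduce both edge sums to the same symmetric bilinear form $\sum_{e}\mscomm{{c}}(e)(\mathrm{Re}f(h_e)-\mathrm{Re}f(t_e))(\mathrm{Re}f'(h_e)-\mathrm{Re}f'(t_e))$, which forces the two period expressions to coincide. The paper presents the same computation in reverse order (starting from the expanded imaginary part and showing it equals the difference of two identical sums), but the content is the same.
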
  

\begin{proof} 
Using Riemann Bilinear Identity~\ref{th-Riemann2} and then formula~\eqref{eq-def-analytic2} to cancel repeating terms we get
\begin{align*}
\mathrm{Im}\,\sum_{k=1}^g (A_k B_{k}'-B_k A_k')&=
\sum_{k=1}^g
(\mathrm{Re}A_k\mathrm{Im}B_k'-
\mathrm{Re}B_k\mathrm{Im}A_k'-
\mathrm{Re}A_k'\mathrm{Im}B_k+
\mathrm{Re}B_k'\mathrm{Im}A_k)\\
&=\sum_{e\in{{T}}^1}(\mathrm{Im}f'(l_e)-\mathrm{Im}f'(r_e))
(\mathrm{Re}f(h_e)-\mathrm{Re}f(t_e)) \\
&- \sum_{e\in{{T}}^1}
(\mathrm{Im}f(l_e)-\mathrm{Im}f(r_e))
(\mathrm{Re}f'(h_e)-\mathrm{Re}f'(t_e))\\
&=0.\\[-1.6cm]
\end{align*}
\end{proof}

\begin{proof}[Proof of Period Matrix Lemma~\ref{l-properties-of-period-matrix2}]
Applying Lemma~\ref{cl-symmetry} for $f:=\phi^i_{{{T}}}$, $f':=\phi^j_{{{T}}}$ we get
$
(\mathrm{Im}\Pi_{{{T}}})_{ij}-
(\mathrm{Im}\Pi_{{{T}}})_{ji}=0,
$
that is, $\mathrm{Im}\,\Pi_{{T}}$ is symmetric.
Applying Lemma~\ref{cl-symmetry} for $f:=\phi^i_{{{T}}}$, $f':=\phi^j_{{{T}}^*}$ we get
$
(\mathrm{Re}\Pi_{{{T}}*})_{ij}-
(\mathrm{Re}\Pi_{{{T}}})_{ji}=0
$
that is, $\mathrm{Re}\,\Pi_{{{T}}^*}=(\mathrm{Re}\,\Pi_{{T}})^\mathrm{T}$.

Finally, let $f$ be a discrete Abelian integral of the 1st kind whose A-periods 
$A_1$, \dots, $A_g$ are arbitrary \emph{real} numbers not vanishing simultaneously. Then the B-periods of this integral are equal to $B_l=\sum_{k=1}^g  (\Pi_{{{T}}})_{kl} A_k$. Hence
by Energy Conservation Principle~\ref{th-energy-conservation}
and Energy Positivity Principle~\ref{convexity-principle} it follows that
$$
\sum_{1\le k,l\le g} (\mathrm{Im}\Pi_{{{T}}})_{kl} A_k A_l=
-\sum_{k=1}^g\Imaginary (A_k\bar B_{k})=
E_{{{T}}}(\mathrm{Re}f)>0.
$$
Thus $\mathrm{Im}\Pi_{{{T}}}$ is positively definite. Analogously, $\mathrm{Im}\Pi_{{{T}}^*}$ is symmetric and positively definite.
\end{proof}

Denote $u_{{{T}},P}:=\mathrm{Re}\,\phi_{{{T}},P}$, where $\phi_{{{T}},P}$ is the discrete Abelian integral of the 1st kind defined in Second Existence and Uniqueness Theorem~\ref{th-fund3} for each vector  $P\in\mathbb{R}^{2g}$.

\begin{energy-matrix-lemma}\label{l-relation2} 
The energy $E_{{T}}(u_{{{T}},P})$ is a quadratic form in the vector $P\in\mathbb{R}^{2g}$ with the block matrix
\begin{equation*}
E_{{{T}}}:=
\left(\begin{matrix}
\Real \Pi_{{{T}}^*}(\Imaginary \Pi_{{{T}}^*})^{-1}\Real\Pi_{{T}}+\Imaginary\Pi_{{T}} & -(\Imaginary \Pi_{{{T}}^*})^{-1}\Real\Pi_{{{T}}} \\
-\Real\Pi_{{{T}}^*}(\Imaginary \Pi_{{{T}}^*})^{-1} & (\Imaginary \Pi_{{{T}}^*})^{-1}
\end{matrix}\right).
\end{equation*}
\end{energy-matrix-lemma}

\begin{proof} Denote by 
$A_1,\dots,A_g,B_1\dots,B_g\in\mathbb{C}$ the periods of the discrete Abelian integral $\phi_{{{T}},P}$ of the 1st kind. 
Denote $A:=(A_1,\dots,A_g)$, $B:=(B_1,\dots,B_g)$.
Then $P=(\mathrm{Re}A, \mathrm{Re}B)=(\mathrm{Re}A_1,\dots,\mathrm{Re}A_g,
\mathrm{Re}B_1\dots,\mathrm{Re}B_g)$.
By First Existence and Uniqueness Theorem~\ref{th-fund} we get 
$$\phi_{{{T}},P}=\mathrm{Re}A_1\,\phi^1_{{{T}}}+\dots+\mathrm{Re}A_g\,\phi^g_{{{T}}}+\mathrm{Im}A_1\,\phi^1_{{{T}}^*}+\dots+\mathrm{Im}A_g\,\phi^g_{{{T}}^*}
+\mathrm{const}.$$ Thus
\begin{align*}
\mathrm{Re}B&=\mathrm{Re}\Pi_{{{T}}}\mathrm{Re}A-
\mathrm{Im}\Pi_{{{T}}^*}\mathrm{Im}A;\\
\mathrm{Im}B&=\mathrm{Im}\Pi_{{{T}}}\mathrm{Re}A+
\mathrm{Re}\Pi_{{{T}}^*}\mathrm{Im}A.
\end{align*}
The matrix $\mathrm{Im}\Pi_{{{T}}^*}$ is invertible by Period Matrix Lemma~\ref{l-properties-of-period-matrix2}. Solving in $\mathrm{Im}A$ and $\mathrm{Im}B$ we get
\begin{align*}    
\mathrm{Im}A &= (\mathrm{Im}\Pi_{{{T}}^*})^{-1}\,
\mathrm{Re}\Pi_{{{T}}}\,\mathrm{Re}A-
(\mathrm{Im}\Pi_{{{T}}^*})^{-1}\mathrm{Re}B;\\
\mathrm{Im}B &= \left(
\mathrm{Im}\Pi_{{{T}}}+\mathrm{Re}\Pi_{{{T}}^*}(\mathrm{Im}\Pi_{{{T}}^*})^{-1}\,
\mathrm{Re}\Pi_{{{T}}}
\right)\mathrm{Re}A-
\mathrm{Re}\Pi_{{{T}}^*}(\mathrm{Im}\Pi_{{{T}}^*})^{-1}\mathrm{Re}B.
\end{align*}
Applying Energy Conservation Principle~\ref{th-energy-conservation} for $f:=\phi_{{{T}},P}$ we get the required expression for the matrix of $E_{{T}}(u_{{{T}},P})$.
\end{proof}




Let us conclude the section by a continuous counterpart of Energy Matrix Lemma~\ref{l-relation2}, which is proved analogously.
For $P=(A_1,\dots,A_{g},B_1,\dots,B_{g})\in\mathbb{R}^{2g}$ let $\phi_{{{S}},P}\colon \widetilde{{{S}}}\to \mathbb{C}$ be an Abelian integral of the first kind whose periods have \emph{real parts} 
$A_1,\dots,A_{g},B_1,\dots,B_{g}$. Denote $u_{{{S}},P}:=\mathrm{Re}\,\phi_{{{S}},P}$. 

\begin{lemma}\label{l-relation3} The energy $E_{{S}}(u_{{{S}},P})$ is a quadratic form in the vector $P\in\mathbb{R}^{2g}$ with the block matrix
\begin{equation*}
E_{{{S}}}:=
\left(\begin{matrix}
\Real \Pi_{{{S}}}(\Imaginary \Pi_{{{S}}})^{-1}\Real\Pi_{{S}}+\Imaginary\Pi_{{S}} & -(\Imaginary \Pi_{{{S}}})^{-1}\Real\Pi_{{{S}}} \\
-\Real\Pi_{{{S}}}(\Imaginary \Pi_{{{S}}})^{-1} & (\Imaginary \Pi_{{{S}}})^{-1}
\end{matrix}\right).
\end{equation*}
\end{lemma}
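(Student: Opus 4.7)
The plan is to mirror the proof of Energy Matrix Lemma~\ref{l-relation2}, with one structural simplification: in the continuous setting, the Cauchy--Riemann equations are $\mathbb{C}$-linear, so the map taking A-periods to B-periods is $\mathbb{C}$-linear. Consequently, there is only one period matrix on the smooth side (the analog of $\Pi_{{{T}}^*}$ equals $\Pi_{{{S}}}$), which is exactly why the block matrix in the statement involves $\Pi_{{{S}}}$ alone.

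First I would introduce notation. Let $A_1,\dots,A_g,B_1,\dots,B_g\in\mathbb{C}$ denote the A- and B-periods of $\phi_{{{S}},P}$, and write $A:=(A_1,\dots,A_g)^{\mathrm{T}}$, $B:=(B_1,\dots,B_g)^{\mathrm{T}}$, so that $P=(\mathrm{Re}\,A,\mathrm{Re}\,B)$. The continuous analog of First Existence and Uniqueness Theorem~\ref{th-fund} gives $\phi_{{{S}},P}=\sum_l A_l\phi^l_{{{S}}}+\mathrm{const}$, and by definition of the period matrix $B=\Pi_{{{S}}}A$. Splitting this into real and imaginary parts yields
\begin{align*}
\mathrm{Re}\,B&=\mathrm{Re}\,\Pi_{{{S}}}\cdot\mathrm{Re}\,A-\mathrm{Im}\,\Pi_{{{S}}}\cdot\mathrm{Im}\,A,\\
\mathrm{Im}\,B&=\mathrm{Im}\,\Pi_{{{S}}}\cdot\mathrm{Re}\,A+\mathrm{Re}\,\Pi_{{{S}}}\cdot\mathrm{Im}\,A.
\end{align*}
The matrix $\mathrm{Im}\,\Pi_{{{S}}}$ is invertible (continuous counterpart of Period Matrix Lemma~\ref{l-properties-of-period-matrix2}), so the first equation can be solved for $\mathrm{Im}\,A$ and substituted into the second to express both $\mathrm{Im}\,A$ and $\mathrm{Im}\,B$ as $\mathbb{R}$-linear functions of $P=(\mathrm{Re}\,A,\mathrm{Re}\,B)$.

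Next I would invoke the continuous Energy Conservation Principle, namely $E_{{{S}}}(\mathrm{Re}\,f)=-\mathrm{Im}\sum_{k=1}^g A_k\bar{B}_k$ for any Abelian integral $f$ of the first kind. This is the classical Riemann bilinear relation for smooth Abelian differentials, obtained by applying Stokes' theorem to $\mathrm{Re}\,f\cdot d(\mathrm{Im}\,f)$ on a polygonal fundamental domain, and it is the direct analog of Energy Conservation Principle~\ref{th-energy-conservation}. Expanding
$$-\mathrm{Im}\sum_{k=1}^g A_k\bar{B}_k=\mathrm{Re}\,A^{\mathrm{T}}\,\mathrm{Im}\,B-\mathrm{Im}\,A^{\mathrm{T}}\,\mathrm{Re}\,B$$
and plugging in the expressions for $\mathrm{Im}\,A$ and $\mathrm{Im}\,B$ obtained above produces a quadratic form in $P$. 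Using symmetry of $\mathrm{Im}\,\Pi_{{{S}}}$, $\mathrm{Re}\,\Pi_{{{S}}}$, and hence $(\mathrm{Im}\,\Pi_{{{S}}})^{-1}$ to symmetrize the cross terms yields precisely the $2g\times 2g$ block matrix $E_{{{S}}}$ in the statement.

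The only non-routine step is the continuous Energy Conservation Principle, which is not formulated in the excerpt; however, it is standard and can be proved in complete analogy with the discrete case, replacing the discrete Riemann Bilinear Identity~\ref{th-Riemann2} by its smooth version (Stokes' theorem on the canonical $4g$-gon dissection of ${{S}}$). Given that identity, every subsequent manipulation is a line-by-line transcription of the proof of Energy Matrix Lemma~\ref{l-relation2} with $\Pi_{{{T}}}$ and $\Pi_{{{T}}^*}$ both replaced by $\Pi_{{{S}}}$.
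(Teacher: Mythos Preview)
Your proposal is correct and follows essentially the same approach as the paper, which simply states that the lemma ``is proved analogously'' to Energy Matrix Lemma~\ref{l-relation2}. Your key observation---that on the smooth side the map $A\mapsto B$ is $\mathbb{C}$-linear, so the analog of $\Pi_{{{T}}^*}$ coincides with $\Pi_{{{S}}}$---is exactly the structural simplification that makes the continuous block matrix involve only $\Pi_{{{S}}}$, and the rest is indeed a straightforward transcription of the discrete argument using the classical Riemann bilinear relations in place of Energy Conservation Principle~\ref{th-energy-conservation}.
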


The last two lemmas show that convergence of discrete period matrices reduces to convergence of the energy form. We are going to establish the latter in the next section. 

\section{Convergence of energy and discrete period matrices}

In this section we prove convergence of discrete energy and discrete period matrices to their continuous counterparts. 
In what follows we use the following notation:
\begin{itemize}
\item $\mathrm{Const}$ is a positive constant (like $2\pi$ or $10^{100}$) that does not depend on any parameters of a configuration under consideration (e.g., $h$, $\gamma_{{S}}$, the shape of ${{T}}$);
$\mathrm{Const}$ may denote distinct constants at distinct places of the text (e.g., in the sides of a formula like $2\cdot\mathrm{Const}\le\mathrm{Const}$);
\item $\mathrm{Const}_{x,y,z}$ is a positive constant depending only on the parameters $x,y,z$;
\item 
${\|D^k u(z)\|}:=\max_{0\le j\le k} \left|\frac{\partial^k u}{\partial^j x\,\partial^{k-j}y}(z)\right|$.
\end{itemize}

\subsection{Convergence of energy in a triangle}\label{ssec-energy-convergence-plane}

We start with several well-known lemmas; cf.~\cite[Lemmas~4.2, 4.5, and~2.3]{A1-Skopenkov}.
Throughout this subsection $\Delta$ is a triangle in the plane,
$x,y,z$ are its vertices, and $u\colon\Delta\to\mathbb{R}$ is a smooth function which smoothly extends to a neighborhood of $\Delta$.
The \emph{energy} and the \emph{discrete energy} of the function $u\colon \Delta\to\mathbb{R}$ are $E_{{{S}}_\Delta}(u):=\int_{\Delta}|\nabla u|^2 dxdy$ and 
$$E_{{{T}}_\Delta}(u):=
\frac{1}{2}\cot xyz \cdot (u(z)-u(x))^2+\frac{1}{2}\cot yzx \cdot (u(x)-u(y))^2+\frac{1}{2}\cot zxy \cdot (u(y)-u(z))^2,
$$
respectively. The \emph{interpolation} of the function $u\colon \Delta\to\mathbb{R}$
in the triangle $\Delta$
is the linear function $I_{\Delta}u\colon \Delta\to\mathbb{R}$
\mscomm{which coincides with $u$ at the vertices of the triangle $\Delta$}.
\mscomm{The following well-known lemma is the main motivation for using cotangent weights.}

\begin{interpolation-lemma}\label{l-interpolation-1} 
\textup{(See \cite[\S4]{A1-BOB-duffin_1959})}
$
E_{{{T}}_\Delta}(u)=E_{{{S}}_\Delta}(I_{\Delta} u).
$
\end{interpolation-lemma}

Denote by $h$  and $\delta$ the maximal side length and the minimal angle of the triangle $\Delta$, respectively.

\begin{projection-lemma}\label{cl-projection}
For 
any vector $\overrightarrow v\in\mathbb{R}^2$ we have
$
|\overrightarrow v|\le 2\csc \delta \cdot \left(\frac{|\overrightarrow v\cdot\overrightarrow{xy}|}{|{xy}|}+
\frac{|\overrightarrow v\cdot\overrightarrow{yz}|}{|{yz}|}+
\frac{|\overrightarrow v\cdot\overrightarrow{zx}|}{|{zx}|}\right).
$
\end{projection-lemma}

\begin{proof}
Without loss of generality assume that
$\angle(\overrightarrow{xy},\overrightarrow{xz})=\delta$. 
Since $\angle(\overrightarrow{xy},\overrightarrow{xz})=
\pm\angle(\overrightarrow{v},\overrightarrow{xy})
\pm\angle(\overrightarrow{v},\overrightarrow{xz})$
it follows that at least one of the angles in the right-hand side, say, the first one, does not belong to the interval $(\frac{\pi}{2}-\frac{\delta}{2},\frac{\pi}{2}+\frac{\delta}{2})$.
Then
$
|\overrightarrow{v}|\le
\csc \frac{\delta}{2}\cdot\frac{|\overrightarrow v\cdot\overrightarrow{xy}|}{|{xy}|}\le
2\csc\delta\cdot\frac{|\overrightarrow v\cdot\overrightarrow{xy}|}{|{xy}|}.
$
\end{proof}

\begin{gradient-approximation-lemma}\label{cl-maximal-size}
$
\max_{w\in \Delta}|\nabla u(w)-\nabla I_{\Delta} u(w)|\le \mathrm{Const}_\delta\cdot h\cdot\max_{w\in \Delta}\|D^2 u(w)\|.
$
\end{gradient-approximation-lemma}

\begin{proof}
\mscomm{Denote $\overrightarrow{v}(w):=\nabla u(w)-\nabla I_{\Delta} u(w)$.}
By the Rolle theorem there is a point $w\in xy$ such that 
\mscomm{$\overrightarrow{v}(w)\cdot \overrightarrow{xy}/|\overrightarrow{xy}|=0$}.
Thus \mscomm{$|\overrightarrow{v}
\cdot \overrightarrow{xy}|/|\overrightarrow{xy}|\le \mathrm{Const}\cdot h\cdot \max_{\Delta}\|D^2 u\|$} in the triangle $\Delta$ because $\nabla I_{\Delta} u$ is a constant vector.
The same inequality holds with $xy$ replaced by $yz$ or $zx$. Thus the lemma follows from Projection Lemma~\ref{cl-projection}. 
\end{proof}


\begin{energy-approximation-lemma}\label{l-energy-approximation}
We have
\begin{equation*}
|E_{{{T}}_\Delta}(u)- E_{{{S}}_\Delta}(u)|\le
\mathrm{Const}_\delta\cdot \left(\max_{w\in \Delta}\|D^1 u(w)\|+h\max_{w\in \Delta}\|D^2u(w)\|\right)\cdot h\max_{w\in \Delta}\|D^2u(w)\|\cdot\mathrm{Area}(\Delta).
\end{equation*}
\end{energy-approximation-lemma}

\begin{proof} 
By Interpolation Lemma~\ref{l-interpolation-1} we get
\begin{equation*}
E_{{{T}}_\Delta}(u)- E_{{{S}}_\Delta}(u)
\mscomm{= 
\int_{\Delta}(|\nabla u|^2-|\nabla I_{\Delta} u|^2)\,dxdy}
= 
\int_{\Delta}(2\nabla u+\nabla I_{\Delta} u-\nabla u)(\nabla u-\nabla I_{\Delta} u)\,dxdy.
\end{equation*}
Thus by Gradient Approximation Lemma~\ref{cl-maximal-size} the lemma follows.
\end{proof}

\subsection{Convergence of energy in a cone}\label{ssec-energy-convergence-cone}

Now we are going to estimate the energy near a conical singularity $O$ of ${{S}}$. Let  ${{S}}_O$ be a neighborhood of the singular point $O$ bounded by a piecewise-geodesic closed broken line. Metrically ${{S}}_O$ is a part of a cone of aperture $2\pi/\gamma_O$  with the vertex $O$.
Introduce \mscomm{``}polar coordinates\mscomm{''} $(\rho,\phi)$ on ${{S}}_O$ with the origin at the vertex $O$. 

Let us give the proposed construction of a natural complex structure on
the polyhedral surface ${{S}}$ (see, e.g., \cite[Section~1.1.3]{Bobenko-11}). Define a chart $g_O\colon {{S}}_O\to\mathbb{C}$ by the formula $g_O(\rho,\phi):=\rho^{\gamma_O}\exp(i\gamma_O\phi)$. 
For a point $w\in{{S}}_O$, $w\ne O$, define a chart $g_{w}\colon N_w\to\mathbb{C}$ to be an orientation-preserving and distance-preserving map of a neighborhood $N_w\mscomm{\not\ni O}$ of the point $w$. \mscomm{E.g., for $w$ not belonging to the ray $\phi=2\pi/\gamma_O$ this map can be given by the formula $g_w(\rho,\phi):=\rho\exp(i\phi)$.} The constructed charts form a complex analytic atlas because all transition functions are compositions of maps of the form  $w\mapsto aw+b$ and $w\mapsto w^{\gamma_z}$ (away from the origin). The definition of an analytic function given in Section~\ref{ssec-convergence1} is compatible with the constructed complex structure by the singularity removal theorem. In fact any complex structure on a closed surface can be realized in this way by a piecewise flat metric \cite{Troyanov}. 

For a function $u\colon {{S}}_O\to\mathbb{R}$ denote $u_O:=u\circ g_O^{-1}$ and $u_{w}:=u\circ g_{w}^{-1}$. 
Throughout this subsection $u\colon \mathrm{Int}\,{{S}}_O\to\mathbb{R}$ is a function such that $u_O\colon \mathrm{Int}\, g_O{{S}}_O\to\mathbb{R}$ is smooth 
and smoothly extends to a neighborhood of $g_O{{S}}_O$. 
Denote by $|xy|$ the length of a geodesic segment $xy\subset {{S}}_O$. 
%

Let ${{T}}_{O}$ be a geodesic triangulation of ${{S}}_{O}$ with \mscomm{$O\in {{T}}_O^0$}, the maximal edge length $\le h$ and the minimal face angle $>\delta$.
Let
${{S}}_{O,h}$ be the union of faces of ${{T}}_{O}$
intersecting the $h$-neighborhood of the vertex $O$, and let ${{T}}_{O,h}$ the restriction of the triangulation ${{T}}_{O}$ to ${{S}}_{O,h}$.
We are going to estimate the difference between $E_{{{T}}_{O}}(u):=\sum_{\Delta\in{{T}}^2_{O}} E_{{{T}}_\Delta}(u)$ and $E_{{{S}}_{O}}(u) :=\sum_{\Delta\in{{T}}^2_{O}}E_{{{S}}_\Delta}(u)$.
The main difficulty in our estimates  is that 
the partial derivatives of $u_{w}$ are not necessarily bounded near the vertex~$O$. 


\begin{derivative-estimation-lemma}\label{l-derivatives}
For each $w=(\rho,\phi)\in{{S}}_O$ such that $w\ne O$
we have
$$
\mscomm{\left\|D^1u_w(z)\left|_{z=g_w(w)}\right.\right\|} \le \mathrm{Const}_{u,{{S}}_O} \cdot\rho^{\gamma_O-1}
\qquad\text{and}\qquad
\mscomm{\left\|D^2u_w(z)\left|_{z=g_w(w)}\right.\right\|} \le \mathrm{Const}_{u,{{S}}_O} \cdot\rho^{\gamma_O-2}.
$$
\end{derivative-estimation-lemma}

\begin{proof} By the chain rule and the Leibnitz rule we have
\mscomm{
\begin{align*}
\left\|D^1u_w(z)\left|_{z=g_w(w)}\right.\right\| &\le \mathrm{Const} \cdot  \left\|D^1(g_O\circ g^{-1}_w)(z)\left|_{z=g_w(w)}\right.\right\|\cdot \left\|D^1u_O(z)\left|_{z=g_O(w)}\right.\right\|;\\
\left\|D^2u_w(z)\left|_{z=g_w(w)}\right.\right\| &\le \mathrm{Const}\cdot 
\left\|D^2(g_O\circ g^{-1}_w)(z)\left|_{z=g_w(w)}\right.\right\|\cdot \left\|D^1u_O(z)\left|_{z=g_O(w)}\right.\right\| \\
&+\mathrm{Const}\cdot
\left\|D^1(g_O\circ g^{-1}_w)(z)\left|_{z=g_w(w)}\right.\right\|^2  \cdot\left\|D^2u_O(z)\left|_{z=g_O(w)}\right.\right\|.
\end{align*}
}
The map $g_O\circ g^{-1}_w\colon g_wN_w\to\mathbb{C}$ \mscomm{has the form $z\mapsto A z^{\gamma_O}$, where $|A|=1$}.
A direct computation shows that
$\|D^k(g_O\circ g^{-1}_w)\|\le \mathrm{Const}_{\gamma_O}\rho^{\gamma_O-k}$ for each $k=1,2$.
In particular,  $\|D^1(g_O\circ g^{-1}_w)\|^{\mscomm{2}}\le \mathrm{Const}_{\gamma_O}\rho^{2\gamma_O-2}\le \mathrm{Const}_{{{S}}_O}\rho^{\gamma_O-2}$.
Since $u_O\colon \mathrm{Int}\,g_O{{S}}_O\to\mathbb{R}$ is smooth and smoothly extends to a neighborhood of~$g_O{{S}}_O$
 it follows that $\|D^1 u_O\|, \|D^2 u_O\| \le\mathrm{Const}_{u,{{S}}_O}$
  on the compact set $g_O{{S}}_O$,
and the required estimate follows.
\end{proof}


\begin{lemma}\label{l-energy-distance}
For each $\Delta\in{{T}}_{O}^2-{{T}}_{O,h}^2$ we have
$
|E_{{{T}}_\Delta}(u)- E_{{{S}}_\Delta}(u)|\le
\mathrm{Const}_{u,\delta,{{S}}_O}\cdot h\cdot
\int_ \Delta\rho^{2\gamma_O-2}\, d\rho d\phi.
$
\end{lemma}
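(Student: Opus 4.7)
The plan is to apply Energy Approximation Lemma~\ref{l-energy-approximation} in a local isometric chart covering $\Delta$, use Derivative Estimation Lemma~\ref{l-derivatives} to bound the resulting derivative norms, and rewrite the result in polar-coordinate integral form. Since $\Delta\in{{T}}^2_O\setminus{{T}}^2_{O,h}$, the triangle $\Delta$ is disjoint from the $h$-neighborhood of $O$, so $\rho(w)\ge h$ for every $w\in\Delta$. Because the maximal edge of $\Delta$ has length at most $h$, we also have $\mathrm{diam}(\Delta)\le h\le\min_\Delta\rho$, and consequently $\max_\Delta\rho\le 2\min_\Delta\rho$. The angular width of $\Delta$ as seen from $O$ is at most $\mathrm{diam}(\Delta)/\min_\Delta\rho\le 1$ radian, strictly less than the cone aperture, so for $h$ small the triangle $\Delta$ can be isometrically developed into the plane by a single chart $g_w$ with $w\in\Delta$, turning $\Delta$ into a Euclidean triangle of the same minimal angle, edge lengths, and area.

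Applying Energy Approximation Lemma~\ref{l-energy-approximation} to $u_w:=u\circ g_w^{-1}$ on $g_w\Delta$ gives
$$
|E_{{{T}}_\Delta}(u)-E_{{{S}}_\Delta}(u)|\le\mathrm{Const}_\delta\left(\max_{g_w\Delta}\|D^1 u_w\|+h\max_{g_w\Delta}\|D^2 u_w\|\right)h\max_{g_w\Delta}\|D^2 u_w\|\cdot\mathrm{Area}(\Delta).
$$
By Derivative Estimation Lemma~\ref{l-derivatives} combined with the $\rho$-comparability on $\Delta$, I obtain $\max_{g_w\Delta}\|D^1 u_w\|\le\mathrm{Const}_{u,{{S}}_O}\rho_\Delta^{\gamma_O-1}$ and $\max_{g_w\Delta}\|D^2 u_w\|\le\mathrm{Const}_{u,{{S}}_O}\rho_\Delta^{\gamma_O-2}$, where $\rho_\Delta:=\min_\Delta\rho$. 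The inequality $h\le\rho_\Delta$ then absorbs the second summand in the bracket ($h\rho_\Delta^{\gamma_O-2}\le\rho_\Delta^{\gamma_O-1}$), reducing the estimate to
$$
|E_{{{T}}_\Delta}(u)-E_{{{S}}_\Delta}(u)|\le\mathrm{Const}_{u,\delta,{{S}}_O}\cdot h\cdot\rho_\Delta^{2\gamma_O-3}\cdot\mathrm{Area}(\Delta).
$$

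The final step rewrites this in the claimed integral form. Since the intrinsic area form on the cone is $\rho\,d\rho\,d\phi$, the $\rho$-comparability yields $\mathrm{Area}(\Delta)\le 2\rho_\Delta\int_\Delta d\rho d\phi$, while $\int_\Delta\rho^{2\gamma_O-2}d\rho d\phi\ge \mathrm{const}_{{{S}}_O}\cdot\rho_\Delta^{2\gamma_O-2}\int_\Delta d\rho d\phi$, where the absolute constant depends on the sign of $2\gamma_O-2$ but only through the fixed exponent $\gamma_O$ of the surface. Eliminating $\int_\Delta d\rho d\phi$ between the two inequalities gives $\rho_\Delta^{2\gamma_O-3}\mathrm{Area}(\Delta)\le\mathrm{Const}_{{{S}}_O}\int_\Delta\rho^{2\gamma_O-2}d\rho d\phi$, yielding the required bound. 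The one delicate point is the availability of the developing chart $g_w$ with $\Delta\subset N_w$, which is guaranteed by the smallness of $h$ and the fact that $\Delta$ stays away from $O$; everything else is routine bookkeeping on top of the two cited lemmas.
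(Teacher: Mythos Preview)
Your proof is correct and follows essentially the same line as the paper's: apply Energy Approximation Lemma~\ref{l-energy-approximation} in a flat chart, bound the derivatives via Derivative Estimation Lemma~\ref{l-derivatives} using the $\rho$-comparability $\tfrac12|Oz|\le\rho\le 2|Oz|$ on $\Delta$, and then convert the pointwise bound to the integral $\int_\Delta\rho^{2\gamma_O-2}\,d\rho\,d\phi$. The only quibble is that your angular-width justification for the developing chart is both unnecessary (each face of ${{T}}$ is a flat triangle by hypothesis, so $g_w|_\Delta$ is automatically an isometry onto a Euclidean triangle once $O\notin\Delta$) and not literally valid when the aperture $2\pi/\gamma_O<1$, but the conclusion you draw from it holds regardless.
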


\begin{proof} 
Let $z\in\Delta$ be the vertex closest to $O$.
Since 
 $\Delta\in{{T}}_{O}^2-{{T}}_{O,h}^2$ 
it follows that for each point $(\rho,\phi)\in \Delta$ \mscomm{we have}
$h\le |Oz|\le \rho\le |Oz|+h\le 2|Oz|$. 
Thus by Derivative Estimation Lemma~\ref{l-derivatives}
and Energy Approximation Lemma~\ref{l-energy-approximation}
we get
\begin{align*}
|E_{{{T}}_\Delta}(u)- E_{{{S}}_\Delta}(u)|&\le
\mathrm{Const}_{u,\delta,{{S}}_O}
(|Oz|^{\gamma_O-1}+h|Oz|^{\gamma_O-2}) \cdot h|Oz|^{\gamma_O-2}
\mathrm{Area}(\Delta)\\
&\le  
\mathrm{Const}_{u,\delta,{{S}}_O}\cdot h\cdot
\int_ \Delta\rho^{2\gamma_O-2}\, d\rho d\phi.\\[-1.3cm]
\end{align*}  
\end{proof}

\begin{lemma}\label{l-outside-neighborhood-energy}
We have
$ |E_{{{T}}_{O}-{{T}}_{O,h}}(u)-E_{{{S}}_{O}-{{S}}_{O,h}}(u)|\le
\mathrm{Const}_{u,\delta,{{S}}_O}\cdot
\begin{cases}
h, &\text{if }\gamma_O>1/2;\\
h|\log \mscomm{\frac{h}{\mathrm{Diam}({{S}}_O)}}|, &\text{if } \gamma_O=1/2;\\
h^{2\gamma_O}, &\text{if }\gamma_O<1/2.
\end{cases}
$
\end{lemma}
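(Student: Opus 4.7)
The plan is to sum the per-triangle estimate from Lemma~\ref{l-energy-distance} over all faces in ${{T}}_O^2 - {{T}}_{O,h}^2$. By additivity of both energies and the triangle inequality,
$$
|E_{{{T}}_O - {{T}}_{O,h}}(u) - E_{{{S}}_O - {{S}}_{O,h}}(u)|
\le \sum_{\Delta\in {{T}}_O^2 - {{T}}_{O,h}^2} |E_{{{T}}_\Delta}(u) - E_{{{S}}_\Delta}(u)|.
$$
Since the faces have pairwise disjoint interiors and together constitute ${{S}}_O - {{S}}_{O,h}$, Lemma~\ref{l-energy-distance} gives
$$
|E_{{{T}}_O - {{T}}_{O,h}}(u) - E_{{{S}}_O - {{S}}_{O,h}}(u)|
\le \mathrm{Const}_{u,\delta,{{S}}_O}\cdot h\cdot \int_{{{S}}_O - {{S}}_{O,h}} \rho^{2\gamma_O - 2}\,d\rho\,d\phi.
$$

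Next I would estimate the integral on the right by enlarging the domain of integration. By definition of ${{S}}_{O,h}$, any face that is not contained in ${{S}}_{O,h}$ is disjoint from the open $h$-neighborhood of $O$, so every point of ${{S}}_O - {{S}}_{O,h}$ satisfies $\rho\ge h$. Setting $D:=\mathrm{Diam}({{S}}_O)$ and using that the angular coordinate $\phi$ ranges over an interval of length $2\pi/\gamma_O$, I obtain
$$
\int_{{{S}}_O - {{S}}_{O,h}} \rho^{2\gamma_O - 2}\,d\rho\,d\phi
\le \frac{2\pi}{\gamma_O}\int_h^{D}\rho^{2\gamma_O - 2}\,d\rho.
$$

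The remaining one-dimensional integral is evaluated according to the sign of $2\gamma_O - 1$. If $\gamma_O > 1/2$ the exponent $2\gamma_O - 2 > -1$, the integrand is integrable near zero, and the integral is bounded by $D^{2\gamma_O - 1}/(2\gamma_O - 1)$, hence by a constant depending only on ${{S}}_O$. If $\gamma_O = 1/2$ the integral equals $\log(D/h) = |\log(h/\mathrm{Diam}({{S}}_O))|$. If $\gamma_O < 1/2$ the integral is bounded by $h^{2\gamma_O - 1}/(1 - 2\gamma_O)$. Multiplying each of the three cases by the prefactor $h$ reproduces exactly the bound asserted in the lemma.

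The argument is essentially bookkeeping on top of Lemma~\ref{l-energy-distance}; the only mildly subtle point is the non-standard measure $d\rho\,d\phi$ (in place of the cone area element $\rho\,d\rho\,d\phi$) built into the statement of Lemma~\ref{l-energy-distance}. Once this normalization is accepted, the three regimes in the conclusion are forced by the elementary integral $\int_h^D \rho^{2\gamma_O-2}\,d\rho$, and I do not expect any further technical obstacle.
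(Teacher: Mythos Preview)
Your proof is correct and follows essentially the same route as the paper: sum the per-triangle bound of Lemma~\ref{l-energy-distance}, enlarge the integration domain to the annulus $\{h\le\rho\le\mathrm{Diam}({{S}}_O),\ 0\le\phi\le 2\pi/\gamma_O\}$, and evaluate the resulting one-dimensional integral in the three regimes of $\gamma_O$. The only cosmetic slip is the phrase ``any face that is not contained in $S_{O,h}$'' where you mean ``any face not in $T_{O,h}^2$''; the argument itself is fine.
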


\begin{proof}
Summing the inequalities of Lemma~\ref{l-energy-distance}, enlarging the integration domain, and evaluating the integral we get 
\begin{align*}
 |E_{{{T}}_{O}-{{T}}_{O,h}}(u)-E_{{{S}}_{O}-{{S}}_{O,h}}(u)|
 &\le
 \sum_{\Delta\in \mathcal T_{O}^2-{{T}}_{O,h}}
|E_{{{T}}_\Delta}(u)-E_{{{S}}_\Delta}(u)|\\
&\le
\mathrm{Const}_{u,\delta,{{S}}_O}\cdot h\cdot
\int_{{{S}}_{O}-{{S}}_{O,h}}\rho^{2\gamma_O-2}\, d\rho d\phi\\
&\le 
\mathrm{Const}_{u,\delta,{{S}}_O}\cdot h\cdot
\int_{h\le \rho\le \mathrm{Diam}({{S}}_O), 0\le\phi\le 2\pi/\gamma_O}\rho^{2\gamma_O-2}\, d\rho d\phi\\
&\le 
\mathrm{Const}_{u,\delta,{{S}}_O}\cdot
\begin{cases}
h, &\text{if }\gamma_O>1/2;\\
h|\log \mscomm{\frac{h}{\mathrm{Diam}({{S}}_O)}}|, &\text{if } \gamma_O=1/2;\\
h^{2\gamma_O}, &\text{if }\gamma_O<1/2.
\end{cases}
\\[-1.3cm]
\end{align*}
\end{proof}

\begin{remark}\label{rem-smaller-polygon}
Lemma~\ref{l-outside-neighborhood-energy} and its proof remain true, if we replace ${{S}}_O$ and ${{T}}_O$ in the left-hand side by a triangulated subset ${{S}}_O'\subset{{S}}_O$ with a triangulation ${{T}}_O'$ of the maximal edge length $\le h$ and the minimal face angle $>\delta$ (the constant in right-hand side still depends on ${{S}}_O$ but not ${{S}}_O'$). 
\end{remark}

\begin{lemma}\label{l-continuous-neighborhood-energy}
We have
$
 E_{{{S}}_{O,h}}(u)\le
\mathrm{Const}_{u,{{S}}_O}\cdot h^{2\gamma_O}.
$
\end{lemma}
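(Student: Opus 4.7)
The plan is to bound $|\nabla u|$ pointwise on each face of ${{T}}_{O,h}$ via Derivative Estimation Lemma~\ref{l-derivatives}, and then integrate the resulting bound in polar coordinates on the cone ${{S}}_O$.

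First I observe that since every face of ${{T}}_O$ has diameter at most $h$, and every face of ${{T}}_{O,h}$ meets the open disk of radius $h$ centered at $O$, the set ${{S}}_{O,h}$ is contained in the intrinsic disk $\{(\rho,\phi) : \rho \le 2h\}$ on the cone ${{S}}_O$. Next, for each point $w=(\rho,\phi)\in{{S}}_{O,h}$ with $w\ne O$, the chart $g_w$ is a local isometry, so $|\nabla u(w)|\le \sqrt{2}\,\|D^1u_w(g_w(w))\|$, and Derivative Estimation Lemma~\ref{l-derivatives} gives
\[
|\nabla u(w)|^2 \;\le\; \mathrm{Const}_{u,{{S}}_O}\cdot \rho^{2\gamma_O-2}.
\]

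Since the area element on ${{S}}_O$ in the polar coordinates $(\rho,\phi)$ is $\rho\,d\rho\,d\phi$, summing the face integrals and enlarging the integration domain to the disk of radius $2h$ yields
\[
E_{{{S}}_{O,h}}(u)
\;=\; \int_{{{S}}_{O,h}} |\nabla u|^2\,dxdy
\;\le\; \mathrm{Const}_{u,{{S}}_O}\int_0^{2h}\int_0^{2\pi/\gamma_O} \rho^{2\gamma_O-2}\cdot\rho\,d\phi\,d\rho
\;=\; \mathrm{Const}_{u,{{S}}_O}\cdot h^{2\gamma_O},
\]
since the exponent $2\gamma_O-1>-1$ makes the $\rho$-integral finite and equal to a constant multiple of $h^{2\gamma_O}$.

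There is essentially no obstacle here: the only subtlety is remembering that $\gamma_O>0$ (so the integrand is integrable at the apex) and that comparing the ambient gradient $|\nabla u|$ on ${{S}}_O$ with $\|D^1 u_w\|$ is legal because each $g_w$ is an orientation-preserving isometry away from $O$. The single-point set $\{O\}$ has measure zero and can be ignored in the integration.
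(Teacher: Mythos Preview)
Your proof is correct and follows essentially the same approach as the paper: bound $|\nabla u|^2$ pointwise by $\mathrm{Const}_{u,{{S}}_O}\cdot\rho^{2\gamma_O-2}$ via Derivative Estimation Lemma~\ref{l-derivatives}, use that ${{S}}_{O,h}\subset\{\rho\le 2h\}$, and integrate in polar coordinates. The paper's proof is a one-line version of the same computation; your additional remarks about integrability at the apex and the isometric charts $g_w$ are correct but not strictly needed.
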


\begin{proof}
By Derivative Estimation Lemma~\ref{l-derivatives} we have
\begin{equation*}
E_{{{S}}_{O,h}}(u)=
\sum_{\Delta\in{{T}}_{O,h}^2}\int_{\Delta} |\nabla u|^2 dxdy
\le\mathrm{Const}_{u,{{S}}_O}\int_{0\le\rho\le 2h,0\le \phi\le 2\pi/\gamma_O} \rho^{2\gamma_O-1}\, d\rho d\phi
 \le \mathrm{Const}_{u,{{S}}_O}\cdot h^{2\gamma_O}\\[-0.8cm].
\end{equation*}
\end{proof}

For an analogous estimate of the discrete energy we need the following auxiliary assertions. 

\begin{lemma}\label{l-difference} For each edge $xy\in{{T}}^1_O$ we have 
$|u(x)-u(y)|\le \mathrm{Const}_{u,{{S}}_O} \cdot |xy|\cdot \max\{|Ox|, |Oy|\}^{\gamma_O-1}.$
\end{lemma}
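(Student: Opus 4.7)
\smallskip\noindent\textbf{Proof plan.}
The plan is to pull back to the uniformizing chart $g_O$, where $u$ becomes the function $u_O:=u\circ g_O^{-1}$, smooth on a neighborhood of the compact set $g_O{{S}}_O$ and hence Lipschitz with some constant $M_u$ depending only on $u$ and~${{S}}_O$. This reduces the claim to the purely geometric estimate
\[
|g_O(x)-g_O(y)|\le \mathrm{Const}_{{{S}}_O}\cdot |xy|\cdot \max\{|Ox|,|Oy|\}^{\gamma_O-1}.
\]
Set $r:=\max\{|Ox|,|Oy|\}$ and assume without loss of generality that $|Ox|\le|Oy|=r$. The proof will be a dichotomy based on whether the edge is \emph{local} or \emph{global} relative to its distance to~$O$.

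In the local case $|xy|\le|Ox|/2$, the triangle inequality gives $r\le|Ox|+|xy|\le\tfrac{3}{2}|Ox|$, so $|Ox|$ and $|Oy|$ are both comparable to $r$. Moreover, along the unit-speed geodesic $\gamma\colon[0,|xy|]\to{{S}}_O$ from $x$ to $y$, which avoids $O$, the function $\rho(s):=|O\gamma(s)|$ is $1$-Lipschitz and convex, so $r/3\le|Ox|/2\le\rho(s)\le r$ throughout. Away from $O$ the chart $g_O$ is smooth with differential of operator norm $\gamma_O\rho^{\gamma_O-1}$ (this is the conformal factor computed inside the proof of Derivative Estimation Lemma~\ref{l-derivatives}); integrating along $\gamma$ and using the bound $\rho(s)^{\gamma_O-1}\le 3\,r^{\gamma_O-1}$ (valid regardless of the sign of $\gamma_O-1$, thanks to the two-sided comparison $r/3\le\rho(s)\le r$) yields the claim in this case.

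In the global case $|xy|>|Ox|/2$, which also conveniently covers the possibility $|Ox|=0$ (that is, $O=x$), the triangle inequality gives $r\le|Ox|+|xy|<3|xy|$, and the triangle inequality in~$\mathbb{C}$ combined with $|g_O(z)|=|Oz|^{\gamma_O}$ yields
\[
|g_O(x)-g_O(y)|\le|Ox|^{\gamma_O}+|Oy|^{\gamma_O}\le 2 r^{\gamma_O}=2r\cdot r^{\gamma_O-1}\le 6\,|xy|\cdot r^{\gamma_O-1}.
\]
The main obstacle is the local case when $\gamma_O<1$: the conformal factor $\gamma_O\rho^{\gamma_O-1}$ diverges as $\rho\to 0$, so one really needs a uniform lower bound on $\rho$ along $xy$. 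This is precisely what motivates the cutoff at $|xy|=|Ox|/2$: below the cutoff $\rho$ cannot drop too much during the traverse, and above it the crude chart-size estimate used in the global case absorbs the factor $r$ into a multiple of $|xy|$.
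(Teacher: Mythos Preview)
Your proof is correct and takes a genuinely different route from the paper's.

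The paper integrates the derivative bound from Derivative Estimation Lemma~\ref{l-derivatives} directly along the edge in polar coordinates, splitting $|d\rho|+\rho\,|d\phi|$ and handling the radial and angular parts separately; this leads to a case analysis according to the position of the foot of the perpendicular from $O$ to the line $xy$, followed by a further split $|xy|\gtrless|Oy|$, and invokes the Bernoulli inequality and the sine theorem in the triangle $Oxy$ to close the estimate. Your argument instead factors through the Lipschitz continuity of $u_O$ on the compact set $g_O{{S}}_O$, reducing everything to the purely geometric bound $|g_O(x)-g_O(y)|\le \mathrm{Const}_{{{S}}_O}\cdot|xy|\cdot r^{\gamma_O-1}$, and then runs a clean local/global dichotomy in $|xy|$ versus $|Ox|$. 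In the local regime your two-sided control $r/3\le\rho(s)\le r$ (from $1$-Lipschitz plus convexity of $\rho$ along a straight segment in the flat development) does in one stroke what the paper's perpendicular-foot analysis does; in the global regime the crude chart-size bound replaces the Bernoulli and sine-theorem steps. The paper's approach is slightly more self-contained (it never names the Lipschitz constant of $u_O$ explicitly), while yours is shorter and makes the role of the chart $g_O$ more transparent. Both yield constants depending only on $u$ and ${{S}}_O$.
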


\begin{proof} 
Assume $|Ox|\le |Oy|$ without loss of generality.
Let a point $w=(\rho,\phi)$ move along the segment $xy$. Denote by $Oz$ the perpendicular from the point $O$ to the geodesic containing $xy$. 
By Derivative Estimation Lemma~\ref{l-derivatives}, the Bernoulli inequality, \mscomm{the inequality $\phi\le 2\sin\phi$ for $0\le\phi\le\pi/2$}, and the sine theorem for the triangle $Oxy$ (
in the development of the cone ${{S}}_O$ into the plane
) we have
\begin{align*} 
|u(x)-u(y)|&\le \int_{xy}\left\|D^1u_{w}(z)\left|_{z=g_{w}(w)}\right.\right\|\cdot (|d\rho|+\rho |d\phi|)\\
&\le \mathrm{Const}_{u,{{S}}_O}\int_{xy} \rho^{\gamma_O-1} (|d\rho|+\rho |d\phi|)\\
&\le \mathrm{Const}_{u,{{S}}_O}
\begin{cases}
\int_{xy} \rho^{\gamma_O-1} d\rho+|\int_{xy} \rho^{\gamma_O}d\phi|, &\text{if $x$ is between $y$ and $z$};\\
\int_{zx} \rho^{\gamma_O-1} d\rho +\int_{zy} \rho^{\gamma_O-1} d\rho+|\int_{xy} \rho^{\gamma_O}d\phi|, &\text{if $z$ is between $x$ and $y$};
\end{cases}
\\
&\le \mathrm{Const}_{u,{{S}}_O}\cdot
\left(\mscomm{2}|Oy|^{\gamma_O}-\mscomm{2}\max\{0,|Oy|-|xy|\}^{\gamma_O}+|Oy|^{\gamma_O}\cdot\angle xOy\right)\\
&\le
\mathrm{Const}_{u,{{S}}_O} \cdot
\begin{cases}
\mscomm{2}|Oy|^{\gamma_O}+|Oy|^{\gamma_O}\cdot \pi, &\text{if }|xy|\ge|Oy|;\\
\mscomm{2}\max\{1,\gamma_O\}\cdot |xy|\cdot |Oy|^{\gamma_O-1}+2|Oy|^{\gamma_O}\sin xOy, &\text{if }|xy|<|Oy|;
\end{cases}
\\
&\le\mathrm{Const}_{u,{{S}}_O} \cdot |xy|\cdot |Oy|^{\gamma_O-1}
\end{align*}
\mscomm{Here we set $\angle xOy=0$, if the segment $Oy$ contains $x$.}
\end{proof}



\begin{lemma}\label{l-min-angle-estimate} For each $xyz\in{{T}}^2_O$ we have
\begin{align*}
|xy|^2 &\le \mathrm{Const}_\delta\cdot\mathrm{Area}(xyz);\\ 
\max\{|Ox|, |Oy|, |Oz|\}&\le \mathrm{Const}_\delta\cdot\max\{|Ox|, |Oy|\}
\end{align*}
\end{lemma}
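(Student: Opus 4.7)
The plan is to derive both inequalities from elementary Euclidean geometry in the flat triangle $xyz$, combined with the triangle inequality for the intrinsic metric on ${{S}}_O$. The minimum-angle hypothesis gives comparability of the three sides via the law of sines, and from there everything is routine.

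For the first inequality, I would write $\mathrm{Area}(xyz) = \tfrac{1}{2}|xy|\cdot|xz|\sin(\angle yxz)$. Since every face angle exceeds $\delta$ and all angles sum to $\pi$, the minimal angle $\delta$ necessarily lies in $(0,\pi/3]$, so each angle lies in $[\delta,\pi-2\delta]$ and $\sin(\angle yxz)\ge\sin\delta$. The law of sines in the flat triangle $xyz$ gives
\[
\frac{|xz|}{|xy|}=\frac{\sin(\angle xyz)}{\sin(\angle xzy)}\ge\frac{\sin\delta}{1}=\sin\delta,
\]
so $|xz|\ge\sin\delta\cdot|xy|$. Substituting, $\mathrm{Area}(xyz)\ge\tfrac{1}{2}\sin^{2}\delta\cdot|xy|^2$, which is the claim with $\mathrm{Const}_\delta=2\csc^{2}\delta$.

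For the second inequality, set $M:=\max\{|Ox|,|Oy|\}$; it suffices to show $|Oz|\le\mathrm{Const}_\delta\cdot M$, since then the max on the left is obviously bounded by the max of $M$ and this estimate. The triangle inequality for geodesic distances on the cone ${{S}}_O$ gives $|Oz|\le|Ox|+|xz|\le M+|xz|$ and $|xy|\le|Ox|+|Oy|\le 2M$. Once again the law of sines on the flat face yields $|xz|\le|xy|/\sin\delta\le 2M/\sin\delta$, so
\[
|Oz|\le\left(1+\tfrac{2}{\sin\delta}\right)M.
\]
Edge cases in which one of $x,y,z$ coincides with $O$ are either trivial (if $z=O$) or reduce to the same law-of-sines bound (if $x=O$ or $y=O$, where $M=|xy|$ and $|Oz|=|xz|\le|xy|/\sin\delta$).

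The only potential subtlety is that the law of sines is a Euclidean fact, while $xyz$ is a face of a geodesic triangulation on a cone; however, by hypothesis every face is a flat triangle, isometric to a Euclidean triangle, so this is automatic. The triangle inequality for $|Ox|,|Oy|,|xy|$ etc.\ is valid in any length space and needs no additional justification. I do not foresee a genuine obstacle.
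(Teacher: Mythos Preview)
Your proof is correct and follows essentially the same approach as the paper. The second inequality is handled identically (triangle inequality plus the law of sines to compare side lengths); for the first inequality the paper uses a geometric containment argument---the isoceles triangle on base $xy$ with base angles $\delta$ sits inside $xyz$, giving $\mathrm{Area}(xyz)\ge\tfrac14|xy|^2\tan\delta$---while you go via the area formula and law of sines to get $\mathrm{Const}_\delta=2\csc^2\delta$, but these are minor variations of the same idea.
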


\begin{proof} Take the point $w$ such that $\angle xyw=\angle yxw=\delta$ and the triangles $xyz$ and $xyw$ have the same orientation. Since $\angle xyz,\angle yxz>\delta$ it follows that $xyw\subset yxz$. Thus $\mathrm{Area}(xyz)>\mathrm{Area}(xyw)=\tan\delta\cdot 
|xy|^2/4$, and the first inequality follows. 
Assume that $|Ox|\le |Oy|\le |Oz|$ without loss of generality. By the triangle inequality and the sine theorem we get the second inequality
\begin{equation*}
|Oz|-|Oy|\le |yz|
\le \csc \delta \cdot |xy|
\le \csc \delta \cdot (|Ox|+|Oy|)
\le 2\csc \delta\cdot |Oy|.\\[-0.6cm] 
\end{equation*}
\end{proof}

\begin{lemma}\label{l-energy-distance2}
For each $\Delta\in{{T}}_{O,h}^2$ we have
$
E_{{{T}}_\Delta}(u)\le
\mathrm{Const}_{u,\delta,{{S}}_O}\cdot
\mscomm{\int_{\Delta} {\rho}^{2\gamma_O-1} d\rho d\phi.}
$
\end{lemma}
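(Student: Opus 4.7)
The plan is to rewrite the discrete energy via Interpolation Lemma~\ref{l-interpolation-1} as the $L^2$-norm of the (constant) gradient of the linear interpolant $I_\Delta u$, bound this gradient pointwise by combining Projection Lemma~\ref{cl-projection} with Lemma~\ref{l-difference}, and then compare the resulting estimate to a lower bound on the polar integral on the right-hand side.

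By Interpolation Lemma~\ref{l-interpolation-1} we have $E_{{T}_\Delta}(u)=\int_\Delta|\nabla I_\Delta u|^2\,dxdy$, and $\nabla I_\Delta u$ is a constant vector on $\Delta$ whose dot products with the side vectors are the vertex-value differences: $\nabla I_\Delta u\cdot\overrightarrow{xy}=u(y)-u(x)$, and similarly for the other two sides. Applying Projection Lemma~\ref{cl-projection} to $\vec v=\nabla I_\Delta u$ and then Lemma~\ref{l-difference} yields
\begin{equation*}
|\nabla I_\Delta u|\le \mathrm{Const}_\delta\sum_{e\in\{xy,yz,zx\}}\frac{|u(h_e)-u(t_e)|}{|e|}\le \mathrm{Const}_{u,\delta,{{S}}_O}\sum_{e\in\{xy,yz,zx\}}\max_{v\in\{h_e,t_e\}}|Ov|^{\gamma_O-1}.
\end{equation*}
After reordering the vertices so that $|Ox|\le|Oy|\le|Oz|=:R$, Lemma~\ref{l-min-angle-estimate}(2) gives $|Oy|\ge R/\mathrm{Const}_\delta$, so on every edge of $\Delta$ the larger endpoint distance lies in the interval $[R/\mathrm{Const}_\delta,R]$. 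Regardless of the sign of $\gamma_O-1$ this leads to $|\nabla I_\Delta u|^2\le \mathrm{Const}_{u,\delta,{{S}}_O}\,R^{2\gamma_O-2}$, whence
\begin{equation*}
E_{{T}_\Delta}(u)\le \mathrm{Const}_{u,\delta,{{S}}_O}\,R^{2\gamma_O-2}\,\mathrm{Area}(\Delta).
\end{equation*}

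It remains to prove the matching lower bound $\int_\Delta\rho^{2\gamma_O-1}\,d\rho\,d\phi=\int_\Delta\rho^{2\gamma_O-2}\,dxdy\ge \mathrm{Const}_{\delta,\gamma_O}\,R^{2\gamma_O-2}\,\mathrm{Area}(\Delta)$. If $\gamma_O\le 1$, this is immediate: the function $\rho$ is convex on the flat triangle $\Delta$, so $\rho\le R$ there, and the non-positive exponent forces $\rho^{2\gamma_O-2}\ge R^{2\gamma_O-2}$ pointwise. If $\gamma_O>1$, I would instead localize to a sub-region of $\Delta$ of area comparable to $\mathrm{Area}(\Delta)$ on which $\rho\ge R/\mathrm{Const}_\delta$: when $O$ is a vertex of $\Delta$, take the ``outer'' part of $\Delta$ at distance $\ge R/\mathrm{Const}_\delta$ from $O$ (a scaled copy of $\Delta$ whose area is a fixed fraction of $\mathrm{Area}(\Delta)$); when $O\notin\Delta$, take the intersection of $\Delta$ with a ball of radius $R/(2\mathrm{Const}_\delta)$ centered at the vertex farthest from $O$, whose area is controlled from below by the minimum-angle condition. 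Combining with the upper bound completes the proof.

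The most delicate step is the area comparison when $\gamma_O>1$ and $O$ is close to (but outside of) a vertex of $\Delta$: the minimum of $\rho$ on $\Delta$ can then be much smaller than $R$, so $\rho$ cannot be bounded below uniformly on $\Delta$. The remedy is to integrate only near the farthest vertex, where $\rho\sim R$ is automatic on a macroscopic chunk of $\Delta$ of area $\sim R^2\sim \mathrm{Area}(\Delta)$.
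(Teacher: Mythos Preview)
Your proof is correct and proceeds along the same lines as the paper's: both first reach the intermediate bound $E_{{{T}}_\Delta}(u)\le\mathrm{Const}_{u,\delta,{{S}}_O}\,|Oz|^{2\gamma_O-2}\,\mathrm{Area}(\Delta)$ (you via the Interpolation and Projection Lemmas, the paper via the crude bound $\cot\le\cot\delta$ together with Lemma~\ref{l-min-angle-estimate}) and then compare with the polar integral. For the case $\gamma_O>1$ the paper replaces your case split by a single homothety of ratio $\tfrac14$ centered at the farthest vertex $z$: since every point of $\Delta$ lies within $2|Oz|$ of $z$, on the image $\tfrac14\Delta$ one has $\rho\ge|Oz|/2$, and its area is $\tfrac1{16}\,\mathrm{Area}(\Delta)$, which gives the needed lower bound for $\int_\Delta\rho^{2\gamma_O-1}\,d\rho\,d\phi$ in one line regardless of whether $O$ is a vertex of~$\Delta$.
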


\begin{proof}
Denote by $x,y,z$ the vertices of $\Delta$ so that $|Ox|\le |Oy|\le |Oz|$.
Since the minimal face angle is greater than $\delta$, using Lemmas~\ref{l-difference}--\ref{l-min-angle-estimate} and estimating the sum via the integral we get
\begin{align*}
   E_{{{T}}_\Delta}(u)&=
  \frac{1}{2}\cot xyz \cdot (u(z)-u(x))^2+\frac{1}{2}\cot yzx \cdot (u(x)-u(y))^2+\frac{1}{2}\cot zxy \cdot (u(y)-u(z))^2 \\
&\le\mathrm{Const}_{\delta}\left( (u(z)-u(x))^2+(u(x)-u(y))^2+(u(y)-u(z))^2\right)\\
&\le\mathrm{Const}_{u,\delta,{{S}}_O} 
\left(|Oz|^{2\gamma_O-2}\cdot |zx|^2+|Oy|^{2\gamma_O-2}\cdot |xy|^2+|Oz|^{2\gamma_O-2}\cdot |yz|^2
\right)
\\
&\le \mathrm{Const}_{u,\delta,{{S}}_O} \cdot|Oz| ^{2\gamma_O-2}\cdot\mathrm{Area}(\Delta)\\
&\le \mathrm{Const}_{u,\delta,{{S}}_O}\cdot 
\mscomm{\int_{\Delta} {\rho}^{2\gamma_O-1}\, d\rho d\phi. }
\end{align*}
\mscomm{The latter inequality is straightforward for $\gamma_O\le 1$ and requires the following estimate for $\gamma_O>1$:
$$
|Oz| ^{2\gamma_O-2}\cdot\mathrm{Area}(\Delta)
= 16 |Oz| ^{2\gamma_O-2}\cdot\mathrm{Area}(\frac{1}{4}\Delta)
\le 16 \int_{\frac{1}{4}\Delta} {(2\rho)}^{2\gamma_O-2}\, \rho\, d\rho d\phi
\le 
2^{2\gamma_O+2}\int_{\Delta} {\rho}^{2\gamma_O-1}\, d\rho d\phi,
$$
where $\frac{1}{4}\Delta\subset\Delta$ is obtained from $\Delta$ by the homothety with the center $z$ and the coefficient $1/4$.
}
\end{proof}

\begin{lemma}\label{l-discrete-neighborhood-energy}
$
  E_{{{T}}_{O,h}}(u)\le
\mathrm{Const}_{u,\delta,{{S}}_O}\cdot h^{2\gamma_O}.
$
\end{lemma}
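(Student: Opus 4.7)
The plan is to apply Lemma~\ref{l-energy-distance2} to every face of ${{T}}_{O,h}$ and sum the resulting inequalities, exploiting the fact that ${{S}}_{O,h}$ is contained in a small disk around $O$ of radius comparable to $h$.

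First I would observe that by the definition of ${{T}}_{O,h}$, every face $\Delta\in{{T}}_{O,h}^2$ meets the $h$-neighborhood of $O$, so there is some point $p\in\Delta$ with $|Op|\le h$. Since the maximal edge length is at most $h$ and the triangle has diameter at most $h$, by the triangle inequality any other point $q\in\Delta$ satisfies $|Oq|\le|Op|+|pq|\le 2h$. Hence ${{S}}_{O,h}\subset\{w\in{{S}}_O:\rho(w)\le 2h\}$ and the faces of ${{T}}_{O,h}^2$ have pairwise disjoint interiors.

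Next I would apply Lemma~\ref{l-energy-distance2} to each $\Delta\in{{T}}_{O,h}^2$ and sum:
\begin{equation*}
E_{{{T}}_{O,h}}(u)=\sum_{\Delta\in{{T}}_{O,h}^2}E_{{{T}}_\Delta}(u)
\le \mathrm{Const}_{u,\delta,{{S}}_O}\sum_{\Delta\in{{T}}_{O,h}^2}\int_\Delta \rho^{2\gamma_O-1}\,d\rho d\phi
\le \mathrm{Const}_{u,\delta,{{S}}_O}\int_{{{S}}_{O,h}} \rho^{2\gamma_O-1}\,d\rho d\phi.
\end{equation*}
Enlarging the integration domain to the full sector $\{0\le\rho\le 2h,\ 0\le\phi\le 2\pi/\gamma_O\}$ (recall that in polar coordinates on the cone the angle runs over an interval of length $2\pi/\gamma_O$) and evaluating the integral,
\begin{equation*}
\int_{0}^{2\pi/\gamma_O}\!\!\!\int_0^{2h}\rho^{2\gamma_O-1}\,d\rho\,d\phi
=\frac{2\pi}{\gamma_O}\cdot\frac{(2h)^{2\gamma_O}}{2\gamma_O}
=\mathrm{Const}_{\gamma_O}\cdot h^{2\gamma_O},
\end{equation*}
which absorbs into the prefactor (since $\gamma_O$ is determined by the metric of ${{S}}_O$).

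There is essentially no obstacle here: the only point requiring care is to ensure that the $\rho$-integral converges at the origin, which is automatic because $2\gamma_O-1>-1$, and to verify that the enlarged disk really contains ${{S}}_{O,h}$ after unfolding the cone, which follows from the triangle inequality performed in the development of ${{S}}_O$ into a flat sector (used implicitly via geodesic distance $\rho$).
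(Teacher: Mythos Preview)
Your proof is correct and follows essentially the same approach as the paper: sum the inequalities of Lemma~\ref{l-energy-distance2} over all faces of ${{T}}_{O,h}$, enlarge the integration domain to the sector $\{0\le\rho\le 2h,\ 0\le\phi\le 2\pi/\gamma_O\}$, and evaluate the integral. The paper's write-up is more terse, but the steps and the estimates are the same.
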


\begin{proof}
\mscomm{Summing} the inequalities of Lemma~\ref{l-energy-distance2}, enlarging the integration domain, and evaluating the integral  we get
\begin{align*}
   E_{{{T}}_{O,h}}(u)&=
   \sum_{\Delta\in {{T}}_{O,h}^2}E_{{{T}}_\Delta}(u)\\
   &\le \mathrm{Const}_{u,\delta,{{S}}_O}\cdot 
   \int_{{{S}}_{O,h}} {\rho}^{2\gamma_O-1}\, d\rho d\phi\\
   &\le\mathrm{Const}_{u,\delta,{{S}}_O}\cdot
  \int_{0\le \rho\le 2h,0\le \phi\le 2\pi/\gamma_O} {\rho}^{2\gamma_O-1}\, d\rho d\phi\\
&\le\mathrm{Const}_{u,\delta,{{S}}_O}\cdot h^{2\gamma_O}.\\[-1.3cm]
\end{align*}
\end{proof}

\begin{example}\label{ex-counterexample}
The assumption on the minimal face angle in Lemma~\ref{l-energy-distance2} and Energy Convergence Lemma~\ref{l-energy-convergence} cannot be dropped. Indeed, take $0<\epsilon<1/2$,  $\gamma_O:=1/2-\epsilon$, and $u(\rho,\phi):=\rho^{\gamma_O}\cos(\gamma_O\phi)$. Consider the triangle $\Delta$ with the vertices $O$, $x:=(1/n, 0)$, and $y:=(1/n^{\epsilon},\pi/2)$. 
We have $\angle Oyx\to 0$ and
$E_{{{T}}_\Delta}(u)\ge 
\cot Oyx \cdot (u(x)-u(O))^2/2 = 
n^{1-\epsilon-2\gamma_O}/2 \to \infty$
as $n\to \infty$.
Let ${{T}}_O$ be an arbitrary triangulation of ${{S}}_O$ containing the triangle $\Delta$ and having maximal edge length $<2/n^{\epsilon}$. Then the inequalities of Lemmas~\ref{l-energy-distance2} and~\ref{l-energy-convergence} cannot hold.
\end{example}

\subsection{Convergence of energy in polyhedral surfaces}\label{ssec-energy-convergence}

Denote $\lambda_{{{S}}}(h):=
\begin{cases}
h, &\text{if }\gamma_{{S}}>1/2;\\
h|\log h|, &\text{if } \gamma_{{S}}=1/2;\\
h^{2\gamma_{{S}}}, &\text{if }\gamma_{{S}}<1/2.
\end{cases}$

\begin{proof}[Proof of  Energy Convergence Lemma~\ref{l-energy-convergence}] First take an auxiliary triangulation ${{T}}_1$ of the surface ${{S}}$ 
and set ${{S}}_z$ to be the union of faces containing a vertex $z\in {{T}}_1^0$.
Denote by $t{{S}}_{z}\subset {{S}}_{z}$ the set obtained from the cone ${{S}}_{z}$ by the homothety with the center $z\in {{T}}_1^0$ and the coefficient $0<t<1$.
Let $\mathrm{const}_{u,\delta,{{S}}}$
be $1/10$ of the minimal face height of ${{T}}_1$. 
Now take a triangulation ${{T}}$ of maximal edge length $h<\mathrm{const}_{u,\delta,{{S}}}$.
Let ${{S}}_z'\subset {{S}}_z$ be the union of faces of ${{T}}$ intersecting the set $\frac{8}{10}{{S}}_z$ \mscomm{and ${{T}}_z'$ be the restriction of ${{T}}$ to ${{S}}_z'$}. Then ${{S}}_z'\subset \frac{9}{10}{{S}}_z$ and $u_z\colon g_z(\frac{9}{10}{{S}}_z)\to \mathbb{R}$ smoothly extends to a neighborhood of $g_z(\frac{9}{10}{{S}}_z)$.
On the other hand, ${{S}}_z'\supset \frac{8}{10}{{S}}_z$ and thus  $\cup_{z\in {{T}}_1^0}{{S}}_z'\supset {{S}}$.
\mscomm{Replace the sets ${{S}}_z'$ and ${{T}}_z'$ by their appropriate subsets forming \emph{decompositions} of ${{S}}$ and ${{T}}$, respectively.}
 Estimating the energy separately in the $h$-neighborhoods of the vertices of ${{T}}_1$ and outside them by
Lemmas~\ref{l-continuous-neighborhood-energy}, \ref{l-discrete-neighborhood-energy}, \ref{l-outside-neighborhood-energy}, Remark~\ref{rem-smaller-polygon}, \mscomm{and the inequality $\gamma_{{S}}\le 1$} we get
\begin{align*}
|E_{{{T}}}(u\left|_{\widetilde{{{T}}}^0}\right.)- E_{{S}}(u)|
&\le
\sum_{z\in {{T}}^1_0}\left(
E_{{{T}}_{z,h}}(u)
+
E_{{{S}}_{z,h}}(u) + 
|E_{{{T}}_{z}'-{{T}}_{z,h}}(u)- 
E_{{{S}}_{z}'-{{S}}_{z,h}}(u)|\right)\\
&\le
\sum_{z\in {{T}}^0_1} \mathrm{Const}_{u,\delta,{{S}}_z}\cdot 
\begin{cases}
h, &\text{if }\gamma_z>1/2;\\
h|\log h|, &\text{if } \gamma_z=1/2;\\
h^{2\gamma_z}, &\text{if }\gamma_z<1/2.
\end{cases}
\\ 
&\le \mathrm{Const}_{u,\delta,{{S}}}\cdot
\lambda_{{{S}}}(h).
\\[-1.4cm]
 \end{align*}
\end{proof}





\begin{lemma} \label{l-hard-inequality} For any $P\in\mathbb{R}^{2g}$ we have
$E_{{T}}(u_{{{T}},P})\ge E_{{S}}(u_{{{S}},P})$.
\end{lemma}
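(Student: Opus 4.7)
The plan is to reduce the claimed inequality to a \emph{continuous} variational principle by means of a piecewise-linear interpolation.

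First, I would define the piecewise-linear interpolation $Iu_{{{T}},P}\colon \widetilde{{{S}}}\to\mathbb{R}$ of the multi-valued discrete function $u_{{{T}},P}$: on each face of $\widetilde{{{T}}}$, replace $u_{{{T}},P}$ by the unique affine function matching its three vertex values. Since linear functions on adjacent triangles agree along shared edges, this glues to a continuous multi-valued function on $\widetilde{{{S}}}$, which inherits the real periods $P$ because the deck transformations map faces to faces and preserve the affine structure of each face. Summing Interpolation Lemma~\ref{l-interpolation-1} over all faces converts discrete energy exactly to continuous Dirichlet energy:
$$
E_{{T}}(u_{{{T}},P})=\sum_{\Delta\in {{T}}^2}E_{{{T}}_\Delta}(u_{{{T}},P})=\sum_{\Delta\in {{T}}^2}E_{{{S}}_\Delta}(I_\Delta u_{{{T}},P})=E_{{S}}(Iu_{{{T}},P}).
$$

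Next I would invoke a continuous variational principle: among all finite-energy multi-valued functions on $\widetilde{{{S}}}$ whose periods have real parts equal to $P$, the unique minimizer of $E_{{S}}(\cdot)$ (up to an additive constant) is $u_{{{S}},P}=\mathrm{Re}\,\phi_{{{S}},P}$. This is the continuous analogue of Variational Principle~\ref{l-variational-principle5} and is proved by the standard orthogonality argument: writing any competitor as $u=u_{{{S}},P}+v$ with $v$ \emph{single-valued} on ${{S}}$, integration by parts gives $\int_{{S}}\nabla v\cdot\nabla u_{{{S}},P}\,dxdy=0$ (since $u_{{{S}},P}$ is harmonic away from the cone points), so that $E_{{S}}(u)=E_{{S}}(u_{{{S}},P})+E_{{S}}(v)\ge E_{{S}}(u_{{{S}},P})$. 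Applying this minimality to the competitor $Iu_{{{T}},P}$ yields
$$
E_{{T}}(u_{{{T}},P})=E_{{S}}(Iu_{{{T}},P})\ge E_{{S}}(u_{{{S}},P}),
$$
which is the desired inequality.

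The step I expect to be the main obstacle is the integration-by-parts justification at the conical singularities, since $\nabla u_{{{S}},P}$ blows up there. I would handle it by exhausting ${{S}}$ by the complement of small disks of radius $\rho$ around each cone point and showing that the corresponding boundary integrals $\int_{\partial}\! v\,(\partial_\nu u_{{{S}},P})\,ds$ vanish as $\rho\to0$. This follows from the local expansion $|\nabla u_{{{S}},P}|=O(\rho^{\gamma_O-1})$ near a cone point of aperture $2\pi/\gamma_O$ (the continuous counterpart of Derivative Estimation Lemma~\ref{l-derivatives}) combined with the boundedness of the single-valued piece $v$: the perimeter of such a small circle is $O(\rho)$, so the total boundary contribution is $O(\rho^{\gamma_O})\to 0$ because $\gamma_O>0$.
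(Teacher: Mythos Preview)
Your proposal is correct and follows essentially the same approach as the paper: interpolate $u_{{{T}},P}$ piecewise-linearly, use Interpolation Lemma~\ref{l-interpolation-1} to convert discrete energy to continuous Dirichlet energy, and then invoke the continuous variational principle to compare with $u_{{{S}},P}$. The paper's proof is a terse two sentences that simply cites ``the continuous counterpart of Variational Principle~\ref{l-variational-principle5}'' without further comment; your added justification of the integration-by-parts step near conical singularities via the $O(\rho^{\gamma_O})$ estimate is a welcome elaboration of a point the paper leaves implicit.
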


\begin{proof}
Let $I_{{T}} u_{{{T}},P}\colon \widetilde{{{S}}}\to\mathbb{R}$ be the linear interpolation of
$u_{{{T}},P}\colon \widetilde{{{T}}}^0\to\mathbb{R}$
 in each face of $\widetilde{{{T}}}$. By Interpolation Lemma~\ref{l-interpolation-1} we get $E_{{T}}(u_{{{T}},P})=E_{{S}}(I_{{T}} u_{{{T}},P})$ because the metric of the surface $\widetilde{{{S}}}$ may have  singularities only at the vertices of  $\widetilde{{{T}}}$. Since the function $I_{{T}} u_{{{T}},P}\colon \widetilde{{{S}}}\to\mathbb{R}$
is continuous by the continuous counterpart of Variational Principle~\ref{l-variational-principle5} the result follows.
\end{proof}


\begin{energy-convergence-lemma}\label{l-hard-convergence} 
For each  $\delta>0$ and each vector $P\in\mathbb{R}^{2g}$ 
there are constants $\mathrm{Const}_{P,\delta,{{S}}},\mathrm{const}_{P,\delta,{{S}}}>0$ 
such that for any triangulation ${{T}}$ of ${{S}}$  
with the maximal edge length $h<\mathrm{const}_{P,\delta,{{S}}}$ and the minimal face angle $>\delta$
we have
\begin{equation*}
|E_{{{T}}}(u_{{{T}},P})-E_{{S}}(u_{{{S}},P})|\le
\mathrm{Const}_{P,\delta,{{S}}}\cdot
\lambda_{{{S}}}(h).
\end{equation*}
\end{energy-convergence-lemma}

\begin{proof}[Proof of Energy Form Convergence Lemma~\ref{l-hard-convergence}] 
The lemma follows from the following sequence of estimates provided by Lemma~\ref{l-hard-inequality}, Variational Principle~\ref{l-variational-principle5}, and Lemma~\ref{l-energy-convergence}:
$$
0\le E_{{{T}}}(u_{{{T}},P})-E_{{S}}(u_{{{S}},P})\le E_{{{T}}}(u_{{{S}},P}\left|_{\widetilde{{{T}}}^0}\right.)-E_{{S}}(u_{{{S}},P})\le
\mathrm{Const}_{P,\delta,{{S}}}\cdot
\lambda_{{{S}}}(h).
\\[-0.9cm]
$$
\end{proof}

\begin{corollary}\label{l-bounded-energy}
Let $\{{{T}}_n\}$ be a nondegenerate uniform sequence of Delaunay triangulations of ${{S}}$ with maximal edge length approaching zero as $n\to\infty$.  Let $P_n\in\mathbb{R}^{2g}$ be a sequence of $2g$-dimensional real vectors converging to a vector $P\in\mathbb{R}^{2g}$.
Then $E_{{{T}}_n}(u_{{{T}}_n,P_n})\to E_{{{S}}}(u_{{{S}},P})$ as $n\to\infty$.
\end{corollary}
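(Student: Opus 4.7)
The plan is to reduce the statement to Energy Form Convergence Lemma~\ref{l-hard-convergence} by exploiting the quadratic-form structure provided by Energy Matrix Lemma~\ref{l-relation2} and Lemma~\ref{l-relation3}. First, I observe that property (A) of a nondegenerate uniform sequence gives a uniform lower bound $\delta>0$ on the minimal face angle of every ${{T}}_n$. Together with the assumption that the maximal edge length $h_n\to 0$, this means that for every fixed vector $Q\in\mathbb{R}^{2g}$ the hypotheses of Lemma~\ref{l-hard-convergence} are satisfied eventually in $n$, and so
$$E_{{{T}}_n}(u_{{{T}}_n,Q})\longrightarrow E_{{{S}}}(u_{{{S}},Q})\quad\text{as }n\to\infty.$$

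Next, Energy Matrix Lemma~\ref{l-relation2} and Lemma~\ref{l-relation3} identify the two sides as symmetric quadratic forms $Q\mapsto Q^{\mathrm T}E_{{{T}}_n}Q$ and $Q\mapsto Q^{\mathrm T}E_{{{S}}}Q$ on the finite-dimensional space $\mathbb{R}^{2g}$. Pointwise convergence of quadratic forms on a finite-dimensional space yields entrywise convergence of the representing symmetric matrices: evaluate on the standard basis vectors and on their pairwise sums and apply the polarization identity $2Q^{\mathrm T}MR=(Q+R)^{\mathrm T}M(Q+R)-Q^{\mathrm T}MQ-R^{\mathrm T}MR$. Therefore $E_{{{T}}_n}\to E_{{{S}}}$ as $2g\times 2g$ matrices.

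Finally, since $P_n\to P$ and $E_{{{T}}_n}\to E_{{{S}}}$, the joint continuity of the bilinear form $(M,Q)\mapsto Q^{\mathrm T}MQ$ gives
$$E_{{{T}}_n}(u_{{{T}}_n,P_n})=P_n^{\mathrm T}E_{{{T}}_n}P_n\longrightarrow P^{\mathrm T}E_{{{S}}}P=E_{{{S}}}(u_{{{S}},P}),$$
as required. I do not foresee a real obstacle here: the Delaunay condition (D) and the uniformity condition (U) are not used, only the angle bound (A) and $h_n\to 0$, and the argument is essentially linear-algebraic continuity on top of Lemma~\ref{l-hard-convergence}. The only small point to be careful about is that the constant $\mathrm{const}_{Q,\delta,{{S}}}$ in Lemma~\ref{l-hard-convergence} depends on $Q$, but this is harmless since $Q$ is held fixed while $n\to\infty$ in the first step.
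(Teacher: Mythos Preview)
Your proof is correct and follows essentially the same approach as the paper: invoke the quadratic-form structure from Energy Matrix Lemma~\ref{l-relation2} and Lemma~\ref{l-relation3}, then apply Energy Form Convergence Lemma~\ref{l-hard-convergence}. The paper's proof is a one-line remark to this effect, whereas you spell out the polarization and continuity steps explicitly; your observation that only condition~(A) and $h_n\to 0$ are actually used here is also accurate.
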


\begin{proof} Both $E_{{{T}}_n}(u_{{{T}}_n,P})$ and $E_{{{S}}}(u_{{{S}},P})$ are quadratic forms in  $P\in\mathbb{R}^{2g}$. Thus by Energy Form Convergence Lemma~\ref{l-hard-convergence} the result follows.
\end{proof}

\subsection{Convergence of period matrices}\label{ssec:Convergence of period matrices}

\begin{proof}[Proof of Convergence Theorem for Period Matrices~\ref{th-main}]
By Lemmas~\ref{l-relation2} and~\ref{l-relation3}
the energies $E_{{{T}}}(u_{{{T}},P})$ and $E_{{{S}}}(u_{{{S}},P})$ are quadratic forms in $P\in\mathbb{R}^{2g}$ with the block matrices 
$E_{{{T}}}$ and $E_{{{S}}}$, respectively.  
Thus by Energy Form Convergence Lemma~\ref{l-hard-convergence} for each 
$\delta>0$
there exist two constants $\mathrm{Const}_{\delta,{{S}}}, \mathrm{const}_{\delta,{{S}}}>0$ such that for any triangulation ${{T}}$ of ${{S}}$ with maximal edge length
$h<\mathrm{const}_{\delta,{{S}}}$ and the minimal face angle $>\delta$ we have
$\|E_{{{T}}}-E_{{S}}\|\le
\mathrm{Const}_{\delta,{{S}}}\cdot \lambda_{{{S}}}(h).
$
In particular, $\| (\Imaginary \Pi_{{{T}}^*})^{-1}-(\Imaginary \Pi_{{{S}}})^{-1}\|\le
\mathrm{Const}_{\delta,{{S}}}\cdot \lambda_{{{S}}}(h)$ for  $h<\mathrm{const}_{\delta,{{S}}}$. 
Hence there exist two new constants $\mathrm{Const}'_{\delta,{{S}}}, \mathrm{const}'_{\delta,{{S}}}>0$ 
such that $\| (\Imaginary \Pi_{{{T}}^*})\|\le\mathrm{Const}'_{\delta,{{S}}}$ for  $h<\mathrm{const}'_{\delta,{{S}}}$. Assume further that $h<\min\{\mathrm{const}'_{\delta,{{S}}},\mathrm{const}_{\delta,{{S}}}\}$.

By the inequality $\|E_{{{T}}}-E_{{S}}\|\le
\mathrm{Const}_{\delta,{{S}}}\cdot \lambda_{{{S}}}(h)$ and the properties of the norm
we have 
\begin{align*}
\mathrm{Const}_{\delta,{{S}}}\cdot \lambda_{{{S}}}(h) &\ge \| (\Imaginary \Pi_{{{T}}^*})^{-1}\Real\Pi_{{{T}}} -(\Imaginary \Pi_{{{S}}})^{-1}\Real\Pi_{{{S}}} \| \\
&=
\| (\Imaginary \Pi_{{{T}}^*})^{-1}
(\Real\Pi_{{{T}}}-\Real\Pi_{{{S}}}) -
\left( (\Imaginary \Pi_{{{S}}})^{-1} - (\Imaginary\Pi_{{{T}}^*})^{-1}\right)\Real\Pi_{{{S}}} \| \\
&\ge 
\| (\Imaginary \Pi_{{{T}}^*})\|^{-1} \cdot
\|\Real\Pi_{{{T}}}-\Real\Pi_{{{S}}}\| -
\| (\Imaginary \Pi_{{{S}}})^{-1} - (\Imaginary\Pi_{{{T}}^*})^{-1}\|\cdot 
\|\Real\Pi_{{{S}}} \| \\
&\ge
(\mathrm{Const}'_{\delta,{{S}}})^{-1} \cdot
\|\Real\Pi_{{{T}}}-\Real\Pi_{{{S}}}\| -
\mathrm{Const}_{\delta,{{S}}} \cdot \lambda_{{{S}}}(h) \cdot
\|\Real\Pi_{{{S}}} \|
\end{align*}
Thus $\|\Real\Pi_{{{T}}}-\Real\Pi_{{{S}}}\| \le \mathrm{Const}''_{\delta,{{S}}}\cdot \lambda_{{{S}}}(h)$, where $\mathrm{Const}''_{\delta,{{S}}}:=
\mathrm{Const}'_{\delta,{{S}}}\cdot \mathrm{Const}_{\delta,{{S}}}\cdot(1+\|\Real\Pi_{{{S}}} \|)$.

Finally, since $\|E_{{{T}}}-E_{{S}}\|\le
\mathrm{Const}_{\delta,{{S}}}\cdot \lambda_{{{S}}}(h)$
it follows that 
$$
\|\Real \Pi_{{{T}}^*}(\Imaginary \Pi_{{{T}}^*})^{-1}\Real\Pi_{{T}}+\Imaginary\Pi_{{T}}-
\Real \Pi_{{{S}}}(\Imaginary \Pi_{{{S}}})^{-1}\Real\Pi_{{S}}-\Imaginary\Pi_{{S}}\|
\le \mathrm{Const}_{\delta,{{S}}}\cdot \lambda_{{{S}}}(h).
$$
Using the result of the previous paragraph and similar estimates we conclude that there exist a new constant $\mathrm{Const}'''_{\delta,{{S}}}$ such that
$
\|\Real \Pi_{{{T}}^*}(\Imaginary \Pi_{{{T}}^*})^{-1}\Real\Pi_{{T}}-
\Real \Pi_{{{S}}}(\Imaginary \Pi_{{{S}}})^{-1}\Real\Pi_{{S}}\|
\le \mathrm{Const}'''_{\delta,{{S}}}\cdot \lambda_{{{S}}}(h).
$
Thus 
$\|\Imaginary\Pi_{{{T}}}-\Imaginary\Pi_{{{S}}}\| \le (\mathrm{Const}'''_{\delta,{{S}}}+
\mathrm{Const}_{\delta,{{S}}})\cdot \lambda_{{{S}}}(h)$, which completes the proof of the theorem.
\end{proof}

\section{Convergence of discrete Abelian integrals}\label{sec-integrals}

In this section we prove Convergence Theorem for Abelian Integrals~\ref{th-Convergence of Abelian integrals}. 
The proof uses lemmas stated in the next two subsections.


\subsection{Equicontinuity}

Consider a Delaunay triangulation \mscomm{${{T}}$} of a polyhedral surface with boundary \mscomm{such that all faces are flat triangles}. A function 
\mscomm{$u\colon {{T}}^0\to\mathbb{R}$}
is \emph{discrete harmonic}, if it satisfies equation~\eqref{eq-def-harmonic} at each nonboundary vertex~$z$. \mscomm{Denote $E'_{{{T}}}(u):=\sum_{xy\in {{T}}^1, xy\not\in\partial {{T}}}{{c}}(xy)(u(x)-u(y))^2$, where the sum is over nonboundary edges.}
The \emph{eccentricity} of the triangulation is the infimum of the numbers $\mathrm{Const}$ such that the triangulation satisfies conditions~(D) (for each nonboundary edge) and~(U) from Section~\ref{ssec:Convergence of discrete Abelian integrals}. 
Denote by $e$ and $h'$ the eccentricity and twice the maximal circumradius of faces of the triangulation, respectively. 
To simplify the proofs below we need the following technical notion. We say that the triangulation has \emph{regular boundary}, if it extends to a Delaunay triangulation of a closed polyhedral surface such that the $h'$-neighborhood of the boundary of the initial surface contains no new singular points.

\begin{equicontinuity-lemma}\label{cl-log-inequality}
Let ${{S}}_O$ be the part of a cone with the vertex $O$ and the aperture $2\pi/\gamma_O$ bounded by a piecewise-geodesic broken line. Let ${{T}}_O$ be a Delaunay triangulation of ${{S}}_O$ with regular boundary such that $O\in{{T}}^0_O$.
Let $u\colon {{T}}_O^0\to \mathbb{R}$ be a discrete harmonic function.
Let $z,w\in {{T}}_O^0$ be two vertices \mscomm{at the distance $|zw|\ge h'$}.
Take $r$ such that $3\gamma_O'|zw|<r<\mathrm{Dist}(zw,\partial {{S}}_O)$, where $\gamma_O'=\max\{\gamma_O^2,1/\gamma_O^2\}$.
Then there is a constant $\mathrm{Const}_{e,\gamma_O}$  
such that 
\begin{equation}\label{eq-log-inequality}
\left|u(z)-u(w)\right|\le
\mathrm{Const}_{e,\gamma_O}\cdot E\mscomm{'}_{\!{{T}}_O}(u)^{1/2}\cdot \log^{-1/2}\frac{r}{3\gamma_O'|zw|},
\end{equation}
For $|zw|< h'< r/3\gamma_O'$ the same inequality holds with $|zw|$ replaced by $h'$.
\end{equicontinuity-lemma}

\begin{proof} 
The lemma is proved analogously to a similar estimate for \emph{quadrilateral lattices} in the plane \cite[Equicontinuity Lemma~2.4]{A1-Skopenkov}. Actually 
one can derive the lemma from that estimate as follows.
The used notions and results can be found in \cite[\S1, Remarks~3.4 and~4.8]{A1-Skopenkov} and Section~\ref{sec:Delaunay--Voronoi quadrangulation} below.

For each face of ${{T}}_O$, draw $3$ circumradii from the circumcenter to the vertices; see Figure~\ref{fig:2triangles}. 
Since the triangulation ${{T}}_O$ has regular boundary, by Theorem~\ref{l-transformation} it follows that the drawn segments do not have common interior points.
Erase hanging edges from the obtained graph. We get an orthogonal quadrilateral lattice ${Q}$ \mscomm{(because ${{T}}$ must have more than one face)}. Its twice maximal edge length is $h'$ and its eccentricity 
is~$\mathrm{Const}_e$. 
Extend the function $u\colon {{T}}_O^0\to\mathbb{R}$ to $Q^0$  by identical zero at ${Q}^0-{{T}}_O^0$.  The obtained function ${Q}^0\to \mathbb{R}$ is discrete harmonic and has the energy $E\mscomm{'}_{\!{{T}}_O}(u)$. 

Let the map $q\colon {{S}}_O\to \mathbb{R}^2$ be given in polar coordinates by the formula 
$q\colon (\rho,\phi)\mapsto (\sqrt{\gamma_O'}\rho,\gamma_O\phi)$. 
Identify each face of $Q$ (respectively, its image under $q$) with a quadrilateral $z_1z_2z_3z_4\subset\mathbb{C}$ by an orientation-preserving isometry (respectively, composed with $q^{-1}$). This makes $q(Q)$ a quad-surface 
and $u\circ q^{-1}\colon q(Q^0)\to \mathbb{R}$ a {discrete harmonic function}. 
Since the map $q\colon {{S}}_O\to \mathbb{R}^2$ increases distances by a factor at most $\gamma'_O$ it follows that $q(Q)$ has twice the maximal edge length at most $\gamma_O' {h}'$ and the eccentricity 
at most $\mathrm{Const}_{\gamma_O,e}$. 

Draw the square $R$ of side length $r>3\gamma'_O|zw|\ge 3|q(z)-q(w)|$ with the center at the midpoint of the segment $q(z)q(w)$ and the sides parallel and orthogonal to $q(z)q(w)$. Since $r<\mathrm{Dist}(zw,\partial {{S}}_O)\le \mathrm{Dist}(q(z)q(w),q(\partial {{S}}_O))$ it follows that $R\cap q(\partial{{S}}_O)=\emptyset$. 
By
\cite[Equicontinuity Lemma~2.4, Remarks~4.8 and~4.9]{A1-Skopenkov}  the lemma follows.
\end{proof}

\subsection{Harmonicity of a uniform limit}

A sequence of triangulated polygons $\{{{T}}_n\}$ \emph{approximates} a domain $\Omega\subset\mathbb{C}$, if for $n\to\infty$:
\begin{itemize}
\item the maximal distance from a point of $\partial {{T}}_n$ to the set $\partial\Omega$ tends to zero;
\item the maximal distance from a point of $\partial\Omega$  to the set $\partial {{T}}_n$ tends to zero;
\item the maximal edge length of the triangulation ${{T}}_n$ tends to zero.
\end{itemize}

\begin{lemma} \label{cl-harmonic-limit}
Let $\{{{T}}_n\}$ be a nondegenerate uniform sequence of Delaunay triangulations of polygons with regular boundary approximating a domain $\Omega\subset\mathbb{C}$. Let $u_n\colon {{T}}^0_n\to\mathbb{R}$ be a sequence of discrete harmonic functions uniformly converging to a continuous function $u\colon \Omega\to\mathbb{R}$. Then the function $u\colon \Omega\to\mathbb{R}$ is harmonic.
\end{lemma}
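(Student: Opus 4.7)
The strategy is to prove that $u$ is weakly harmonic in $\Omega$ and then invoke Weyl's lemma. Fix $\phi\in C_c^{\infty}(\Omega)$; for $n$ sufficiently large $\mathrm{supp}\,\phi$ lies well inside $T_n$, so $\phi|_{T_n^0}$ vanishes near $\partial T_n$. Multiplying the discrete harmonicity relation~\eqref{eq-def-harmonic} at each interior vertex $z$ by $\phi(z)$ and reorganising the resulting double sum over edges produces the discrete Green identity
\[\sum_{xy\in T_n^1} c(xy)\bigl(u_n(x)-u_n(y)\bigr)\bigl(\phi(x)-\phi(y)\bigr)=0.\]
Polarising the Interpolation Lemma~\ref{l-interpolation-1} triangle by triangle, the left-hand side equals $\int_{\Omega}\nabla I_{T_n} u_n\cdot\nabla I_{T_n}\phi\,dA$, where $I_{T_n}$ denotes piecewise linear interpolation on the faces of $T_n$. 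For smooth $\phi$ standard finite-element estimates give $\nabla I_{T_n}\phi\to\nabla\phi$ strongly in $L^2(\Omega)$, so it remains to show that the weak $L^{2}_{\mathrm{loc}}$-limit of $\nabla I_{T_n} u_n$ is $\nabla u$.

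The principal technical step is a Caccioppoli-type local energy bound. Pick compact sets $K\Subset K'\Subset\Omega$ and a smooth cut-off $\eta\in C_c^{\infty}(K')$ with $\eta\equiv1$ on $K$. For $n$ large $\eta^2 u_n$ vanishes on $\partial T_n^0$, so testing discrete harmonicity of $u_n$ against $\eta^2 u_n$ yields
\[\sum_{xy\in T_n^1} c(xy)\bigl(\eta(x)^2 u_n(x)-\eta(y)^2 u_n(y)\bigr)\bigl(u_n(x)-u_n(y)\bigr)=0.\]
The product decomposes algebraically as
\[\tfrac{\eta(x)^2+\eta(y)^2}{2}\bigl(u_n(x)-u_n(y)\bigr)^{2}+\tfrac{(\eta(x)-\eta(y))(\eta(x)+\eta(y))}{2}\bigl(u_n(x)-u_n(y)\bigr)\bigl(u_n(x)+u_n(y)\bigr).\]
Exploiting the nonnegativity of the cotan weights forced by the Delaunay hypothesis, a Cauchy--Schwarz/AM--GM step together with the $L^{\infty}$-bound $|u_n|\le M$ coming from uniform convergence absorbs the cross-term into a fixed fraction of the quadratic term, leaving
\[\sum_{xy\in T_n^1} c(xy)\bigl(\eta(x)^{2}+\eta(y)^{2}\bigr)\bigl(u_n(x)-u_n(y)\bigr)^{2}\;\le\;C\,M^{2}\,E_{T_n}(\eta).\]
Energy Approximation Lemma~\ref{l-energy-approximation} applied face by face shows $E_{T_n}(\eta)\to\int_{\Omega}|\nabla\eta|^{2}\,dA$ and is therefore uniformly bounded in $n$. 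Restricting the sum to edges with both endpoints in $K$, where $\eta\equiv1$, and reusing the Interpolation Lemma gives $\int_K|\nabla I_{T_n} u_n|^{2}\,dA\le C$ independent of $n$.

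With this local $H^{1}$-bound the proof concludes as follows. From the $L^{\infty}_{\mathrm{loc}}$- and $H^{1}_{\mathrm{loc}}$-boundedness of $I_{T_n} u_n$, every subsequence admits a further $H^{1}_{\mathrm{loc}}$-weak limit, which by uniform convergence must coincide with $u$; so the full sequence satisfies $\nabla I_{T_n} u_n\rightharpoonup\nabla u$ in $L^{2}_{\mathrm{loc}}(\Omega)$. Pairing this weak convergence with the strong $L^{2}$-convergence of $\nabla I_{T_n}\phi$ in the zero identity of the first paragraph gives $\int_{\Omega}\nabla u\cdot\nabla\phi\,dA=0$ for every $\phi\in C_c^{\infty}(\Omega)$, so $u$ is weakly harmonic and Weyl's lemma finishes the proof. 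I expect the Caccioppoli step to be the main obstacle: without the nonnegativity of the cotangent weights supplied by the Delaunay condition the algebraic absorption of the cross-term breaks down, and without the resulting local energy bound there is no way to pass to the limit inside the discrete Green identity.
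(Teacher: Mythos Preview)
Your argument is correct and takes a genuinely different route from the paper. The paper does not give a self-contained proof at all: it simply observes that the Delaunay--Voronoi quadrangulation of Theorem~\ref{l-transformation} turns the triangulation into an orthogonal quadrilateral lattice with the same discrete harmonic functions and the same energy, and then quotes the corresponding quadrilateral-lattice result \cite[Lemma~4.13]{A1-Skopenkov}. Your approach, by contrast, is the classical finite-element/PDE one --- discrete Green identity, Caccioppoli inequality, weak $H^1_{\mathrm{loc}}$-compactness, and Weyl's lemma --- and is entirely self-contained within the paper's own toolbox (Interpolation Lemma, Gradient/Energy Approximation Lemmas). A pleasant by-product is that you never use the ``regular boundary'' hypothesis, which the paper needs only to make the quadrangulation reduction go through near $\partial T_n$.

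One small point worth tightening: the last step of the Caccioppoli paragraph, where you pass from
\[
\sum_{xy:\,x,y\in K} c(xy)\bigl(u_n(x)-u_n(y)\bigr)^2 \le C
\]
to $\int_K |\nabla I_{T_n}u_n|^2\le C$ ``by reusing the Interpolation Lemma'', is not quite a direct application. Summing $E_{T_\Delta}(u_n)=\int_\Delta|\nabla I_\Delta u_n|^2$ over the triangles meeting $K$ produces, besides the interior-edge contributions $c(e)(u_n(x)-u_n(y))^2$, half-cotangent boundary terms which may be negative. To close the gap you need both hypotheses (A) and (D) of ``nondegenerate uniform'': condition (A) gives $|\cot(\text{angle})|\le\cot\delta$, so $E_{T_\Delta}(u_n)\le\tfrac{\cot\delta}{2}\sum_{\text{edges of }\Delta}(u_n(x)-u_n(y))^2$, and condition (D) gives a uniform lower bound $c(e)\ge c_0>0$, so that the unweighted edge sum is controlled by the $c$-weighted one you already bounded. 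With this said explicitly, the argument is complete.
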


This lemma is proved analogously to a similar result for quadrilateral lattices \cite[Lemma~4.13 and the remark at the end of \S4.5]{A1-Skopenkov}. Actually it immediately reduces to that result analogously to the second paragraph of the proof of Equicontinuity Lemma~\ref{cl-log-inequality} above. 

\subsection{Convergence of multi-valued discrete harmonic functions} \label{ssec:Convergence of discrete harmonic functions}


We say that the function $u_{{{T}},P}\colon\widetilde {{{T}}}^0\to\mathbb{R}$ (respectively, $u_{{{S}},P}\colon\widetilde {{{S}}}\to\mathbb{R}$) 
is \emph{normalized} at a point $z$ if $u_{{{T}},P}(z)=0$
(respectively, $u_{{{S}},P}(z)=0$).


\begin{convergence-multi-valued-discrete-harmonic- functions}
\label{th-Convergence of Abelian integrals of the 1st kind}
Let $\{{{T}}_n\}$ be a nondegenerate uniform sequence of Delaunay triangulations of ${{S}}$ with maximal edge length approaching zero as $n\to\infty$. Let $z_n\in\widetilde{{{T}}}_n^0$ be a sequence of vertices converging to a point $z_0\in\widetilde{{{S}}}$.  Let $P_n\in\mathbb{R}^{2g}$ be a sequence of 
vectors converging to a vector $P\in\mathbb{R}^{2g}$.
Then the function $u_{{{T}}_n,P_n}\colon \widetilde{{{T}}}_n^0\to\mathbb{R}$  normalized at $z_n$ converges to  $u_{{{S}},P}\colon \widetilde{{{S}}}\to\mathbb{R}$ normalized at $z_0$
uniformly on each compact subset.
\end{convergence-multi-valued-discrete-harmonic- functions}

The proof goes along the lines of the proof of \cite[Convergence Theorem~1.2]{A1-Skopenkov} with several improvements required by the presence of curvature. We need an auxiliary lemma.

\begin{lemma}\label{l-energy-bound3} 
Let ${{T}}$ be a triangulation of a polyhedral surface with boundary such that all face angles are $>\delta$. Then 
for each $u\colon {{T}}^0\to\mathbb{R}$ we have 
$
E'_{{{T}}}(u)\le \mathrm{Const}_\delta\cdot E_{{{T}}}(u).
$
\end{lemma}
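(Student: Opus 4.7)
The plan is to reduce the inequality to a triangle-by-triangle bound via Interpolation Lemma~\ref{l-interpolation-1}. First I would write, for each triangle $\Delta=xyz\in{{T}}^2$ with angles $\alpha_{yz},\alpha_{zx},\alpha_{xy}$ opposite the corresponding edges, the identity
\[
E_{{{T}}_\Delta}(u)=\tfrac12\cot\alpha_{yz}(u(y)-u(z))^2+\tfrac12\cot\alpha_{zx}(u(z)-u(x))^2+\tfrac12\cot\alpha_{xy}(u(x)-u(y))^2=E_{{{S}}_\Delta}(I_\Delta u),
\]
which identifies the contribution of $\Delta$ to $E_{{{T}}}(u)$ with the nonnegative quantity $E_{{{S}}_\Delta}(I_\Delta u)$. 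Regrouping the sum defining $E'_{{{T}}}(u)$ by triangles, each triangle $\Delta$ contributes at most three terms of the form $\tfrac12\cot\alpha^\Delta_e(u(x_e)-u(y_e))^2$, where $\alpha^\Delta_e$ is the angle of $\Delta$ opposite the interior edge $e$.

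The main step is then to bound each such per-triangle per-edge term by $\mathrm{Const}_\delta\cdot E_{{{S}}_\Delta}(I_\Delta u)$. Since every face angle lies in $(\delta,\pi-2\delta)$, we have $|\cot\alpha^\Delta_e|\le\cot\delta$. Because $I_\Delta u$ is linear and interpolates $u$ at the vertices, $u(x_e)-u(y_e)=\nabla I_\Delta u\cdot(x_e-y_e)$, so $(u(x_e)-u(y_e))^2\le|\nabla I_\Delta u|^2\cdot|x_ey_e|^2$. Finally, Lemma~\ref{l-min-angle-estimate} gives $|x_ey_e|^2\le\mathrm{Const}_\delta\cdot\Area(\Delta)$. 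Multiplying these three estimates yields
\[
\tfrac12|\cot\alpha^\Delta_e|(u(x_e)-u(y_e))^2\le\mathrm{Const}_\delta\cdot|\nabla I_\Delta u|^2\cdot\Area(\Delta)=\mathrm{Const}_\delta\cdot E_{{{S}}_\Delta}(I_\Delta u).
\]
Summing over the at most three interior edges of each $\Delta$ and then over all $\Delta\in{{T}}^2$ would give $E'_{{{T}}}(u)\le\mathrm{Const}_\delta\cdot\sum_{\Delta\in{{T}}^2}E_{{{S}}_\Delta}(I_\Delta u)=\mathrm{Const}_\delta\cdot E_{{{T}}}(u)$.

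No serious obstacle is expected; the argument is essentially a verification built from three elementary estimates. The only subtle point worth flagging is that individual cotangent weights ${c}(e)$ can be negative, so one cannot hope to bound $E'_{{{T}}}(u)$ edge-by-edge against $E_{{{T}}}(u)$ directly. The per-triangle regrouping is essential precisely because Interpolation Lemma~\ref{l-interpolation-1} packages each triangle's contribution as the nonnegative Dirichlet energy of the linear interpolant, against which the individual interior-edge terms can then be dominated.
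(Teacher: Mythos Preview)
Your proof is correct and follows essentially the same route as the paper: the key estimate $\tfrac12|\cot\alpha^\Delta_e|(u(x_e)-u(y_e))^2\le\mathrm{Const}_\delta\cdot E_{{{S}}_\Delta}(I_\Delta u)$, obtained from Interpolation Lemma~\ref{l-interpolation-1} together with Lemma~\ref{l-min-angle-estimate}, is exactly what the paper proves. The only cosmetic difference is that the paper bounds just the excess $E'_{{{T}}}(u)-E_{{{T}}}(u)$, which is supported on boundary edges with obtuse opposite angle (the only edges where $c(e)<0$), whereas you bound every interior-edge half-cotangent term uniformly; both lead to the same conclusion.
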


\begin{proof} Take a face $\Delta=xyz\in {{T}}^2$ such that $xz\subset\partial {{T}}$ and $\angle xyz>\pi/2$. By Lemmas~\ref{l-interpolation-1} and~\ref{l-min-angle-estimate} we get
$$
\frac{1}{2}|\cot xyz|\cdot \left(u(x)-u(z)\right)^2
\le\mathrm{Const}_\delta \cdot |\nabla I_{\Delta}u|^2 \cdot |xz|^2
\le\mathrm{Const}_\delta \cdot E_{{{T}}_\Delta}(u).
$$
Summing these inequalities over all such faces $\Delta$ we get 
$E'_{{{T}}}(u)-E_{{{T}}}(u)\le \mathrm{Const}_\delta\cdot E_{{{T}}}(u)$. 
\end{proof}

\begin{proof}[Proof of Theorem~\ref{th-Convergence of Abelian integrals of the 1st kind}] 
%
Take an arbitrary subsequence $\{\widetilde{{{T}}}_{n_k}\}$ of the given sequence $\{\widetilde{{{T}}}_n\}$. For brevity denote $\widetilde{{{T}}}_k:=\widetilde{{{T}}}_{n_k}$. Take a sequence of compact sets $K_1\subset K_2\subset\dots \subset \widetilde{{{S}}}$ such that $\widetilde{{{S}}}=\cup_{j=1}^\infty K_j$. Assume that $K_1$ contains all the points of the converging sequence $\{z_k\}$.
Since the sequence $\{{{T}}_{n}\}$ is nondegenerate uniform it follows that the face angles are greater than some constant $\delta>0$. 


We start with individual estimates near each vertex $v$ of the first triangulation $\widetilde{{{T}}}_1$.
Let ${{S}}_v$ be the union of faces of  $\widetilde{{{T}}}_1$ containing a vertex $v\in \widetilde{{{T}}}_1^0$.
Denote by $10r$ the minimal face height of the triangulation ${{{T}}}_1$. 
Take $k_1$ such that for each $k>k_1$ the maximal edge length of $\widetilde{{{T}}}_k$ is less than $r\csc\delta /\gamma_v'$. Take $k>k_1$.
Let ${{S}}_k'$ be the union of all the faces
of the triangulation $\widetilde{{{T}}}_k$ intersecting the set $\frac{8}{10}{{S}}_{v}$. Then ${{S}}_k'\subset \frac{9}{10}{{S}}_{v}$ and thus ${{S}}_k'$ has regular boundary.
Let ${{T}}_k'$ be the restriction of the triangulation  $\widetilde{{{T}}}_k$ to ${{S}}_k'$.
Let us estimate the right-hand side of inequality~\eqref{eq-log-inequality} from Equicontinuity Lemma~\ref{cl-log-inequality} for  $u:=u_{{{T}}_k,P_k}\left|_{({{T}}_k')^0}\right.$ and  $z,w\in \frac{7}{10}{{S}}_{v}$. 
By Lemma~\ref{l-energy-bound3} and Corollary~\ref{l-bounded-energy} the sequence of energies $E\mscomm{'}_{\!{{T}}_k}(u_{{{T}}_k,P_k})$ is bounded.
Thus by Equicontinuity Lemma~\ref{cl-log-inequality} it follows that 
the function $u_{{{T}}_k,P_k}\left|_{\frac{7}{10}{{S}}_{v}\cap\widetilde{{{T}}}_k^0}\right.$ has \emph{uniformly bounded differences}, i.e., there is a constant $\mathrm{Const}_{{{S}},\{{{T}}_n\}}$ not depending on $k$, $z$, $w$ such that for each $k>k_1$ and $z,w\in \frac{7}{10}{{S}}_{v}\cap\widetilde{{{T}}}^0_k$ we have
$|u_{{{T}}_k,P_k}(z)-u_{{{T}}_k,P_k}(w)|<\mathrm{Const}_{{{S}},\{{{T}}_n\}}$. By the same lemma the sequence $\{u_{{{T}}_k,P_k}\left|_{\frac{7}{10}{{S}}_{v}\cap\widetilde{{{T}}}_k^0}\right.\}$ is \emph{equicontinuous},
i.~e., there is a positive function $\delta(\epsilon)$ not depending on $k$, $z$, $w$ such that for each $k>k_1$ and any points $z,w\in \frac{7}{10}{{S}}_{v} \cap\widetilde{{{T}}}^0_k$ \mscomm{at the distance $|zw|<\delta(\epsilon)$} we have $|u_{{{T}}_k,P_k}(z)-u_{{{T}}_k,P_k}(w)|<\epsilon$. 


Let us combine the obtained estimates. Since the compact set $K_1$ is contained in the union of finitely many sets of the form $\frac{7}{10}{{S}}_{v}$ it follows that the sequence
$\{u_{{{T}}_k,P_k}\left|_{K_1\cap\widetilde{{{T}}}_k^0}\right.\}$ also has uniformly bounded differences \mscomm{(with the bound depending also on $K_1$)} and is equicontinuous. Moreover, the sequence 
is uniformly bounded, because all $z_k\in K_1$ and $u_{{{T}}_k,P_k}(z_k)=0$.
Then by the Arzel\`a--Ascoli theorem 
it follows that there is a continuous function $u_1\colon K_1 \to \mathbb{R}$ and a subsequence $\{l_k\}$ of the sequence $1,2,\dots $ such that $l_1=k_1$ and $u_{{{T}}_l,P_l}$ converges to $u_1$ uniformly in $K_1$.

Proceed to the next compact set $K_2$. Analogously, there is a \mscomm{continuous} function $u_{2}\colon K_2 \to \mathbb{R}$ and a subsequence $\{m_k\}$ of the sequence $\{l_k\}$ such that $m_1=l_1$, $m_2=l_2$, and
$u_{{{T}}_m,P_m}$ converges to $u_{2}$ uniformly on $K_2$. Clearly, $u_1=u_{2}$ on $K_1$. Thus the extension can be continued, and eventually we get a continuous function $u\colon \widetilde{{{S}}} \to \mathbb{R}$ and a subsequence $\{p_k\}$ of the sequence $1,2,\dots$ such that
$u_{{{T}}_p,P_p}$ converges to $u$ uniformly on each compact subset of $\widetilde{{{S}}}$.

Clearly, $u\colon \widetilde{{{S}}}\to\mathbb{R}$ has the same periods $P$ as $u_{{{S}},P}$, and $u(z_0)=0$. 
For each edge $e\in \widetilde{{{T}}}_1^1$ denote by ${{S}}_e\subset\widetilde{{{S}}}$ the union of the edge $e$ (without the endpoints) and the interiors of the two faces of  $\widetilde{{{T}}}_1$ containing the edge $e$. Then each set ${{S}}_e$ is isometric to a planar domain.
By Lemma~\ref{cl-harmonic-limit} applied to each domain ${{S}}_e$
it follows that the limit function $u\colon \widetilde{{{S}}}\to\mathbb{R}$ is harmonic in $\widetilde{{{S}}}$ possibly except the vertices of the triangulation $\widetilde{{{T}}}_1$ (where the metric of $\widetilde{{{S}}}$ may have conical singularities). By the singularity removal theorem it follows that the continuous function $u\colon \widetilde{{{S}}}\to\mathbb{R}$ is harmonic in the whole surface $\widetilde{{{S}}}$.
Thus $u=u_{{{S}},P}$ normalized at $z_0$.

Since the limit function $u$ is unique, it follows that the initial sequence $u_{{{T}}_n,P_n}\colon \widetilde{{{T}}}^0_n\to\mathbb{R}$, not just the subsequence $u_{{{T}}_k,P_k}$, converges to $u_{{{S}},P}$ uniformly on each compact subset.
\end{proof}

\subsection{Convergence of discrete Abelian integrals of the first kind}\label{ssec:Convergence of discrete Abelian integrals of the first kind}

\begin{proof}[Proof of Convergence Theorem for Abelian Integrals~\ref{th-Convergence of Abelian integrals}]
Let $P_n,P\in\mathbb{R}^{2g}$ be the periods of the real parts $\mathrm{Re}\,\phi^l_{{{T}}_n}\colon \widetilde{{{T}}}_n^0\to\mathbb{R}$ and
$\mathrm{Re}\,\phi^l_{{{S}}}\colon \widetilde{{{S}}}\to\mathbb{R}$, respectively.
Then by Second Existence and Uniqueness Theorem~\ref{th-fund3} it follows that $\mathrm{Re}\,\phi^l_{{{T}}_n}=u_{{{T}}_n,P_n}$
and $\mathrm{Re}\,\phi^l_{{{S}}}=u_{{{S}},P}$.
By Convergence Theorem for Period Matrices~\ref{th-main} we have
$P_n\to P$ as $n\to\infty$. Thus by Theorem~\ref{th-Convergence of Abelian integrals of the 1st kind}
the real parts $\mathrm{Re}\,\phi^l_{{{T}}_n}\colon \widetilde{{{T}}}_n^0\to\mathbb{R}$ converge to
$\mathrm{Re}\,\phi^l_{{{S}}}\colon \widetilde{{{S}}}\to\mathbb{R}$ uniformly on each compact subset.  Convergence of the imaginary parts is proved analogously using Conjugate Functions Principle~\ref{th-energy-conjugation}.
\end{proof}

\section{Discrete Riemann--Roch theorem}\label{sec-riemann-roch}


\subsection{Discrete Abelian integrals of the second kind}

A \emph{discrete Abelian integral of the second kind} is an arbitrary multi-valued function $f=(\mathrm{Re}f\colon \widetilde{{{T}}}^0\to\mathbb{R},\mathrm{Im}f\colon \widetilde{{{T}}}^2\to\mathbb{R})$. If it does not satisfy equation~\eqref{eq-def-analytic2} for an edge $e\in \vec{{{T}}}^1$ then we say that $f$ has a \emph{simple pole} at the edge $e$. The value
$$
\mathrm{res}_{e}\, f:=\mathrm{Im}f(r_e)-\mathrm{Im}f(l_e)+\mscomm{{c}}(e)\mathrm{Re}f(h_e)-\mscomm{{c}}(e)\mathrm{Re}f(t_e)
$$
is called the \emph{residue} of $f$.
In the case when all the periods vanish then the discrete Abelian integral of the second kind is called a \emph{discrete meromorhic function}; it can be viewed as a pair of single-valued functions $(\mathrm{Re}f\colon  {{T}}^0\to\mathbb{R},\mathrm{Im}f\colon  {{T}}^2\to\mathbb{R})$.
In questions concerning continuous limit one should assume that under triangulation refinement the number of poles of discrete meromorphic functions is fixed.

\begin{3rd-existence-and-uniqueness-theorem}\label{cor-uniqueness-second-kind2}
For each edge $e\in \vec{{{T}}}^1$  there is a unique (up to constant) discrete Abelian integral $\phi_{e}=(\mathrm{Re}\,\phi_{e}\colon \widetilde{{{T}}}^0\to\mathbb{R},\mathrm{Im}\,\phi_{e}\colon \widetilde{{{T}}}^2\to\mathbb{R})$ of the second kind with vanishing A-periods, with the only pole at the edge $e$, and with $\mathrm{res}_{e}\, \phi_{e}=1$.
\end{3rd-existence-and-uniqueness-theorem}

This theorem is proved analogously to First Existence and Uniqueness Theorem~\ref{th-fund}. The only difference is that the value $\mathrm{res}_{e}\, \phi_{e}$ should be considered as a parameter instead of the values of 
A-periods.

\begin{lemma}\label{cor-B-periods2} 
The B-periods of $\phi_{e}=(\mathrm{Re}\,\phi_{e}\colon \widetilde{{{T}}}^0\to\mathbb{R},\mathrm{Im}\,\phi_{e}\colon \widetilde{{{T}}}^2\to\mathbb{R})$ are 
$$
B_l=
\mathrm{Re}\,\phi_\mathcal{T^*}^{l}(t_e)- \mathrm{Re}\,\phi_\mathcal{T^*}^{l}(h_e) +i\mathrm{Re}\,\phi_{{T}}^l(t_e)-i\mathrm{Re}\,\phi_{{T}}^l(h_e),
\text{ where } l=1,\dots,g.
$$
\end{lemma}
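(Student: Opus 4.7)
My plan is to extract both the real and imaginary parts of each $B_l$ by pairing the polar integral $\phi_e$ against the integrals of the first kind $\phi^l_{{{T}}}$ and $\phi^l_{{{T}}^*}$ via the Riemann Bilinear Identity~\ref{th-Riemann2}, exactly as was done in Lemma~\ref{cl-symmetry} and Period Matrix Lemma~\ref{l-properties-of-period-matrix2}. The new ingredient is that $\phi_e$ fails to satisfy the conjugacy equation~\eqref{eq-def-analytic2} at the edge $e$, and the defect is exactly the residue $\mathrm{res}_e\phi_e=1$. This will produce a single extra boundary term in the bilinear sum, which will be the sole surviving contribution.

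To compute $\Imaginary B_l$, apply Riemann Bilinear Identity~\ref{th-Riemann2} to the pair $u=\mathrm{Re}\,\phi^l_{{{T}}}$ and $u'=\mathrm{Im}\,\phi_e$. The multi-valued function $u$ has periods $(\delta_{kl},\Real(\Pi_{{T}})_{kl})$, while $u'$ has periods $(0,\Imaginary B_k)$, so the right-hand side of the identity collapses to $\Imaginary B_l$. On the left-hand side, I rewrite each factor $u'(l_{e'})-u'(r_{e'})$ using~\eqref{eq-def-analytic2} for every edge $e'\ne e$ and using the residue definition at $e'=e$; this replaces the left-hand side by
\[
\sum_{e'\in {{T}}^1} {c}(e')(\Real\phi_e(h_{e'})-\Real\phi_e(t_{e'}))(\Real\phi^l_{{T}}(h_{e'})-\Real\phi^l_{{T}}(t_{e'}))
-(\Real\phi^l_{{T}}(h_e)-\Real\phi^l_{{T}}(t_e)).
\]
Now apply Riemann Bilinear Identity~\ref{th-Riemann2} a second time, to the pair $u=\Real\phi_e$ and $u'=\Imaginary\phi^l_{{T}}$. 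Both functions have vanishing A-periods, so the right-hand side vanishes; and since $\phi^l_{{T}}$ has no pole, equation~\eqref{eq-def-analytic2} lets me rewrite the left-hand side as exactly the same symmetric bilinear sum over edges. Hence that sum equals zero, and we obtain $\Imaginary B_l=\Real\phi^l_{{{T}}}(t_e)-\Real\phi^l_{{{T}}}(h_e)$.

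For $\Real B_l$, I repeat the same two-step calculation, but now pairing $\phi_e$ with $\phi^l_{{{T}}^*}$ instead of $\phi^l_{{{T}}}$. The A-periods of $\phi^l_{{{T}}^*}$ are $i\delta_{kl}$ and its B-periods are $i(\Pi_{{{T}}^*})_{kl}$, so the real parts of the periods of $\Real\phi^l_{{{T}}^*}$ are $(0,-\Imaginary(\Pi_{{{T}}^*})_{kl})$, and the real parts of the periods of $\Imaginary\phi^l_{{{T}}^*}$ are $(\delta_{kl},\Real(\Pi_{{{T}}^*})_{kl})$. Running the analogous argument with the roles interchanged, the nontrivial right-hand side becomes $-\Real B_l$, the symmetric bilinear edge sum again cancels, and what remains is $-\Real B_l = \Real\phi^l_{{{T}}^*}(h_e)-\Real\phi^l_{{{T}}^*}(t_e)$.

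Assembling $B_l=\Real B_l+i\,\Imaginary B_l$ yields the claimed formula. The only genuine difficulty here is keeping straight the four relevant period vectors (real and imaginary parts of A- and B-periods of $\phi^l_{{{T}}}$ and $\phi^l_{{{T}}^*}$) and choosing the Riemann Bilinear pairings so that the RHS isolates one scalar while the symmetric edge sum drops out; no new analytical input is required beyond Theorem~\ref{th-Riemann2}, equation~\eqref{eq-def-analytic2}, and the residue definition.
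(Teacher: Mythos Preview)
Your argument is correct and is essentially the same as the paper's: apply the Riemann Bilinear Identity~\ref{th-Riemann2} to both pairings $(\mathrm{Re}\,\phi^l_{{{T}}},\mathrm{Im}\,\phi_e)$ and $(\mathrm{Re}\,\phi_e,\mathrm{Im}\,\phi^l_{{{T}}})$, convert the edge sums via~\eqref{eq-def-analytic2}, and observe that the symmetric bilinear pieces cancel while the single residue term survives. The paper simply combines the two applications into one line by writing the difference of the two Riemann Bilinear right-hand sides directly and then collapsing the edge sums to $\sum_f\mathrm{res}_f\phi_e\cdot(\mathrm{Re}\,\phi^l_{{{T}}}(t_f)-\mathrm{Re}\,\phi^l_{{{T}}}(h_f))$; your version just makes the two applications explicit.
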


\begin{proof} 
Let $A_1,\dots,A_g,B_1,\dots,B_g$ and $A_1',\dots,A_g',B_1',\dots,B_g'$  be the periods of $\phi_{e}$ and $\phi_{{T}}^{l}$, respectively.
All the A-periods $A_1,\dots,A_g,A_1',\dots,A_g'$ vanish except for $A'_l=1$. Using Riemann Bilinear Identity~\ref{th-Riemann2} and formula~\eqref{eq-def-analytic2}  we get
\begin{align*}
\mathrm{Im} B_l 
&=
\sum_{k=1}^g
(\mathrm{Re}A_k'\mathrm{Im}B_k-
\mathrm{Re}B_k'\mathrm{Im}A_k-
\mathrm{Re}A_k\mathrm{Im}B_k'+
\mathrm{Re}B_k\mathrm{Im}A_k')\\
&= \sum_{f\in{{T}}^1}
(\mathrm{Im}\,\phi_{e}(l_f)-\mathrm{Im}\,\phi_{e}(r_f))
(\mathrm{Re}\,\phi_{{T}}^{l}(h_f)-\mathrm{Re}\,\phi_{{T}}^{l}(t_f))\\
&-\sum_{f\in{{T}}^1}(\mathrm{Im}\,\phi_{{T}}^{l}(l_f)-\mathrm{Im}\,\phi_{{T}}^{l}(r_f))
(\mathrm{Re}\,\phi_{e}(h_f)-\mathrm{Re}\,\phi_{e}(t_f)) \\
&=\sum_{f\in{{T}}^1}\mathrm{res}_f \phi_{e}\cdot (\mathrm{Re}\,\phi_{{T}}^{l}(t_f)-\mathrm{Re}\,\phi_{{T}}^{l}(h_f))\\
&=\mathrm{Re}\,\phi_{{T}}^{l}(t_e)-\mathrm{Re}\,\phi_{{T}}^{l}(h_e),
\end{align*}
Analogously,  $\mathrm{Re}B_l=\mathrm{Re}\,\phi_\mathcal{T^*}^{l}(t_e)-\mathrm{Re}\,\phi_\mathcal{T^*}^{l}(h_e)$.
\end{proof}




\subsection{Discrete Abelian differentials}




The \emph{differential} of a discrete Abelian integral $f=(\mathrm{Re}f\colon\widetilde{{{T}}}^0\to\mathbb{R},\mathrm{Im}f\colon\widetilde{{{T}}}^2\to\mathbb{R})$ of the 1st kind is the function $df\colon \vec{{{T}}}^1\to\mathbb{R}$ given by the formula
\begin{equation}\label{eq-def-differential}
df(e):=\mathrm{Re}f(h_e)-\mathrm{Re}f(t_e)
\end{equation}
for each edge $e\in\vec{{{T}}}^1$. The functions $\omega\colon \vec{{{T}}}^1\to\mathbb{R}$ which are differentials of discrete Abelian integrals of the 1st kind are called \emph{discrete Abelian differentials of the 1st kind}. 

Abelian differentials of the 2nd kind have no discrete counterparts  in our setup.


A \emph{discrete Abelian differential of the third kind} is an arbitrary map $\omega\colon \vec{{{T}}}^1\to \mathbb{R}$ satisfying the condition $\omega(-e)=-\omega(e)$. Its \emph{integral} $\int_\gamma \omega$ along an oriented path $\gamma$ in the graph ${{T}}^1$ is the sum of values on the oriented edges of the path $\gamma$. The \emph{residue} $\mathrm{res}_{w}\omega$ of $\omega$ at a face $w\in {{T}}^2$ is the counterclockwise integral along the boundary of the face.
Given a collection of closed paths $\alpha_1,\dots,\alpha_g,\beta_1,\dots,\beta_g\colon [0;1]\to {{T}}^1$ in the graph ${{T}}^1$ forming a standard basis of the fundamental group, define the \emph{real periods} of $\omega$ as the integrals along
these paths. The real periods of $\omega$ may depend on the choice of the paths.

Similarly, a discrete Abelian differential $\omega\colon \vec{{{T}}}^1\to \mathbb{R}$ of the third kind can be integrated along any chain of oriented triangles such that the consecutive triangles in the chain induce opposite orientations on their unique common side. For example, if the chain consists of $2$ oriented triangles $xyz$ and $zyw$ then the integral of $\omega$ is by definition $\mscomm{{c}}(yz)\omega(yz)$. The details of the definition for a general chain are left to the reader. The \emph{residue} of $\omega$ at a vertex $z\in {{T}}^0$ is the number $$\mathrm{res}_{z}\omega:=i\sum_{e\in \vec{{{T}}}^1\,:\,h_e=z}\mscomm{{c}}(e)\omega(e).$$
The \emph{imaginary periods} of $\omega$ are defined analogously to the real ones as the integrals along certain fixed chains of triangles.

A direct checking shows that $\omega\colon \vec{{{T}}}^1\to \mathbb{R}$ is a discrete Abelian differential of the 1st kind if and only if its residues
at all faces and all vertices vanish. It is also easy to see that for a discrete Abelian differential of the 3rd kind the sum of the residues at all the vertices and all the faces vanishes.

\begin{4th-existence-and-uniqueness-theorem}\label{cor-uniqueness-third-kind2}
For any two distinct vertices $z,w\in {{T}}^0$  there is a unique discrete Abelian differential $d\phi_{z,w}\colon \vec{{{T}}}^1\to\mathbb{R}$ of the third kind with $\mathrm{res}_{z}d\phi_{z,w}=-\mathrm{res}_{w}d\phi_{z,w}=i$
and with all the other residues and both real and imaginary $A$-periods vanishing.
\end{4th-existence-and-uniqueness-theorem}

\begin{proof} \emph{Uniqueness.} Let $\omega,\omega'$ be two discrete Abelian differentials of the third kind as required. Then all residues of $\omega-\omega'$ vanish. Hence $\omega-\omega'$ is a discrete Abelian differential of the first kind. Since all A-periods of $\omega-\omega'$ vanish by Corollary~\ref{cor-zero-periods2}(2)$\Rightarrow$(1) it follows that $\omega=\omega'$.

\emph{Existence.} 
Let $\omega\colon \vec{{{T}}}^1\to \mathbb{R}$ be an unknown function satisfying the condition $\omega(-e)=-\omega(e)$. Consider its values at all the edges as variables, and the residue $ir$ at the vertex $z$ 
as a parameter.
For each vertex $y\in {{T}}^0$ distinct from $w$ and $z$ write one linear equation $\mathrm{res}_{y}\omega=0$. Write the linear equation $\mathrm{res}_{z}\omega=ir$. (The equality $\mathrm{res}_{w}\omega=-ir$ is obtained by summation of the written ones and therefore is not written.) For each face  $x\in {{T}}^2$ but one write one linear equation $\mathrm{res}_{x}\omega=0$. (Similar equation for the remaining face is obtained by summation of the written ones.)
For each $k=1,\dots,g$ write one linear equation $\int_{{\alpha}_k} \omega=0$ meaning that the real A-periods of $\omega$ vanish. Write $g$ analogous linear equations meaning that the imaginary A-periods of $\omega$ vanish. We get a system of $|{{T}}^0|+|{{T}}^2|+2g-2$ equations in $|{{T}}^1|$ variables. By the Euler formula it follows that the number of equations equals the number variables. By the uniqueness part of the theorem it follows that for $r=0$ the system has only trivial solution. Thus by the finite-dimensional Fredholm alternative it follows that for $r=1$ the system has a solution. This solution is the required discrete Abelian differential of the third kind.
\end{proof}

\begin{lemma}\label{cor-identity2} 
For any discrete meromorphic function $f=(\mathrm{Re}f\colon \widetilde{{{T}}}^0\to\mathbb{R},\mathrm{Im}f\colon \widetilde{{{T}}}^2\to\mathbb{R})$
and any  two distinct vertices $z,w\in {{T}}^0$ we have
$$
\mathrm{Re}f(z)-\mathrm{Re}f(w)=
\sum_{e\in {{{T}}}^1}d\phi_{z,w}(e)\cdot\mathrm{res}_{e}f
$$
\end{lemma}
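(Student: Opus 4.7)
Set $\omega:=d\phi_{z,w}$. The plan is to carry out a discrete summation by parts on the right-hand side, converting the edge sum into contributions from the face and vertex residues of $\omega$; the prescribed residue data of $d\phi_{z,w}$ (Fourth Existence and Uniqueness Theorem~\ref{cor-uniqueness-third-kind2}) will then produce exactly the left-hand side.

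First I would expand $\mathrm{res}_e f$ by its definition and split the sum into two pieces:
\begin{align*}
\sum_{e\in {{T}}^1}\omega(e)\cdot\mathrm{res}_e f
&= \sum_{e\in {{T}}^1}\omega(e)\bigl[\mathrm{Im}f(r_e)-\mathrm{Im}f(l_e)\bigr]\\
&\quad+\sum_{e\in {{T}}^1}c(e)\,\omega(e)\bigl[\mathrm{Re}f(h_e)-\mathrm{Re}f(t_e)\bigr].
\end{align*}
I would regroup the first piece by faces: each $x\in {{T}}^2$ collects the coefficient $\mathrm{Im}f(x)\bigl[\sum_{e\in {{T}}^1,\,r_e=x}\omega(e)-\sum_{e\in {{T}}^1,\,l_e=x}\omega(e)\bigr]=-\mathrm{Im}f(x)\cdot\mathrm{res}_x\omega$, where the final equality uses $\omega(-e)=-\omega(e)$ to reinterpret the bracket as the counterclockwise boundary integral of $\omega$ around $x$. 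Since $\omega=d\phi_{z,w}$ has vanishing residues at all faces, this entire piece is zero. The face-wise regrouping is legitimate because $\mathrm{Im}f\colon {{T}}^2\to\mathbb{R}$ is single-valued, which is precisely where meromorphy of $f$ (vanishing of \emph{all} periods) enters, distinguishing $f$ from a general Abelian integral of the second kind.

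I would then regroup the second piece by vertices: each $v\in {{T}}^0$ receives $\mathrm{Re}f(v)\cdot\bigl[\sum_{e\in {{T}}^1,\,h_e=v}c(e)\omega(e)-\sum_{e\in {{T}}^1,\,t_e=v}c(e)\omega(e)\bigr]$. Combining the two orientations of each edge incident to $v$ via $c(-e)=c(e)$ and $\omega(-e)=-\omega(e)$, the bracket rewrites as $\sum_{e\in\vec{{T}}^1,\,h_e=v}c(e)\omega(e)=-i\,\mathrm{res}_v\omega$ by the definition of vertex residue. Plugging in $\mathrm{res}_z\omega=i$, $\mathrm{res}_w\omega=-i$, and zero otherwise yields exactly $\mathrm{Re}f(z)-\mathrm{Re}f(w)$, as required. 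The main obstacle is sign bookkeeping: edges of ${{T}}^1$ carry a fixed but arbitrary orientation, whereas face and vertex residues are naturally defined on $\vec{{T}}^1$ with the counterclockwise and heads-at-$v$ conventions; once these are reconciled using the antisymmetry of $\omega$, the argument is a direct discrete Stokes-type computation, closely parallel to the derivation in Energy Conservation Principle~\ref{th-energy-conservation}. Notably, the vanishing of the $A$-periods of $d\phi_{z,w}$ plays no role---only its residue data is needed.
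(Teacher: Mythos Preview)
Your proof is correct and follows essentially the same route as the paper: expand $\mathrm{res}_e f$, regroup the $\mathrm{Im}f$ part by faces and the $\mathrm{Re}f$ part by vertices, identify the resulting inner sums as (minus) the face and vertex residues of $d\phi_{z,w}$, and then read off the answer from the prescribed residue data. Your write-up is in fact more explicit than the paper's about the orientation bookkeeping and about where single-valuedness of $f$ is actually used.
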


\begin{proof} 
%
%
The lemma follows from the sequence of equalities
\begin{align*}
\sum_{e\in {{{T}}}^1}d\phi_{z,w}(e)\cdot\mathrm{res}_{e}f
&=
\sum_{e\in {{{T}}}^1}d\phi_{z,w}(e)\cdot 
(\mathrm{Im}f(r_e)-\mathrm{Im}f(l_e)+
\mscomm{{c}}(e)\mathrm{Re}f(h_e)-\mscomm{{c}}(e)\mathrm{Re}f(t_e))\\
&=
\sum_{x\in {{{T}}}^2}\mathrm{Im}f(x)
\sum_{e\in \vec{{{T}}}^1:r_e=x}d\phi_{z,w}(e)
+
\sum_{y\in {{{T}}}^0}\mathrm{Re}f(y)
\sum_{e\in \vec{{{T}}}^1:h_e=y}\mscomm{{c}}(e)d\phi_{z,w}(e)\\
&=
-\sum_{x\in {{T}}^2}
\mathrm{Im}f(x)\cdot\mathrm{res}_{x}d\phi_{z,w}
-
i\sum_{y\in{{T}}^0}
\mathrm{Re}f(y)\cdot\mathrm{res}_{y}d\phi_{z,w}\\
&=\mathrm{Re}f(z)-\mathrm{Re}f(w).\\[-1.6cm]
\end{align*}
\end{proof}

Similar results hold, if one takes two distinct \emph{faces} $z,w\in{{T}}^2$ instead of two distinct \emph{vertices}.

\subsection{Discrete Riemann--Roch theorem}

A \emph{divisor} on ${{T}}$ is an arbitrary map ${{T}}^0\sqcup {{T}}^1\sqcup {{T}}^2\to\{0,\pm1\}$.  The \emph{divisor} $(f)\colon {{T}}^0\sqcup {{T}}^1\sqcup {{T}}^2\to\{0,\pm1\}$ of a discrete meromorhic function $f=(\mathrm{Re}f\colon {{T}}^0\to\mathbb{R},\mathrm{Im}f\colon  {{T}}^2\to\mathbb{R})$ is defined by the formulae:
$$
(f)(z)=
\begin{cases}
+1, &\text{if }\mathrm{Re}f(z)=0;\\
0, &\text{if }\mathrm{Re}f(z)\ne 0;
\end{cases}
\qquad
(f)(e)=
\begin{cases}
-1, &\text{if }\mathrm{res}_{e}f\ne 0;\\
0, &\text{if }\mathrm{res}_{e}f=0;
\end{cases}
\qquad
(f)(w)=
\begin{cases}
+1, &\text{if }\mathrm{Im}f(w)=0;\\
0, &\text{if }\mathrm{Im}f(w)\ne 0;
\end{cases}
$$
for each $z\in {{T}}^0$, $e\in {{T}}^1$, $w\in {{T}}^2$.
The \emph{divisor} $(\omega)\colon {{T}}^0\sqcup {{T}}^1\sqcup {{T}}^2\to\{0,\pm1\}$ of a discrete Abelian differential $\omega\colon  \vec{{{T}}}^1\to\mathbb{R}$ of the third kind is defined by the formulae:
$$
(\omega)(z)=
\begin{cases}
-1, &\text{if }\mathrm{res}_{z}\omega\ne 0;\\
0,  &\text{if }\mathrm{res}_{z}\omega = 0;
\end{cases}
\qquad
(\omega)(e)=
\begin{cases}
+1, &\text{if }\omega(e)=0;\\
0,  &\text{if }\omega(e)\ne0;
\end{cases}
\qquad
(\omega)(w)=
\begin{cases}
-1, &\text{if }\mathrm{res}_{w}\omega\ne 0;\\
0,  &\text{if }\mathrm{res}_{w}\omega = 0.
\end{cases}
$$

\begin{remark} As opposed to the continuous theory, on a discrete Riemann surface divisors do \emph{not} form an Abelian group. This reflects the fact that the product of discrete analytic functions is not necessarily a discrete analytic function. Also the notions of multiple zeroes and multiple poles have no discrete counterparts in our setup.
\end{remark}

The \emph{degree} $\deg D$ of a divisor $D\colon {{T}}^0\sqcup {{T}}^1\sqcup {{T}}^2\to\{0,\pm1\}$ is the sum of all its values.
The divisor $D\colon {{T}}^0\sqcup {{T}}^1\sqcup {{T}}^2\to\{0,\pm1\}$ is \emph{greater or equal}
to a divisor $D'\colon {{T}}^0\sqcup {{T}}^1\sqcup {{T}}^2\to\{0,\pm1\}$, if
for each $z\in {{T}}^0\sqcup {{T}}^1\sqcup {{T}}^2$ we have $D(z)\ge D'(z)$. Denote by $l(D)$ the dimension of the space of discrete meromorphic functions on 
$ {{T}}$ with divisors greater or equal to the divisor $D$. Denote by $i(D)$ the dimension of the space of Abelian differentials of the third kind on $ {{T}}$ with divisors greater or equal to the divisor $D$. A divisor $D$ is \emph{admissible}, if $D(z)\le 0$ for each vertex $z\in {{T}}^0$, $D(e)\ge 0$
for each edge $e\in {{T}}^1$, and $D(w)\le 0$ for each face $w\in {{T}}^2$.

\begin{Riemann-Roch-theorem} \label{Riemann-Roch-theorem}
For any admissible divisor $D$ on a triangulated surface of genus $g$ we have
\begin{equation}\label{eq-riemann-roch}
l(-D)=\deg D-2g+2+i(D).
\end{equation}
\end{Riemann-Roch-theorem}

\begin{remark} This is a real version of the classical Riemann--Roch theorem: $
l_{{S}}(-D_{{S}})=\deg D_{{S}}-g+1+i(D_{{S}}),
$
where $D_{{S}}$ is a divisor on a Riemann surface ${{S}}$ and $l_{{S}}(-D_{{S}}),i_{{S}}(D_{{S}})$ are the \emph{complex} dimensions of continuous counterparts of the spaces from the definition of $l(-D),i(D)$. The discrete counterpart $D$ of a divisor $D_{{S}}$ 
has twice larger degree. An informal explanation is that a counterpart of one complex equation describing vanishing of a meromorphic function at a point is two real equations describing vanishing of the real and imaginary parts of a discrete meromorphic function.
\end{remark}

\begin{proof}[Proof of Riemann--Roch Theorem~\ref{Riemann-Roch-theorem}] Let $D\colon {{T}}^0\sqcup {{T}}^1\sqcup {{T}}^2\to\{0;\pm1\}$ be an admissible divisor. 
The \emph{support} of $D$ is the set  
$\mathrm{supp}D:=\{z\in  {{T}}^0\sqcup {{T}}^1\sqcup {{T}}^2:D(z)\ne 0 \}$. 
Denote $D_\infty=D\left|_{{{T}}^1}\right.$ and $D_0=D\left|_{{{T}}^0\sqcup {{T}}^2}\right.$.
Consider a discrete Abelian integral $f=(\mathrm{Re}f\colon {{T}}^0\to\mathbb{R},\mathrm{Im}f\colon  {{T}}^2\to\mathbb{R})$ of the 2nd kind with vanishing A-periods whose poles belong to the support $\mathrm{supp} D_\infty$. 
 By Corollary~\ref{cor-zero-periods2} it
 can be written in a unique way as
$$
f=\sum_{e\in\mathrm{supp}D_\infty}
\lambda_{e}\phi_{e}+\mathrm{const}
$$
with $\lambda_{e}=\mathrm{res}_e f\in \mathbb{R}$. By Lemma~\ref{cor-B-periods2}
its B-periods 
vanish if and only if for each $k=1,\dots,g$ we have
\begin{align}\label{eq-1st-condition2}
\sum_{e\in\mathrm{supp}D_\infty}
\lambda_{e}d\phi_{{{T}}}^k(e)&=0;\\
\label{eq-1st-condition3}\sum_{e\in\mathrm{supp}D_\infty}
\lambda_{e}d\phi_{{{T}}^*}^k(e)&=0.
\end{align}
Assume that equations~\eqref{eq-1st-condition2}--\eqref{eq-1st-condition3} hold.
Then by Lemma~\ref{cor-identity2} the restriction of $f=(\mathrm{Re}f\colon {{T}}^0\to\mathbb{R},\mathrm{Im}f\colon  {{T}}^2\to\mathbb{R})$ to the set $\mathrm{supp}D_0$ is a constant if and only if for each pair of elements $z,w\in {{T}}^0\cap\mathrm{supp}D_0$ or
$z,w\in {{T}}^2\cap\mathrm{supp}D_0$ we have
\begin{equation}\label{eq-2nd-condition2}
\sum_{e\in\mathrm{supp}D_\infty}
\lambda_{e}d\phi_{z,w}(e)=0.
\end{equation}
In what follows assume that equations~\eqref{eq-2nd-condition2} are written only for those pairs $(z,w)$ which form certain maximal trees in the two complete graphs on the sets ${{T}}^0\cap\mathrm{supp}D_0$ and ${{T}}^2\cap\mathrm{supp}D_0$, respectively. (The equations for the remaining pairs of vertices follow from the written ones.)

Consider \eqref{eq-1st-condition2}--\eqref{eq-2nd-condition2} as a system of linear equations in variables $\lambda_{e}$ for all $e\in \mathrm{supp}D_\infty$. By the previous paragraph the number
$l(-D)$ equals the dimension of the space of solutions of this system. Thus the rank of the system \eqref{eq-1st-condition2}--\eqref{eq-2nd-condition2} equals $\deg D_\infty-l(-D)$.

Transposing the matrix of the system \eqref{eq-1st-condition2}--\eqref{eq-2nd-condition2}, we get a new system of linear equations. Clearly, $i(D)$ equals the dimension of the space of solutions of the new system. Thus the rank of the new system equals $\deg D_0+2g-2-i(D)$. Since transposition does not change the rank we obtain the required identity
$\deg D_\infty-l(-D)=\deg D_0+2g-2-i(D)$.
\end{proof}

\begin{example} In the particular case of a triangulated  flat torus and a divisor supported by $1$ or $2$ edges Riemann--Roch Theorem~\ref{Riemann-Roch-theorem} implies the following: 
\begin{itemize}
\item there are no nonconstant discrete meromorphic functions with exactly one simple pole;
\item there exists a nonconstant discrete meromorphic function with  exactly two simple poles at two given edges if and only if the edges are parallel to each other.
\end{itemize}
This is proved by a direct computation of the right-hand side of~\eqref{eq-riemann-roch} using explicit construction of discrete Abelian integrals in Example~\ref{ex-torus}.
\end{example}

\section{Generalizations}\label{sec-generalization}

\subsection{Quad-surfaces}\label{ssec-quads}


Let us discuss a generalization of discrete Riemann surfaces considered in this paper above. This generalization has been introduced by Mercat \cite{MR2349680}. It has a physical interpretation in terms of alternating-current networks; cf. \cite[Section~5.2]{A1-Skopenkov}. 

Let ${{Q}}$ be a cell decomposition of the surface ${{S}}$ with quadrilateral faces. Assume that each vertex of ${{Q}}$ is paint either 
black or white so that the endpoints of each edge have different colors. Assume also that the intersection of any two faces is either empty, or a single vertex, or a single edge. A \emph{chart} on ${{Q}}$ is an orientation-preserving 
homeomorphism of a face and a (not necessarily convex) quadrilateral $z_1z_2z_3z_4$ in the complex plane $\mathbb{C}$. An \emph{atlas} on ${{Q}}$ is a collection of charts, one for each face  (no agreement of charts for distinct faces is assumed). A cell decomposition ${{Q}}$ with a fixed atlas is called a \emph{quad-surface}.

A function $f\colon{{Q}}^0\to\mathbb{C}$ is \emph{discrete analytic on} ${{Q}}$, if the difference quotients along the two diagonals of each face are equal, i.~e., 
for each face $z_1z_2z_3z_4\in{{Q}}^2$ we have 
\begin{equation}\label{eq-def-analytic}
\frac{f(z_1)-f(z_3)}{z_1-z_3}=
\frac{f(z_2)-f(z_4)}{z_2-z_4}.
\end{equation}

A \emph{multi-valued function on} ${{Q}}$ \emph{with periods} 
$\mathbf{A}_1,\dots,\mathbf{A}_{g},
\mathbf{B}_1,\dots,\mathbf{B}_{g},
\mathbb{A}_1,\dots,\mathbb{A}_{g},
\mathbb{B}_1,\dots,\mathbb{B}_{g}
\in\mathbb{C}$ is a function 
$f\colon\widetilde{{{Q}}}^0\to\mathbb{C}$ such that
for each $k=1,\dots,g$, each black vertex $z\in \widetilde{{{Q}}}^0$, and each white vertex $w\in \widetilde{{{Q}}}^0$ we have
\begin{align*}
f(d_{\alpha_k}z)-f(z)&=\mathbf{A}_k; & 
f(d_{\beta_k}z)-f(z)&=\mathbf{B}_k;\\
f(d_{\alpha_k}w)-f(w)&=\mathbb{A}_k; &
f(d_{\beta_k}w)-f(w)&=\mathbb{B}_k.
\end{align*}
The numbers $\mathbf{A}_1,\dots,\mathbf{A}_{g},
\mathbb{A}_1,\dots,\mathbb{A}_{g}$ 
are called the \emph{A-periods} 
of the multi-valued function $f$. 
A multi-valued discrete analytic function is called a \emph{discrete Abelian integral of the 1st kind} on ${{Q}}$.


The following theorem is proved analogously to Riemann Bilinear Identity~\ref{th-Riemann2}.

\begin{theorem}\label{th-riemann-identity}
For any two multi-valued functions $f,f'\colon \widetilde{{{Q}}}^0\to\mathbb{C}$
with periods
$$
\mathbf{A}_1,\dots,\mathbf{A}_{g},
\mathbf{B}_1,\dots,\mathbf{B}_{g},
\mathbb{A}_1,\dots,\mathbb{A}_{g},
\mathbb{B}_1,\dots,\mathbb{B}_{g}
\quad\text{and}\quad
\mathbf{A}'_1,\dots,\mathbf{A}'_{g},
\mathbf{B}'_1,\dots,\mathbf{B}'_{g},
\mathbb{A}'_1,\dots,\mathbb{A}'_{g},
\mathbb{B}'_1,\dots,\mathbb{B}'_{g},
$$ 
respectively, we have
\begin{multline*}
\sum_{z_1z_2z_3z_4\in {{Q}}^2}
\left((f(z_1)-f(z_3))(f'(z_2)-f'(z_4))-(f(z_2)-f(z_4))(f'(z_1)-f'(z_3))\right)\\
=\sum_{k=1}^g
(\mathbf{A}_k\mathbb{B}'_{k}-
\mathbb{B}_{k}\mathbf{A}'_k+
\mathbb{A}_{k}\mathbf{B}'_{k}-
\mathbf{B}_{k}\mathbb{A}'_{k}).
\end{multline*}
\end{theorem}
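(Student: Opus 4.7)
My plan mirrors the proof of Riemann Bilinear Identity~\ref{th-Riemann2}, adapted from triangular to quadrilateral faces and with additional bookkeeping for the bipartite black-white vertex coloring.

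First, I will prove a Stokes-type formula for quad-surfaces. Direct algebraic expansion yields the per-face identity
$$(f(z_1)-f(z_3))(f'(z_2)-f'(z_4))-(f(z_2)-f(z_4))(f'(z_1)-f'(z_3))=\sum_{i=1}^{4}\bigl(f(z_i)f'(z_{i+1})-f(z_{i+1})f'(z_i)\bigr)$$
with indices taken modulo~$4$, so the face summand is a discrete circulation around the four oriented boundary edges. Summing over any subcomplex $\widehat{{{Q}}}\subset\widetilde{{{Q}}}$ made of quad faces, every interior edge appears in two adjacent faces with opposite induced orientations and cancels, leaving the quad analog of Stokes' Formula~\ref{l-Stokes}:
$$\sum_{z_1z_2z_3z_4\in\widehat{Q}^2}\bigl[(f(z_1)-f(z_3))(f'(z_2)-f'(z_4))-(f(z_2)-f(z_4))(f'(z_1)-f'(z_3))\bigr]=\sum_{xy\in(\partial\widehat{Q})^1}\bigl(f(x)f'(y)-f(y)f'(x)\bigr),$$
with each boundary edge oriented so that $\widehat{Q}$ lies on its left.

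Next, I will apply this formula to a fundamental polygon $P\subset\widetilde{{{Q}}}$ projecting bijectively onto ${{Q}}$, whose boundary consists of $4g$ polygonal arcs paired by deck transformations: the arc $e_{4k-1}$ is identified with the reverse of $e_{4k-3}$ via $d_{\alpha_k}$, and $e_{4k}$ with the reverse of $e_{4k-2}$ via $d_{\beta_k}$, exactly as in Example~\ref{ex-canonic}. For each pair of corresponding boundary edges $xy$ and $d_{\alpha_k}(yx)$, the sum of the two contributions expands into terms linear in $f,f'$ plus a quadratic period constant. The linear-in-$(f,f')$ terms telescope along the paired arcs and cancel at the corners of each $\alpha_k\beta_k\alpha_k^{-1}\beta_k^{-1}$-block of the $4g$-gon, exactly as the corner cancellation in the closing computation of Example~\ref{ex-canonic}.

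The main obstacle is the bipartite bookkeeping of the remaining period-times-period terms. At each edge, the increment $f(d_{\alpha_k}x)-f(x)$ equals $\mathbf{A}_k$ when $x$ is black and $\mathbb{A}_k$ when $x$ is white, and likewise for $f'$ and for the $\beta_k$-pairs. Because each quad edge joins a black vertex to a white vertex, summing the two contributions from a $k$-th pair of identified edges always couples one black shift with one white shift; tracking also the orientation reversal between identified arcs, the accumulated contribution of the $k$-th pair is exactly $\mathbf{A}_k\mathbb{B}'_k-\mathbb{B}_k\mathbf{A}'_k+\mathbb{A}_k\mathbf{B}'_k-\mathbf{B}_k\mathbb{A}'_k$, and summation over $k$ yields the asserted identity. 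The delicate step is keeping straight which of the four period types $\{\mathbf{A}_k,\mathbb{A}_k,\mathbf{B}_k,\mathbb{B}_k\}$ pairs with which of $\{\mathbf{A}'_k,\mathbb{A}'_k,\mathbf{B}'_k,\mathbb{B}'_k\}$ and with what sign.
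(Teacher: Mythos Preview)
Your overall strategy is the same as the paper's: the paper states that this theorem ``is proved analogously to Riemann Bilinear Identity~\ref{th-Riemann2},'' and your per-face identity
\[
(f(z_1)-f(z_3))(f'(z_2)-f'(z_4))-(f(z_2)-f(z_4))(f'(z_1)-f'(z_3))=\sum_{i=1}^{4}\bigl(f(z_i)f'(z_{i+1})-f(z_{i+1})f'(z_i)\bigr)
\]
together with the resulting Stokes-type formula is exactly the quad analog of Stokes' Formula~\ref{l-Stokes}. The closing boundary computation with black/white bookkeeping is also the right endgame.

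There is, however, one genuine gap. You assume you can choose a fundamental polygon $P\subset\widetilde{Q}$ made of quad faces whose boundary decomposes into $4g$ arcs paired precisely by the deck transformations $d_{\alpha_k},d_{\beta_k}$. For a general $Q$ this need not be available: a fundamental domain built from faces of $\widetilde{Q}$ will have its boundary edges paired by \emph{some} deck transformations, but not necessarily by the chosen generators, and not organized into $4g$ contiguous arcs. This is exactly the issue the paper confronts in the triangular case (``there are triangulations which are not subdivisions of the canonical cell decomposition''), and which it resolves by first proving that the left-hand side is invariant under edge subdivisions and under adjusting the value of the second function on a single face orbit, so that one may assume without loss of generality that $T$ subdivides the canonical $4g$-gon. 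Your argument needs the quad analog of this reduction step---either an invariance-under-local-moves argument for quad decompositions, or the standing hypothesis that edge-path representatives of $\alpha_k,\beta_k$ lie in $Q^1$---before the telescoping boundary computation is legitimate.
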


Analogously to First Existence and Uniqueness Theorem~\ref{th-fund} one can prove that there is a unique (up to constant) Abelian integral $\phi^l_{{Q}}\colon\widetilde{{{Q}}}^0\to\mathbb{C}$
 of the 1st kind whose A-periods are given by the formulae $\mathbf{A}_k=\delta_{kl}$, $\mathbb{A}_k=\delta_{kl}$,
where $k=1,\dots,g$. Let $\mathbf{B}^l_1,\dots,\mathbf{B}^l_{g},
\mathbb{B}^l_1,\dots,\mathbb{B}^l_{g}$ be the B-periods of~ $\phi^l_{{Q}}\colon\widetilde{{{Q}}}^0\to\mathbb{C}$. The matrix 
$(\Pi_{{Q}})_{kl}:=\frac{1}{2}(\mathbf{B}^l_k+\mathbb{B}^l_k)$ is the \emph{period matrix} of the quad-surface ${{Q}}$.

\begin{problem} Generalize Convergence Theorem for Period Matrices~\ref{th-main} to quad-surfaces.
\end{problem}

Convergence of period matrices for quad-surfaces cannot be proved similarly to triangulated surfaces. Difficulties appear in the proof of a counterpart of Lemma~\ref{l-hard-inequality} because for a quad-surface the natural ``interpolation'' $I_{{{Q}}} u$ from \cite[Section~5.1]{A1-Skopenkov} is not continuous.

\begin{problem} Which complex $g\times g$ matrices arise as period matrices of quad-surfaces of genus~$g$?
\end{problem}

This discrete counterpart of the Shottky problem is related to an inverse problem for alternating-current networks and seems to be very difficult.

\subsection{Delaunay--Voronoi quadrangulation}
\label{sec:Delaunay--Voronoi quadrangulation}

Let us show that the discretization of Riemann surfaces studied in Sections~\ref{sec-main}--\ref{sec-riemann-roch} is a particular case of the one from Section~\ref{ssec-quads} in a natural sense.
By the \emph{circumcenter} $w^*$ of a face $w$ of a Delaunay triangulation we mean the center of the immersed flat intrinsic disk containing the face $w$ in the interior and the vertices of $w$ in the boundary (such disk exists by the results of~\cite{Bobenko-Springborn-07}). By the \emph{circumradius} we mean a geodesic segment lying in the disk and joining the circumcenter with a point in the boundary.


\begin{theorem}\label{l-transformation} 
Let ${{T}}$ be a Delaunay triangulation of \mscomm{a closed polyhedral} surface~${{S}}$ satisfying condition (D) from Section~\ref{ssec:Convergence of discrete Abelian integrals}. For each face of ${{T}}$, draw $3$ circumradii from the circumcenter to the vertices; see Figure~\ref{fig:2triangles}. 
Then the drawn segments do not have common interior points and thus form a 
quad-surface $Q$, in which each face is identified with a planar quadrilateral 
by an orientation-preserving isometry. 
\end{theorem}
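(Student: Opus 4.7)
The plan is to identify the faces of the desired quad-surface $Q$ as \emph{Delaunay--Voronoi kites}: for each edge $e=xy \in T^1$ with adjacent triangles $xyz, xyw$ and circumcenters $O_1, O_2$, the corresponding face is the quadrilateral with consecutive vertices $x, O_1, y, O_2$ and sides $xO_1, O_1 y, yO_2, O_2 x$, all of which are among the drawn circumradii. The goal will then be to show that these kites are simple planar quadrilaterals embedded in $S$ that tile it and meet only along drawn circumradii.

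First, I would invoke the Bobenko--Springborn theorem to obtain, for each face $w \in T^2$, a well-defined immersed flat disk in $S$ inscribing $w$, yielding a circumcenter $w^* \in S$ and three circumradii from $w^*$ to the vertices of $w$. By the Delaunay condition, no other vertex of $T$ lies in the interior of this immersed disk, so each circumradius is a geodesic segment in $S$ whose interior is flat and free of vertices. Next, for each edge $e=xy$, the union $xyz \cup xyw$ is a flat region in $S$; isometrically develop it into the Euclidean plane. In this planar picture, both $O_1, O_2$ lie on the perpendicular bisector of $xy$; by the inscribed angle theorem, $O_1$ is on the same side of $xy$ as $z$ iff $\alpha_e < \pi/2$, and similarly for $O_2$ and $w$. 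When $\alpha_e, \beta_e \le \pi/2$, the kite is convex; when, say, $\alpha_e > \pi/2$, both circumcenters lie on the $w$-side of $xy$, and the inequality $\alpha_e+\beta_e < \pi$ from condition~(D), combined with the circumradius formula $R = |xy|/(2\sin(\text{opposite angle}))$, forces $|O_1M| < |O_2M|$, where $M$ is the midpoint of $xy$, so the four vertices form a simple, arrow-shaped quadrilateral concave at $O_1$. In either case the planar development provides an orientation-preserving isometric identification of the kite with a simple planar quadrilateral, supplying the chart required by the definition of a quad-surface.

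Next, I would show that the kites tile $S$ with pairwise-disjoint interiors, so that the drawn circumradii have no common interior points. The argument is local and twofold. Around each vertex $v \in T^0$, the faces of $T$ incident to $v$ fill an angular neighborhood of total angle $2\pi/\gamma_v$; each such face contributes a circumradius $w^* v$ meeting $v$, and cyclically these radii subdivide the angular neighborhood into sectors, one per edge of $T$ at $v$. Each such sector is precisely the angular piece at $v$ of the kite associated with the corresponding edge. Inside each face $xyz$, the three circumradii subdivide the face (supplemented by pieces of the triangle's sides in the obtuse case) into three sub-regions, one per edge, and each sub-region is the portion of the kite of that edge lying in $xyz$. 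Gluing along the shared edge $xy$ shows that $xO_1 yO_2$ is precisely the union of the corresponding sub-regions in $xyz$ and $xyw$.

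The main obstacle will be the obtuse case, where $O_1$ lies outside $xyz$ in the planar development and the radius $O_1 z$ crosses the edge $xy$ inside the immersed disk. Here the description of how $xyz$ is partitioned into three kite-sectors is subtler, and one must verify that the obtuse-case arrow-shaped kites are simple and do not overlap adjacent kites across the shared edge $xy$. The quantitative margin in condition~(D), $\alpha_e+\beta_e < \pi - 1/\mathrm{Const}$, is crucial: it strictly separates the configuration from the limiting case $O_1 = O_2$, in which the kite would degenerate to a triangle. Once this is handled, the bipartite coloring (primal vertices of $T^0$ black, circumcenters white) and the charts described above equip $Q$ with all the structure required of a quad-surface.
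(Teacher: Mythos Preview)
Your identification of the Delaunay--Voronoi kites and your simplicity check in the obtuse case are correct: the condition $\alpha_e+\beta_e<\pi$ from (D) is exactly $\cot\alpha_e+\cot\beta_e>0$, which (via $|O_jM|=\tfrac{1}{2}|xy|\,|\cot(\cdot)|$) forces the two circumcenters onto distinct points along the perpendicular bisector in the required order, so the quadrilateral $xO_1yO_2$ is simple with positive oriented area. The paper uses this same ingredient.

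Where your approach diverges from the paper is in the tiling step. You argue \emph{locally}, decomposing the star of each vertex into angular sectors and each face into three sub-regions. This can be made to work, but your sketch does not yet handle the obtuse case cleanly: when a triangle adjacent to $v$ has an obtuse angle at a vertex other than $v$, the circumradius $w^*v$ leaves the angular wedge of that triangle at $v$, so the ``cyclic subdivision into sectors'' statement needs signed angles and a check that the sector sizes $\pi-\alpha_{e}-\beta_{e}$ (positive by (D)) sum to the aperture. Similarly the ``three sub-regions inside each face'' description breaks down literally when the circumcenter is outside the face. All of this is repairable, but it is real work you have deferred.

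The paper sidesteps this entirely with a \emph{global area count}. It observes that the oriented area of the kite over edge $e$ is $\tfrac{1}{4}|h_et_e|^2(\cot\alpha_e+\cot\beta_e)>0$; summing over edges and regrouping by faces gives exactly $\mathrm{Area}(S)$. Since the union of the kite-images has empty boundary in $S$ it must cover $S$, and a cover by regions of total area equal to $\mathrm{Area}(S)$ cannot overlap. This three-line argument replaces all of your local case analysis, at the cost of being less explicitly geometric. Your approach, if completed, would give a constructive picture of how each face of $T$ is partitioned; the paper's gives a shorter proof but no such picture.
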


\begin{proof} 
Take an oriented edge $e\in\vec{{{T}}}^1$. Let $w\in{{T}}^2$ be a face containing $e$. By the results of \cite{Bobenko-Springborn-07} there is an orientation-preserving and \mscomm{locally} isometric immersion $g_w\colon D_w\to{{S}}$ of a disk $D_w\subset\mathbb{C}$ such that the face $w$ and the vertices of $w$ are in the interior \mscomm{and} the boundary of $g_wD_w$, respectively. 
Without loss of generality assume that $g_{r_e}^{-1}t_e=g_{l_e}^{-1}t_e$ and
$g_{r_e}^{-1}h_e=g_{l_e}^{-1}h_e$. Denote $z_1:=g_{r_e}^{-1}t_e$ and 
$z_3:=g_{r_e}^{-1}h_e$.
  Let $z_2:=g_{l_e}^{-1}l_e^*$ and $z_4:=g_{r_e}^{-1}r_e^*$ be the centers of $D_{l_e}$ and $D_{r_e}$, respectively. Define a \mscomm{locally} isometric immersion $g_e\colon z_1z_2z_3z_4\to{{S}}$ to be equal to $g_{l_e}$ in the triangle $z_1z_2z_3$ and to $g_{r_e}$ in the triangle $z_1z_3z_4$. 
The images $g_e(z_1z_2z_3z_4)$ are the faces of the future quad-surface $Q$.

Compare the sum of the areas of the quadrilaterals $z_1z_2z_3z_4$ with the area of the union of their images. Clearly, 
$\mathrm{Area}(z_1z_2z_3z_4)=|h_et_e|^2(\cot \alpha_e + \cot \beta_e)/4$ for each edge $e\in {{T}}^1$. 
Since ${{T}}$ is Delaunay and satisfies condition (D) 
it follows that these oriented areas are all positive. Expressing the area of each face of ${{T}}$ as an algebraic sum of three terms of the form $\mathrm{Area}(z_1z_2z_3)$ we get $\sum_{e\in {{T}}^1} \mathrm{Area}(z_1z_2z_3z_4)=\mathrm{Area}({{S}})$. 
Finally, 
$\bigcup_{e\in {{T}}^1} g_e(z_1z_2z_3z_4)\supset {{S}}$ because $\partial(\bigcup_{e\in {{T}}^1} g_e(z_1z_2z_3z_4))=\emptyset$. This implies that the maps $g_e\colon z_1z_2z_3z_4\to {{S}}$ neither self-intersect nor overlap with each other, hence the drawn segments do not have common interior points.
\end{proof}

Now it is easy to construct an ``isomorphism'' between the discretizations on~${{T}}$ and~$Q$. For two arbitrary functions $u\colon {{T}}^0\to\mathbb{R},v\colon{{T}}^2\to\mathbb{R}$
define a function $f\colon Q^0\to\mathbb{R}$ by the formula $f(z):=u(z)$ for each $z\in {{T}}^0$ and $f(w^*):=iv(w)$ for each $w\in {{T}}^2$. 
Then for each face $z_1z_2z_3z_4\in Q^2$ with $e:=z_1z_3\in {{T}}^1$ equations~\eqref{eq-def-analytic2} and~\eqref{eq-def-analytic} are equivalent. 

\begin{problem} Find a direct proof of the existence of the ``Delaunay--Voronoi quadrangulation'' $Q$ of a polyhedral surface as in Theorem~\ref{l-transformation} not using the existence of a Delaunay triangulation.
\end{problem}

\mscomm{

\subsection{Numerical experiments}\label{ssec-num}

Let us present numerical analysis of the above convergence results using a software  by S.~Tikhomirov.  Take the surface ${{{S}}}$ obtained 
by gluing $3$ unit squares 
as shown in Figure~\ref{fig-l} to the left. This surface has genus $g=2$, the parameter $\gamma_S=1/3$, 
and the period matrix $\Pi_{{S}}=\frac{i}{3}
\left(
\begin{matrix}
5 & -4\\
-4 & 5
\end{matrix}
\right)$
\cite[p.~220]{A1-BOB-BobenkoMercatSchmies}.
Take the triangulation ${{T}}_n$ of the surface ${{S}}$ shown in Figure~\ref{fig-l} to the right. Computing the discrete harmonic functions $u_{{{T}}_n,P}\colon \widetilde{{{T}}}_n^0\to\mathbb{R}$ for all basis vectors $P\in\mathbb{R}^{2g}$ (using classical method of relaxations), then the energy matrix $E_{{{T}}_n}$, and then the discrete period matrix $\Pi_{{{T}}_n}$ (using Lemma~\ref{l-relation2}) we get
the values in Table~\ref{t-num}. The values in the last column are nearly equal, which shows that theoretical approximation order in 
Theorem~\ref{th-main}
agrees with the experimental one. Table~\ref{t-num} corrects wrong values in \cite[Table in p.~220, 1st surface]{A1-BOB-BobenkoMercatSchmies}.

\begin{figure}[htbp]
\definecolor{qqqqff}{rgb}{0,0,1}
\begin{tikzpicture}[line cap=round,line join=round,>=triangle 45,x=1.8cm,y=1.8cm]
\clip(-4.61,-0.35) rectangle (0.81,2.33);
\draw [->] (-3.5,2) -- (-2.5,2);
\draw [->] (-4.5,1) -- (-3.5,1);
\draw [->] (-4.5,0) -- (-3.5,0);
\draw [->] (-3.5,0) -- (-2.5,0);
\draw (-3.5,1)-- (-2.5,1);
\draw (-3.5,1)-- (-3.5,0);
\draw (-1.5,0)-- (-1.5,1);
\draw (-1.5,1)-- (-0.5,1);
\draw (-0.5,1)-- (-0.5,2);
\draw (-0.5,2)-- (0.5,2);
\draw (0.5,2)-- (0.5,1);
\draw (0.5,1)-- (0.5,0);
\draw (-1.5,0)-- (-0.5,0);
\draw (-0.5,0)-- (0.5,0);
\draw (-0.5,1)-- (-0.5,0);
\draw (-0.5,1)-- (0.5,1);
\draw (-0.5,1.5)-- (0.5,1.5);
\draw (0,2)-- (0,1);
\draw (0,1)-- (0,0);
\draw (-0.5,0.5)-- (0.5,0.5);
\draw (-0.5,0.5)-- (-1.5,0.5);
\draw (-1,1)-- (-1,0);
\draw (-0.5,1.5)-- (0,2);
\draw (-1.5,0.5)-- (-1,1);
\draw (-1.5,0)-- (0.5,2);
\draw (-1,0)-- (0.5,1.5);
\draw (-0.5,0)-- (0.5,1);
\draw (0,0)-- (0.5,0.5);
\draw (-1.27,1.73) node[anchor=north west] {${{T}}_n$};
\draw (-4.1,1.77) node[anchor=north west] {${{S}}$};
\draw [->] (-3.5,2) -- (-3.5,1);
\draw [->] (-2.5,2) -- (-2.5,1);
\draw (-4.5,1)-- (-3.5,1);
\draw (-3.97,1) -- (-4,0.96);
\draw (-3.97,1) -- (-4,1.04);
\draw (-3.5,1)-- (-3.5,2);
\draw (-3.52,1.5) -- (-3.48,1.5);
\draw (-3.5,2)-- (-2.5,2);
\draw (-2.94,2) -- (-2.97,1.96);
\draw (-2.94,2) -- (-2.97,2.04);
\draw (-3,2) -- (-3.03,1.96);
\draw (-3,2) -- (-3.03,2.04);
\draw (-2.5,2)-- (-2.5,1);
\draw (-2.48,1.5) -- (-2.52,1.5);
\draw (-2.5,1)-- (-2.5,0);
\draw (-2.47,0.51) -- (-2.53,0.51);
\draw (-2.47,0.49) -- (-2.53,0.49);
\draw (-4.5,0)-- (-4.5,1);
\draw (-4.53,0.49) -- (-4.47,0.49);
\draw (-4.53,0.51) -- (-4.47,0.51);
\draw (-3.5,0)-- (-2.5,0);
\draw (-2.94,0) -- (-2.97,-0.04);
\draw (-2.94,0) -- (-2.97,0.04);
\draw (-3,0) -- (-3.03,-0.04);
\draw (-3,0) -- (-3.03,0.04);
\draw (-4.5,0)-- (-3.5,0);
\draw (-3.97,0) -- (-4,-0.04);
\draw (-3.97,0) -- (-4,0.04);
\draw [->] (-4.5,1) -- (-4.5,0);
\draw [->] (-2.5,1) -- (-2.5,0);
\begin{scriptsize}
\draw (-2.44,1.13) node[anchor=north west] {$\beta_2^{-1}$};
\draw (-3.55,-0.12) node[anchor=north west] {$\alpha_2$};
\fill [color=qqqqff] (-3.5,1) circle (1.5pt);
\fill [color=qqqqff] (-3.5,2) circle (1.5pt);
\fill [color=qqqqff] (-2.5,2) circle (1.5pt);
\fill [color=qqqqff] (-2.5,1) circle (1.5pt);
\fill [color=qqqqff] (-4.5,1) circle (1.5pt);
\draw[color=black] (-3.92,1.14) node {$\alpha_1$};
\fill [color=qqqqff] (-4.5,0) circle (1.5pt);
\fill [color=qqqqff] (-3.5,0) circle (1.5pt);
\fill [color=qqqqff] (-2.5,0) circle (1.5pt);
\fill [color=qqqqff] (-1.5,0) circle (1.5pt);
\fill [color=qqqqff] (-1.5,1) circle (1.5pt);
\fill [color=qqqqff] (-0.5,1) circle (1.5pt);
\fill [color=qqqqff] (-0.5,2) circle (1.5pt);
\fill [color=qqqqff] (0.5,2) circle (1.5pt);
\fill [color=qqqqff] (0.5,1) circle (1.5pt);
\draw[color=black] (0.72,1.15) node {$n$};
\fill [color=qqqqff] (0.5,0) circle (1.5pt);
\fill [color=qqqqff] (-0.5,0) circle (1.5pt);
\draw[color=black] (-0.39,-0.22) node {$n$};
\draw[color=black] (-3.32,1.55) node {$\beta_1$};
\end{scriptsize}
\end{tikzpicture}
\caption{The surface ${{S}}$ and its ``triangulation'' ${{T}}_n$ for $n=4$; see Section~\ref{ssec-num} for details. The paths $\alpha_1,\beta_1,\alpha_2,\beta_2\colon [0,1]\to {{S}}$ form a standard basis of the 1st homology (but not homotopy) group.}
\label{fig-l}
\end{figure}
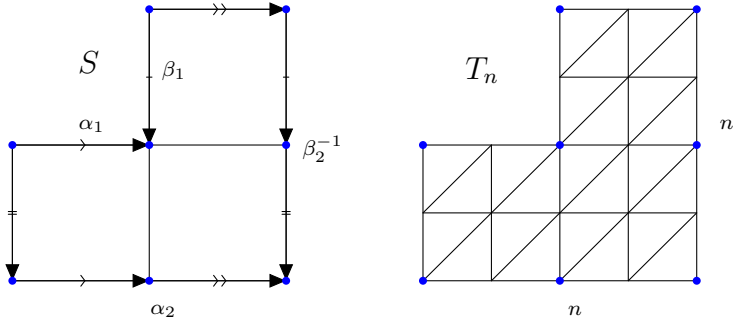

\begin{table}[htbp]
\mscomm{
\caption{Numerical experiments on approximation of period matrices by their discrete counterparts.}
\label{t-num}
\begin{center}
\begin{tabular}{|c|c|c|}
\hline
$n$  &  $\|\Pi_{{{T}}_n}-\Pi_{{{S}}}\|$ & $\|\Pi_{{{T}}_n}-\Pi_{{{S}}}\|\cdot h^{-2\gamma_S}$ \\
\hline
8     & 0.611 & 1.22 \\
16   & 0.363 & 1.15 \\
32   & 0.220 & 1.11 \\
64   & 0.136 & 1.08 \\
128 & 0.084 & 1.07 \\
256 & 0.053 & 1.06 \\
\hline
\end{tabular}
\end{center}
}
\end{table}


}

\subsection{Underwater reefs}

Let us warn the reader on possible dangers arising on the way of proving convergence results.

\begin{remark}\label{rem-error} The proof of convergence of discrete period matrices suggested in \cite[Proof of 3.1]{Mercat-02} is erroneous. It essentially relies on the assumption that the approximation of discrete analytic functions by discrete polynomials
constructed in \cite{Mercat-08} is \emph{uniform} while the approximation is in fact \emph{pointwise} and \emph{not} uniform by \cite[Section D]{Mercat-08}. 
\end{remark}


\subsection*{Acknowledgements}

The authors are grateful to D.~Chelkak, S. von Deylen, I.~Dynnikov, A.~Gaifullin, F.~G\"unther, S.~Lando, C.~Mercat, A.~Pakharev, M.~Wardetzky for useful discussions and to S.~Tikhomirov for writing a software for numerical experiments.



\bibliographystyle{plain}
\bibliography{A01}

\vskip 0.4cm
\noindent
\textsc{Alexander Bobenko\\
Technische Universit\"at Berlin\\
}

\vskip 0.3cm
\noindent
\textsc{Mikhail Skopenkov\\
Institute for information transmission problems, Russian Academy of Sciences, 
\\
and\\
King Abdullah University of Science and Technology} \\
\texttt{skopenkov@rambler.ru} \quad \url{http://skopenkov.ru}

\end{document}